\newtheorem{theorem}{Theorem}
\newtheorem{lemma}{Lemma}
\newtheorem{proposition}{Proposition}
\theoremstyle{definition}
\newtheorem{definition}{Definition}
\newcommand{\R}{\mathbb{R}}
\newcommand{\eps}{\varepsilon}
\DeclareDocumentCommand\Pr{ m g }{\ensuremath{
    {   \IfNoValueTF {#2}
      {\mathbb{P}\left[{#1}\right]}
      {\mathbb{P}\left[{#1}\middle\vert{#2}\right]}
    }
}}
\DeclareDocumentCommand\E{ m g }{\ensuremath{
    {   \IfNoValueTF {#2}
      {\mathbb{E}\left[{#1}\right]}
      {\mathbb{E}\left[{#1}\middle\vert{#2}\right]}%
    }
}}
\DeclareMathOperator*{\Var}{Var}
\newcommand{\ind}[1]{{\mathbbm{1}_{\left\{{#1}\right\}}}}
\def\dd{\mathrm{d}}
\def\ee{\mathrm{e}}
\def\B{\mathcal{B}}
\def\B{\mathcal{B}}
\title{{\bf \Large From Blackwell Dominance in Large Samples to R\'enyi Divergences and Back Again}\thanks{We are grateful to the co-editor and three referees for their comments and suggestions. In addition we would like to thank Kim Border, Laura Doval, Federico Echenique, Tobias Fritz, Drew Fudenberg, George Mailath, Massimo Marinacci, Margaret Meyer, Marco Ottaviani and Peter Norman S\o rensen for helpful discussions.}}
\author{ \large Xiaosheng Mu\thanks{Princeton University. Email: xmu@princeton.edu. Xiaosheng Mu acknowledges the hospitality of Columbia University and the Cowles Foundation at Yale University, which hosted him during parts of this research.} \ \ \ Luciano Pomatto\thanks{Caltech. Email: 	luciano@caltech.edu.} \ \ \ %
Philipp Strack\thanks{Yale University. Email:  philipp.strack@yale.edu.} \ \ \ %
Omer Tamuz\thanks{Caltech. Email: tamuz@caltech.edu. Omer Tamuz was supported by a grant from the Simons Foundation (\#419427), a Sloan research fellowship, and a BSF award (\#2018397).}}
\date{\today}
\begin{document}

\maketitle

\begin{abstract}
 We study repeated independent Blackwell experiments; standard examples include drawing multiple samples from a population, or performing a measurement in different locations. In the baseline setting of a binary state of nature, we compare experiments in terms of their informativeness in large samples. Addressing a question due to \cite{blackwell1951comparison}, we show that generically an experiment is more informative than another in large samples if and only if it has higher R\'enyi divergences.
 
 We apply our analysis to the problem of measuring the degree of dissimilarity between distributions  by means of divergences. A useful property of R\'enyi divergences is their additivity with respect to product distributions.
 Our characterization of Blackwell dominance in large samples implies that every additive divergence that satisfies the data processing inequality is an integral of R\'enyi divergences.

\end{abstract}



\section{Introduction}



Statistical experiments form a general framework for modeling information: Given a set $\Theta$ of parameters, an \textit{experiment} $P$ produces an observation distributed according to $P_\theta$, given the true parameter value $\theta \in \Theta$. Blackwell's\ celebrated theorem \citep{blackwell1951comparison} provides a partial order for comparing experiments in terms of their informativeness.

As is well known, requiring two experiments to be ranked in the  Blackwell order is a demanding condition. Consider the problem of testing a binary hypothesis $\theta \in \{0,1\}$, based on random samples drawn from one of two experiments $P$ or $Q$. According to Blackwell's ordering, $P$ is more informative than $Q$ if, for every test performed based on observations produced by $Q$, there exists another test based on $P$ that has lower probabilities of both Type-I and Type-II errors \citep{blackwell1979theory}. This is a difficult condition to satisfy, especially in the case where only one sample is produced by each experiment.

In many applications, an experiment does not consist of a single observation but of multiple i.i.d.\ samples. For example, a new vaccine is typically tested on multiple patients, and a randomized control trial assessing the effect of an intervention usually involves many subjects. We study a weakening of the Blackwell order that is appropriate for comparing experiments in terms of their large sample properties. Our starting point is the question, first posed by \cite{blackwell1951comparison}, of whether it is possible for $n$ independent observations from an experiment $P$ to be more informative than $n$ observations from another experiment $Q$, even though $P$ and $Q$ are not comparable in the Blackwell order. The question was answered in the affirmative by \cite{stein1951notes}, \cite{torgersen1970comparison} and \cite{azrieli2014comment}.\footnote{Even though \cite{stein1951notes} is frequently cited in the literature for a first example of this type, we could not gain access to that paper.} However, identifying the precise conditions under which this phenomenon occurs has remained an open problem.

We say that $P$ dominates $Q$ {\em in large samples} if for every $n$ large enough, $n$ independent observations from $P$ are more informative, in the Blackwell order, than $n$ independent observations from $Q$. We focus on a binary set of parameters $\Theta$, and show that generically $P$ dominates $Q$ in large samples if and only if the experiment $P$ has higher R\'enyi divergences than $Q$ (Theorem~\ref{thm:eventual}). R\'enyi divergences are a one-parameter family of measures of informativeness for experiments; introduced and characterized axiomatically in \cite{renyi1961measures}, we show that they capture the informativeness of an experiment in large samples. For any two experiments comparable in terms of R\'enyi divergences, we also provide a simple bound on the sample size that ensures that larger samples of independent experiments are comparable in the Blackwell order (Theorem~\ref{thm:quant}). 

The proof of this result crucially relies on two ingredients. First, we use techniques from large deviations theory to compare sums of i.i.d.\ random variables in terms of stochastic dominance. In addition, we provide and apply a new characterization of the Blackwell order: We associate to each experiment a new statistic, \textit{the perfected log-likelihood ratio}, and show that the comparison of these statistics in terms of first-order stochastic dominance is in fact equivalent to the Blackwell order.

\medskip

 We apply our characterization of Blackwell dominance in large samples to the problem of quantifying the extent to which two probability distributions are dissimilar. This is a common problem in econometrics and statistics, where formal measures quantifying the difference between distributions are referred to as \textit{divergences}.\footnote{See, e.g., \cite{sawa1978information,white1982maximum,critchley1996differential,kitamura1997information,hong2005asymptotic, ullah2002uses}. See \cite{kitamura2013robustness} for a recent application of $\alpha$-divergences, which are a reformulation of R\'enyi divergences.} Well known examples include total variation distance, the Hellinger distance, the Kullback-Leibler divergence, R\'enyi divergences, and more general $f$-divergences.
 
 R\'enyi divergences satisfy two key properties. The first is \textit{additivity}: R\'enyi divergences decompose into a sum when applied to pairs of product distributions. Additivity captures a principle of non-interaction across independent domains, as the total divergence of two unrelated pairs does not change when they are considered together as a bundle. Additivity is a natural property, and in applications it is a crucial simplification for studying i.i.d.\ processes. A second desirable property is described by the \textit{data-processing inequality}, which stipulates that the distributions of two random variables $X$ and $Y$ are at least as dissimilar as those of $f(X)$ and $f(Y)$, for any transformation $f$. As we show, this property is closely related to monotonicity with respect to the Blackwell order.
 
 Using our main result, we show that every additive divergence that satisfies the data-processing inequality and a mild finiteness condition is an integral (i.e., the limit of positive linear combinations) of R\'enyi divergences (Theorem~\ref{thm:divergences}). This result is an improvement over the original characterization of \cite{renyi1961measures}, as well as more modern ones \citep{csiszar2008axiomatic}, because it shows that additivity alone pins down a single class of divergences without making any further assumptions on the functional form.

\medskip

 The study most closely related to ours is \cite{moscarini2002eventual}. In their order, an experiment $P$ dominates another experiment $Q$ if for for every finite decision problem, a large enough sample of observations from an experiment $P$ will achieve higher expected payoff than a sample of the same size of observations from $Q$. In contrast to the order proposed by Blackwell and analyzed in this paper, their definition allows for the critical sample size to depend on the decision problem, and considers a restricted class of decision problems. We provide a detailed discussion of this and other related work in \S\ref{sec:blackwell_lit}.
 
 The paper is organized as follows. In \S\ref{sec:experiments} we provide our main definitions. \S\ref{sec:characterization} contains the characterization of Blackwell dominance in large samples, with proof deferred to \S\ref{sec:proof}. In \S\ref{sec:divergences} we characterize additive divergences. Finally, we further discuss our results and their relation to the literature in \S\ref{sec:blackwell_lit}. 

\section{Model} 
\label{sec:experiments}

\subsection{Statistical Experiments}

 A state of the world $\theta$ can take two possible values, $0$ or $1$. A \emph{Blackwell-Le Cam experiment} $P=(\Omega,P_0,P_1)$ consists of a sample space $\Omega$, which we assume to be a Polish space, and a pair of Borel probability measures $(P_0,P_1)$ defined over $\Omega$, with the interpretation that $P_\theta(A)$ is the probability of observing $A \subseteq \Omega$ in state $\theta \in \{0,1\}$. This framework is commonly encountered in simple hypothesis tests as well as in information economics. In \S\ref{sec:blackwell_lit} we discuss the case of experiments for more than two states: we obtain necessary conditions for dominance in large samples and explain the obstacles to a full characterization.

 Given two experiments $P=(\Omega,P_0,P_1)$ and $Q=(\Xi,Q_0,Q_1)$, we can form the {\em product experiment} $P \otimes Q$ given by 
 $$
    P \otimes Q = (\Omega \times \Xi,P_0 \times Q_0, P_1 \times Q_1).
 $$
 where $P_\theta \times Q_\theta$, given $\theta \in \{0,1\}$, denotes the product of the two measures. Under the experiment $P \otimes Q$ the realizations produced by both $P$ and $Q$ are observed, and the two observations are independent (conditional on the true state). For instance, if $P$ and $Q$ consist of drawing samples from two different populations, then $P \otimes Q$ consists of the joint experiment where a sample from each population is drawn. 
 We denote by
 $$
    P^{\otimes n} = P \otimes \cdots \otimes P
 $$
 the $n$-fold product experiment where $n$ independent observations are generated according to the experiment $P$.

 Consider now a Bayesian decision maker whose prior belief assigns probability $1/2$ to the state being $1$. To each experiment $P = (\Omega,P_0,P_1)$ we associate a Borel probability measure $\pi$ over $[0,1]$ that represents the distribution over posterior beliefs induced by the experiment. Formally, let $p(\omega)$ be the posterior belief that the state is $1$ given the realization $\omega \in \Omega$:
$$
  p(\omega) = \frac{\dd P_1(\omega)}{\dd P_1(\omega) + \dd P_0(\omega)}.
$$
Furthermore, define for every Borel set $B \subseteq [0,1]$
\[
    \pi_\theta(B) = P_\theta\left(\left\{\omega\,:\,p(\omega) \in B \right\} \right) 
\]
as the probability that the posterior belief will belong to $B$, given state $\theta$. We then define $\pi = (\pi_0+\pi_1)/2$ as the unconditional measure over posterior beliefs.

Throughout the paper we restrict our attention to experiments where the measures $P_0$ and $P_1$ are mutually absolutely continuous, so that no signal realization $\omega \in \Omega$ perfectly reveals either state. We say that $P$ is \emph{trivial} if $P_0=P_1$, and \textit{bounded} if the derivative $\dd P_1/\dd P_0$ is bounded above and bounded away from $0$.

\subsection{The Blackwell Order}

We first review the main concepts behind Blackwell's order over experiments \citep*{bohnenblust1949reconnaissance, blackwell1953equivalent}. Consider two experiments $P$ and $Q$ and their induced distribution over posterior beliefs denoted by $\pi$ and $\tau$, respectively. The experiment $P$ \emph{Blackwell dominates} $Q$, denoted $P \succeq Q$, if
\begin{equation}\label{eq:def-blackwell}
 \int_0^1 v(p)\,\dd \pi(p) \geq \int_0^1 v(p)\,\dd \tau(p)
\end{equation}
for every convex function $v \colon (0,1) \to \R$. Equivalently, $P \succeq Q$ if $\pi$ is a mean-preserving spread of $\tau$. We write $P \succ Q$ if $P \succeq Q$ and $Q \not\succeq P$. So, $P \succ Q$ if and only if \eqref{eq:def-blackwell} holds with a strict inequality whenever $v$ is strictly convex, i.e.\ $\pi$ is a mean-preserving spread of $\tau$ and $\pi \neq \tau$.

As is well known, each convex function $v$ can be seen as the indirect utility induced by some decision problem. That is, for each convex $v$ there exists a set of actions $A$ and a utility function $u$ defined on $A \times \{0,1\}$ such that $v(p)$ is the maximal expected payoff that a decision maker can obtain in such a decision problem given a belief $p$. Hence, $P \succeq Q$ if and only if in every decision problem, an agent can obtain a higher payoff by basing her action on the experiment $P$ rather than on $Q$.

Blackwell's theorem shows that the order $\succeq$ can be equivalently defined by ``garbling'' operations: Intuitively, $P \succeq Q$ if and only if the outcome of the experiment $Q$ can be generated from the experiment $P$ by compounding the latter with additional noise, without adding further information about the state.\footnote{Formally, given two experiments $P=(\Omega,P_0,P_1)$ and $Q=(\Xi,Q_0,Q_1)$, $P \succeq Q$ if and only if there is a measurable kernel (also known as ``garbling'') $\sigma: \Omega \to \Delta(\Xi)$, where $\Delta(\Xi)$ is the set of probability measures over $\Xi$, such that for every $\theta$ and every measurable $A \subseteq \Xi$, $
    Q_\theta(A) = \int \sigma(\omega)(A) \,\dd P_\theta(\omega).$
In other terms, there is a (perhaps randomly chosen) measurable map $f$ with the property that for both $\theta=0$ and $\theta=1$, if $X$ is a random quantity distributed according to $P_\theta$ then $Y=f(X)$ is distributed according to $Q_\theta$.} 

As discussed in the introduction, we are interested in understanding the large sample properties of the Blackwell order. This motivates the next definition. 

\begin{definition}[Large Sample Order]\label{def:blackwell-largesamples}
An experiment $P$ dominates an experiment $Q$ {\em in large samples} if there exists an $n_0 \in \mathbb{N}$ such that
\begin{equation}\label{eq:def-eventual-order}
P^{\otimes n} \succeq Q^{\otimes n} \text{~~for every~~} n \geq n_0.
\end{equation}
\end{definition}

 This order was first defined by \cite{azrieli2014comment} under the terminology of \textit{eventual sufficiency}. The definition captures the informal notion that a large sample drawn from $P$ is more informative than an equally large sample drawn from $Q$. Consider, for instance, the case of hypothesis testing. The experiment $P$ dominates $Q$ in the Blackwell order if and only if for every test based on $Q$ there exists a test based on $P$ that has weakly lower probabilities of both Type-I and Type-II errors. Definition \ref{def:blackwell-largesamples} extends this notion to large samples, in line with the standard paradigm of asymptotic statistics: $P$ dominates $Q$ if every test based on $n$ i.i.d.\ realizations of $Q$ is dominated by another test based on $n$ i.i.d.\ realizations of $P$, for sufficiently large $n$. When the two experiments are statistics of a common experiment, dominance in the large sample order implies that one statistic will eventually contain all the information captured by the other.

 As shown by \citet[Theorem 12]{blackwell1951comparison}, dominance of $P$ over $Q$ implies dominance of $P^{\otimes n}$ over $Q^{\otimes n}$, for every $n$. So dominance in large samples is an extension of the Blackwell order. This extension is strict, as shown by examples in \cite{torgersen1970comparison} and \cite{azrieli2014comment}.


\subsection{R\'enyi Divergence and the R\'enyi Order}

Our main result relates Blackwell dominance in large samples to a well-established notion of informativeness due to \cite{renyi1961measures}. Given two probability measures $\mu,\nu$ on a measurable space $\Omega$ and a parameter $t>0$, the R\'enyi $t$-divergence is given by
\begin{equation}\label{eq:R_t}
    R_t(\mu \Vert \nu) = \frac{1}{t-1}\log\int_\Omega \left(\frac{\dd \mu}{\dd \nu}(\omega)\right)^{t-1}\,\dd \mu(\omega)
\end{equation}
when $t \neq 1$, and, ensuring continuity,
\begin{equation}\label{eq:R_1}
    R_1(\mu \Vert \nu) = \int_\Omega \log\left(\frac{\dd \mu \hfill}{\dd \nu}(\omega)\right) \,\dd \mu (\omega).
\end{equation}
Equivalently, $R_1(\mu\Vert\nu)$ is the Kullback-Leibler divergence between the measures $\mu$ and $\nu$. As $t$ increases, the value of $R_t$ increases and is continuous whenever it is finite. The limit value as $t \to \infty$, which we denote by $R_\infty(\mu\Vert\nu)$, is the essential maximum of $\log\left(\frac{\dd \mu}{\dd \nu}\right)$, the logarithm of the ratio between the two densities.


As a binary experiment precisely consists of a pair of probability measures, we can apply this definition straightforwardly to experiments.
Given an experiment $P = (\Omega, P_0, P_1)$, a state $\theta$, and parameter $t>0$, the R\'enyi $t$-divergence of $P$ under $\theta$ is 
\begin{equation}\label{eq:Renyi divergence}
    R_P^\theta(t) = R_t(P_\theta\Vert P_{1-\theta}).
\end{equation}

Intuitively, observing a sample realization for which the likelihood ratio $\dd P_\theta / \dd P_{1-\theta}$ is high constitutes evidence that favors state $\theta$ over $1-\theta$. For instance, in the case of $t=2$, a higher value of $R_P^\theta(2)$ describes an experiment that, in expectation, more strongly produces evidence in favor of the state $\theta$ when this is the correct state. Varying the parameter $t$ allows to consider different moments for the distribution of likelihood ratios.  R\'enyi divergences have found applications to statistics and information theory \citep{liese2006divergences,csiszar2008axiomatic}, machine learning \citep{poczos2012nonparametric, krishnamurthy2014nonparametric}, computer science \citep{fritz2017resource}, and quantum information \citep{horodecki2009quantum, jensen2019asymptotic}. The Hellinger transform \cite[][p.\ 39]{torgersen1991comparison}, another well known measure of informativeness, is a monotone transformation of the R\'enyi divergences of an experiment.

The two R\'enyi divergences $R_P^1$ and $R_P^0$ of an experiment are related by the identity
\begin{align}
    \label{eq:t-1-t}
  R^1_P(t) = \frac{t}{1-t}R^0_P(1-t).
\end{align}
Hence the values of $R^\theta_P(t)$ for $t \in [0,1/2]$ are determined by the values of $R^{1-\theta}_P(t)$ on the interval $[1/2,1]$. Thus, it suffices to consider values of $t$ in $[1/2,\infty]$.

\begin{definition}[R\'enyi Order] 
An experiment $P$ dominates an experiment $Q$ in the \textit{R\'enyi order} if it holds that for all $\theta \in \{0,1\}$ and all $t > 0$
\[
    R_P^\theta(t) > R_Q^\theta(t) \,.
\]
\end{definition}

The R\'enyi order is a extension of the (strict) Blackwell order. In the proof of Theorem~\ref{thm:eventual} below, we explicitly construct a one-parameter family of decision problems with the property that dominance in the R\'enyi order is equivalent to higher expected payoff with respect to each decision problem in this family. See \S\ref{sec:Renyi decision problems} for details.

A simple calculation shows that if $P = S \otimes T$ is the product of two experiments, then for every state $\theta$,
$$
R_P^\theta  = R_S^\theta + R_T^\theta \,.
$$
A key implication is that $P$ dominates $Q$ in the R\'enyi order if and only if the same relation holds for their $n$-th fold repetitions $P^{\otimes n}$ and $Q^{\otimes n}$, for any $n$. Hence, the R\'enyi order compares experiments in terms of properties that are unaffected by the number of samples. Because, in turn, the R\'enyi order extends the Blackwell order, it follows that dominance in the R\'enyi order is a necessary condition for dominance in large samples.

As a final remark on the definition of the R\'enyi order, it is important to require the comparison for both states $\theta = 0$ and $\theta = 1$, as there exist pairs of experiments $P$ and $Q$ such that $R_P^1(t) > R_Q^1(t)$ for every $t$, but $R_P^0(t) < R_Q^0(t)$ for some $t$.\footnote{A simple example involves the following pair of binary experiments: 
    \begin{equation*}
\begin{array}{c|c|c|}
 & \omega & \omega' \\
\hline
P_0 & 1/3 & 2/3  \\
\hline
P_1 & 2/3 & 1/3  \\
\hline
\end{array} 
\qquad \qquad \qquad
\begin{array}{c|c|c|}
 & \omega & \omega'\\
\hline
Q_0 & 6/9 & 3/9 \\
\hline
Q_1 & 8/9 & 1/9 \\
\hline
\end{array} 
\end{equation*} 
where the entries represent conditional probabilities. Direct computation shows that $R_P^1(t) > R_Q^1(t)$ for every $t > 0$, while $R_P^0(t) < R_Q^0(t)$ for $t > 2$. 
}

\section{Characterization of the Large Sample Order}\label{sec:characterization}
We say two bounded experiments $P$ and $Q$ form a {\em generic} pair if the essential maxima of the log-likelihood ratios $\log\frac{\dd P_1 \hfill}{\dd P_{0}}$ and $\log\frac{\dd Q_1 \hfill}{\dd Q_{0}}$ are different, and if their essential minima are also different. This holds, for example, if for each of the two experiments the set of signal realizations is finite, and there is no posterior beliefs that can be induced by both experiments.
\begin{theorem} \label{thm:eventual}
For a generic pair of bounded experiments $P$ and $Q$, the following are equivalent:
  \begin{enumerate}
      \item $P$ dominates $Q$ in large samples.
      \item $P$ dominates $Q$ in the R\'enyi order. 
  \end{enumerate}
\end{theorem}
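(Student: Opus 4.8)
The implication $(1)\Rightarrow(2)$ is the easier direction and follows from facts recorded before the statement. R\'enyi divergences are additive across products, $R^\theta_{P^{\otimes n}} = n\,R^\theta_P$, so the R\'enyi order is invariant under passing to i.i.d.\ copies; and it extends the strict Blackwell order. Thus if $R^\theta_P(t_0) < R^\theta_Q(t_0)$ for some state $\theta$ and some $t_0$, the member of the one-parameter family of decision problems constructed in \S\ref{sec:Renyi decision problems} corresponding to $t_0$ is strictly more valuable under $Q^{\otimes n}$ than under $P^{\otimes n}$ once $n$ is large, which is incompatible with $P^{\otimes n}\succeq Q^{\otimes n}$; hence large-sample dominance yields $R^\theta_P \ge R^\theta_Q$ pointwise. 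Promoting this to the strict inequalities defining the R\'enyi order requires a little more, but is routine given the finer comparisons developed for the converse. The substance of the theorem therefore lies in $(2)\Rightarrow(1)$, on which I concentrate.

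For the converse, the plan is to reduce Blackwell dominance of the $n$-fold products to a one-dimensional tail comparison and then control that comparison with large deviations. The first reduction exploits a feature special to posteriors: for every experiment the induced belief distribution has mean equal to the prior $1/2$, so $\pi^{(n)}$ and $\tau^{(n)}$ always share the same mean. Consequently $P^{\otimes n}\succeq Q^{\otimes n}$, i.e.\ the statement that $\pi^{(n)}$ is a mean-preserving spread of $\tau^{(n)}$, is equivalent to the one-sided dominance $\int_0^1 \max(p-c,0)\,\dd\pi^{(n)}(p) \ge \int_0^1 \max(p-c,0)\,\dd\tau^{(n)}(p)$ for every threshold $c\in(0,1)$. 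This is precisely the comparison that the perfected-log-likelihood-ratio statistic is designed to encode as first-order stochastic dominance: writing the posterior as $p=\bigl(1+e^{-S_n}\bigr)^{-1}$ with $S_n=\sum_{i\le n}\ell_i$ the sum of i.i.d.\ log-likelihood ratios $\ell=\log(\dd P_1/\dd P_0)$, the call-option integrals unfold into the probabilities $\mathbb{P}_\theta[S_n > a]$ under \emph{both} states $\theta\in\{0,1\}$. The presence of both states is exactly why the R\'enyi order is imposed for $\theta=0$ and $\theta=1$.

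The second step connects these tails to R\'enyi divergences through large deviations. A direct computation gives $\log \mathbb{E}_0[e^{t\ell}] = (t-1)\,R^1_P(t)$, and symmetrically for state $1$, so the cumulant generating function of $\ell$ under each state is a reparametrization of the R\'enyi divergences of $P$. By Cram\'er's theorem the tails $\mathbb{P}_\theta[S_n \ge nx]$ decay at the exponential rate given by the Legendre transform of this cumulant generating function. Since the Legendre transform is order-reversing, R\'enyi dominance of $P$ over $Q$ translates into $P$ having \emph{uniformly smaller} rate functions under both states, i.e.\ stochastically more dispersed posteriors, so that every relevant tail of $S_n^P$ is exponentially fatter than the corresponding tail of $S_n^Q$. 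Feeding this back through the reduction of the previous paragraph yields the call-option dominance at every threshold, and hence $P^{\otimes n}\succeq Q^{\otimes n}$, for all large $n$.

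The main obstacle is uniformity. Cram\'er's theorem delivers only the logarithmic rate at a fixed linear threshold $a=nx$, whereas Blackwell dominance requires the exact inequality $\int\max(p-c,0)\,\dd\pi^{(n)}\ge\int\max(p-c,0)\,\dd\tau^{(n)}$ to hold simultaneously for \emph{all} $c$ and for every large $n$. Three difficulties must be met. First, thresholds near the edges of the support correspond to the essential supremum and infimum of $\ell$, that is, to the value $R_\infty$; it is here that the genericity hypothesis---distinct essential extrema of the two log-likelihood ratios---is needed to guarantee a strict separation rather than a tie. Second, one must bridge the central $O(\sqrt n)$ regime governed by the central limit theorem and the large-deviation regime, so that no intermediate threshold reverses the comparison. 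Third, and most delicate, the logarithmic rate comparison must be upgraded to an exact inequality between the tail integrals: this is where the \emph{strictness} of the R\'enyi inequalities for every $t$---including the limit $t\to\infty$ controlling $R_\infty$---is essential, since a strictly smaller rate produces an exponential gap that eventually dominates the sub-exponential prefactors in the large-deviation estimates. Making this control precise and uniform across all thresholds, via refined large-deviation bounds, is the technical heart of the argument.
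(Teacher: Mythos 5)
Your skeleton coincides with the paper's: necessity via the decision problems of \S\ref{sec:Renyi decision problems} plus additivity of R\'enyi divergences, and sufficiency by reducing Blackwell dominance of the products to tail comparisons of i.i.d.\ sums of log-likelihood ratios (the perfected-log-likelihood-ratio device, i.e.\ Theorem~\ref{prop:reduction} and Lemma~\ref{lem:signs}), then controlling those tails with Cram\'er-type estimates, using genericity at the extreme thresholds and a Chebyshev/CLT bridge in the middle regime. However, there is a genuine error at the heart of your large-deviations step. You claim that, because the Legendre transform is order-reversing, R\'enyi dominance of $P$ over $Q$ gives $P$ ``uniformly smaller rate functions under both states,'' so that ``every relevant tail of $S_n^P$ is exponentially fatter.'' This is false. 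The cumulant generating functions are \emph{not} uniformly ordered: by \eqref{eq:KX-KYa}--\eqref{eq:KX-KYb}, $K_{X^\theta}(t) > K_{Y^\theta}(t)$ for $t>0$ but $K_{X^\theta}(t) < K_{Y^\theta}(t)$ for $-1<t<0$, the two functions coinciding (at value $0$) at $t=0$ and $t=-1$. Consequently the conjugates cross as well: as the paper's Lemma~\ref{lem:eta} records, $K^*_{Y^\theta}(a) > K^*_{X^\theta}(a)$ on $[\mathbb{E}[X^\theta],\max[Y^\theta]]$, but the inequality \emph{reverses}, $K^*_{Y^\theta}(a) < K^*_{X^\theta}(a)$, on $[0,\mathbb{E}[Y^\theta]]$.

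This sign flip is not a technicality you can defer; it is exactly what the argument needs. For thresholds $0 \le a < \mathbb{E}[Y^\theta]$, the required inequality $F_\theta^{*n}(na) \le G_\theta^{*n}(na)$ (see \eqref{eq:F1G1}) says that $P$'s \emph{lower} tail $\Pr{X^\theta_1+\cdots+X^\theta_n \le na}$ must be exponentially \emph{thinner} than $Q$'s, which requires $K^*_{X^\theta}(a) > K^*_{Y^\theta}(a)$ --- the opposite of your uniform claim. Had $P$ genuinely possessed uniformly smaller rate functions, its lower tail would be fatter there, first-order stochastic dominance of the perfected log-likelihood ratios would fail, and $P^{\otimes n}$ would \emph{not} Blackwell dominate $Q^{\otimes n}$. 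The conceptual slip is describing the conditional law of the log-likelihood-ratio sum as ``more dispersed'': dispersion (mean-preserving spread) is a property of the \emph{unconditional} posterior distribution, whereas conditional on each state the correct comparison is one-sided --- under the true state, $P$'s evidence for the truth is stochastically larger, which means a fatter upper tail \emph{and} a thinner lower tail. Repairing your sketch requires replacing the uniform rate comparison with precisely this two-sided statement (the paper's Lemma~\ref{lem:eta}, whose proof exploits where the supremum defining the conjugate is attained: at $t\ge 0$ for $a\ge\mathbb{E}[X^\theta]$, at $t\in(-1,0]$ for $a\in[0,\mathbb{E}[Y^\theta]]$), after which the four-case analysis goes through as in \S\ref{sec:sufficiency}. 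A secondary, lesser point: you assert rather than derive that the option-value comparisons ``unfold into'' tail probabilities; that equivalence is the content of Theorem~\ref{prop:reduction}, which requires a real argument since the unperfected distributions $F_1,G_1$ can never be ranked by first-order stochastic dominance.
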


That (ii) implies (i) means that for every two experiments $P$ and $Q$ that are ranked in the R\'enyi order, there exists a sample size $n$ such that $n$ or more independent samples of $P$ and $Q$ are ranked in the Blackwell order. The proof of the theorem also establishes an upper bound on $n$; however, as stating this bound requires several additional concepts we defer this result to Theorem \ref{thm:quant} in \S\ref{sec:upper bound}. The complete proof of Theorem~\ref{thm:eventual} appears in \S\ref{sec:proof} below.

We mention that Theorem~\ref{thm:eventual} remains true so long as the dominated experiment $Q$ is bounded (whereas $P$ need not be bounded); see \S\ref{sec:unbounded} in the appendix for discussion of this and another generalization. On the other hand, the theorem does not remain true if we remove the genericity assumption. In \S\ref{sec:eventualFail} in the appendix we discuss the knife-edge case where the maxima or the minima of the log-likelihood ratios are equal. We demonstrate a non-generic pair of experiments $P$ and $Q$ such that $P$ dominates $Q$ in the R\'enyi order, but $P$ does not dominate $Q$ in large samples. Given this example, it seems difficult to obtain an applicable characterization of large sample dominance without imposing some genericity condition. 


\medskip
 A natural alternative definition of ``Blackwell dominance in large samples'' would require $P^{\otimes n} \succeq Q^{\otimes n}$ to hold for {\em some} $n$, but the resulting order is in fact equivalent under our genericity assumption. This is a consequence of Theorem~\ref{thm:eventual}, because $P^{\otimes n_0} \succeq Q^{\otimes n_0}$ for any $n_0$ implies $P$ dominates $Q$ in the R\'enyi order, which in turn implies $P^{\otimes n} \succeq Q^{\otimes n}$ for all large $n$.\footnote{However, it is not true that $P^{\otimes n_0} \succeq Q^{\otimes n_0}$ for some $n_0$ implies $P^{\otimes n} \succeq Q^{\otimes n}$ for all $n \geq n_0$. The case of $\alpha = 0.305$, $\beta = 0.1$ in Example 2 below provides an example where $P^{\otimes 2}$ Blackwell dominates $Q^{\otimes 2}$, but $P^{\otimes 3}$ does not dominate $Q^{\otimes 3}$.}

\subsection{Examples}

In this section we illustrate Theorem~\ref{thm:eventual} by means of two examples of pairs of experiments that are not Blackwell ranked, but are ranked in large samples.

\paragraph{Example 1.}

We first introduce a new example of two such experiments $P$ and $Q$. The first experiment $P$ appears in \cite{smith2000pathological}. The signal space is the interval $[0,1]$, and the measures $P_0$ and $P_1$ are absolutely continuous with densities $f_0(s) = 1$ and $f_1(s) = 1/2 + s$. 
Our second experiment $Q$ is binary, with signal space $\{0,1\}$. The measure $Q_0$ assigns probability $1/2$ to both signals, while the other measure is $Q_1(1)=p$ and $Q_1(0)=1-p$. 

For $p=0.625$, $P$ Blackwell dominates $Q$, as witnessed by the garbling from $[0,1]$ to $\{0,1\}$ that maps all signal realizations above $1/2$ to $1$ and all realizations below $1/2$ to $0$. For larger $p$, $P$ is no longer Blackwell dominant. To see this, consider the decision problem in which the prior belief is uniform, the set of actions is the set of states, and the payoff is one if the action matches the state and zero otherwise. It is easy to check that for $p > 0.625$, the experiment $Q$ yields a larger expected payoff.

Nevertheless, if we choose $p = 0.63$, then as Figure~\ref{fig:renyi} below suggests,  $P$ dominates $Q$ in the R\'enyi order even though the two experiments are not Blackwell ranked.\footnote{The R\'enyi divergences as defined in \eqref{eq:Renyi divergence} are computed to be
\[
  R_P^0(t) = \frac{1}{t-1}\log \left(\frac{(3/2)^{2-t} - (1/2)^{2-t}}{2-t}\right); \quad R_P^1(t) =   \frac{1}{t-1}\log \left(\frac{(3/2)^{t+1} - (1/2)^{t+1}}{t+1}\right)
\]
and
\[
    R_Q^0(t) = \frac{1}{t-1}\log\left(2^{-t}\cdot(p^{1-t}+(1-p)^{1-t})\right); \quad R_Q^1(t) = \frac{1}{t-1}\log \left(2^{t-1}\cdot(p^t+(1-p)^{t})\right).
\]}
\begin{figure}[h]
    \centering
    \includegraphics[scale=0.25]{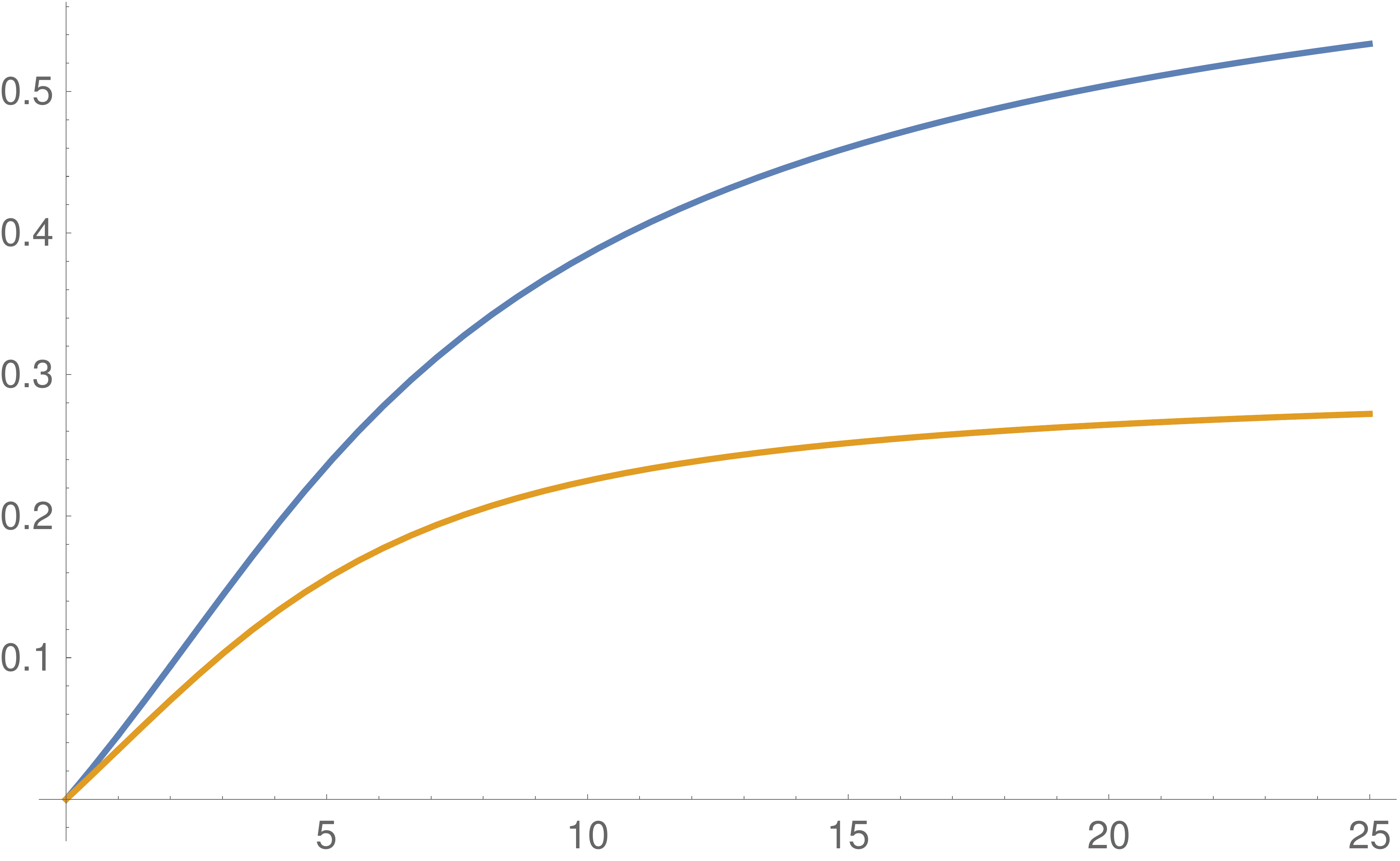}
    \caption{The R\'enyi divergences $R_P^0$ (blue), and $R_Q^0$ (orange) for $p=0.63$ in Example 1. The comparison between $R_P^1$  and $R_Q^1$ yields a similar graph.}
    \label{fig:renyi}
\end{figure}
Thus, by Theorem~\ref{thm:eventual}, there is some $n$ so that $n$ independent samples from $P$ Blackwell dominate $n$ independent samples from $Q$.



The next proposition generalizes the example, showing that a binary experiment $Q$ with the same properties can be constructed for (almost) any experiment $P$.
\begin{proposition}
\label{prop:example}
Let $P$ be a bounded experiment with induced distribution over posteriors $\pi$. Assume that the support of $\pi$ has cardinality at least $3$. Then there is a binary experiment $Q$ such that $P$ and $Q$ are not Blackwell ranked, and $P$ dominates $Q$ in large samples.
\end{proposition}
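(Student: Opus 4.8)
The plan is to use Theorem~\ref{thm:eventual} to reduce the statement to a problem purely about R\'enyi divergences. By the theorem, it suffices to construct a binary experiment $Q$ that (a) dominates $P$ in the R\'enyi order in neither direction failing, i.e.\ $P$ strictly dominates $Q$ in the R\'enyi order (so that $P$ dominates $Q$ in large samples), while (b) ensuring $P$ and $Q$ are \emph{not} Blackwell ranked. The key observation is that a binary experiment is determined by just two parameters (the two conditional probabilities of one signal), and the map $t \mapsto (R_Q^0(t), R_Q^1(t))$ is a smooth function of these parameters. The genericity hypothesis of Theorem~\ref{thm:eventual} also needs to be arranged; since we get to choose $Q$, we can pick its likelihood-ratio extremes to avoid the coincidences that the genericity condition forbids.

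First I would exploit the hypothesis that the support of $\pi$ has cardinality at least $3$. This is exactly what makes $P$ ``strictly richer'' than any binary experiment: the distribution over posteriors $\pi$ is supported on at least three points, whereas any binary $Q$ induces a $\tau$ supported on at most two points. I would use this to guarantee that $P$ and $Q$ cannot be Blackwell ranked in a way that contradicts what we want. Concretely, if $Q$ were chosen so that $P \succeq Q$ in the usual Blackwell order, the mean-preserving-spread characterization would constrain things; but the natural failure mode is that $Q$ puts enough mass ``far out'' on one side that $Q$ beats $P$ in some decision problem even though $P$ beats $Q$ overall in large samples. The template is Example~1: there, increasing $p$ past $0.625$ destroyed Blackwell dominance while the R\'enyi dominance persisted for a range of $p$.

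The heart of the construction is a continuity/comparison argument. I would start from a binary garbling of $P$: collapse the posterior distribution $\pi$ into a two-point distribution $\tau_0$ (for instance by splitting the support of $\pi$ at its mean and pooling each side), yielding a binary experiment $Q_0$ with $P \succeq Q_0$ strictly — strictness because $\pi$ is genuinely spread out relative to the two-point $\tau_0$ (here the cardinality-$3$ assumption ensures $\pi \neq \tau_0$). By strict Blackwell dominance, $P$ dominates $Q_0$ strictly in the R\'enyi order, so $R_P^\theta(t) > R_{Q_0}^\theta(t)$ for all $t$ and both $\theta$. Now I would perturb $Q_0$ within the two-parameter family of binary experiments, pushing one signal's likelihood ratio outward (as in raising $p$ in Example~1), to obtain $Q$. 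For a small enough perturbation the strict R\'enyi inequalities are preserved by continuity on compact $t$-ranges (with care at the endpoints $t\to 0$ and $t\to\infty$, where one controls the limits via the essential extrema of the log-likelihood ratios), so $P$ still strictly dominates $Q$ in the R\'enyi order. Simultaneously, the perturbation is chosen so that $Q$ now beats $P$ in the matching decision problem (the prior-$1/2$, match-the-state problem used in Example~1), which certifies $P \not\succeq Q$; and since $Q$ is a garbling-like simplification of $P$, we also have $Q \not\succeq P$, so the two are not Blackwell ranked.

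The main obstacle I anticipate is making the perturbation argument uniform across \emph{all} $t \in (0,\infty)$ and \emph{both} states simultaneously, rather than on a fixed compact interval. The R\'enyi divergences must be compared on the whole half-line, and the endpoint behavior as $t \to 0^+$, $t \to 1$ (the KL point), and $t \to \infty$ requires separate attention: $R^\theta(t)$ as $t\to\infty$ is governed by the essential supremum of the log-likelihood ratio, and as $t\to 0$ by the essential infimum. I would handle this by choosing the binary experiment $Q$ to have strictly smaller likelihood-ratio spread than $P$ at both extremes (which is possible precisely because $Q$ is a pooling of $P$), guaranteeing strict R\'enyi dominance at the endpoints, and then invoking uniform continuity on the compact complement. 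A secondary technical point is verifying the genericity condition from Theorem~\ref{thm:eventual} for the pair $(P,Q)$; since $Q$ is binary with freely chosen probabilities, I can perturb so that its log-likelihood-ratio extrema differ from those of $P$, placing the pair in the generic case and licensing the application of the theorem to conclude dominance in large samples.
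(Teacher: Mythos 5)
Your overall architecture is the same as the paper's: pool $P$ into a binary garbling $Q_0$, perturb it so that a kinked matching-type decision problem certifies $P \not\succeq Q$, and invoke Theorem~\ref{thm:eventual} after checking strict R\'enyi dominance and genericity. But there is a genuine gap at the critical step, namely your claim that a pooling of $P$ automatically ``has strictly smaller likelihood-ratio spread than $P$ at both extremes.'' This is false for the split you propose (at the mean of $\pi$, which is necessarily $1/2$), and the construction can break exactly there. Concretely, let $\pi$ be supported on $\{0.1,\,0.6,\,0.9\}$ with masses $2/5$, $4/15$, $1/3$, so that the mean is $1/2$. Splitting at the mean pools $\{0.6,0.9\}$ into one signal and leaves the atom at $0.1$ alone, so $Q_0$ has posteriors $\{0.1,\ 23/30\}$: its lower posterior coincides exactly with the essential minimum $p_1=0.1$ of $\pi$. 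Then (a) the pair $(P,Q_0)$ is not generic, and (b) the perturbation is stuck: pushing the low posterior outward places it below $p_1$, which makes $R^0_Q(t) > R^0_P(t)$ for large $t$ (the limit as $t \to \infty$ is the essential maximum of $\log(\dd Q_0/\dd Q_1)$), destroying R\'enyi dominance; while keeping the low posterior fixed at $p_1$ and pushing only the high one leaves the pair non-generic, so Theorem~\ref{thm:eventual} cannot be invoked at all --- and \S\ref{sec:eventualFail} of the paper shows this is not a removable technicality, since for non-generic pairs R\'enyi dominance genuinely fails to imply large-sample dominance.

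The fix is precisely where the cardinality-$3$ hypothesis must do real work (in your write-up it is only used to get $\pi \neq \tau_0$). The paper chooses the pooling threshold $p_2$ strictly between $p_1$ and $p_3$ with $\pi([p_1,p_2]) > \pi(\{p_1\})$ and $\pi([p_2,p_3]) > \pi(\{p_3\})$, splitting any mass at $p_2$ evenly; this is possible exactly because the support has a third point, and it forces \emph{both} pooled posteriors strictly inside $(p_1,p_3)$, which simultaneously yields genericity and room to perturb outward while preserving R\'enyi dominance at the $t\to\infty$ endpoints. Note the knock-on effect on your decision problem: once the threshold is some $p_2 \neq 1/2$, the symmetric match-the-state problem no longer works (pooling at $p_2$ is not payoff-neutral for a kink at $1/2$), and one must use the asymmetric matching problem with indirect utility $v(p)=\max\{(1-p)p_2,\ p(1-p_2)\}$, whose kink sits at $p_2$; then $Q_0$ does exactly as well as $P$, and the outward perturbation does strictly better, certifying $P \not\succeq Q$. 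A final small slip: after the perturbation $Q$ is no longer a garbling of $P$, so ``$Q \not\succeq P$ since $Q$ is a garbling-like simplification'' is not an argument; the correct (easy) reason is that $Q \succeq P$ would force $R^\theta_Q(t) \geq R^\theta_P(t)$ for all $t$ and $\theta$, contradicting the strict R\'enyi dominance you established.
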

The proof of this proposition crucially relies on Theorem~\ref{thm:eventual}.

\paragraph{Example 2 and a conjecture by \cite{azrieli2014comment}.}\label{sec:Azrieli}
We next apply Theorem~\ref{thm:eventual} to revisit an example due to \cite{azrieli2014comment} and to complete his analysis. The example provides a simple instance of two experiments that are not ranked in Blackwell order but become so in large samples. Despite its simplicity, the analysis of this example is not straightforward, as shown by \cite{azrieli2014comment}. We will show that applying the R\'enyi order greatly simplifies the analysis and elucidates the logic behind the example.

Consider the following two experiments $P$ and $Q$, parametrized by $\beta$ and $\alpha$, respectively. In each matrix, entries are the probabilities of observing each signal realization given the state $\theta$: 

\begin{equation*}
P: \quad
\begin{array}{cccc}
\hline
\hline
\theta & x_1 & x_2 & x_3\\
\hline
0 & \beta & \frac12 & \frac12 - \beta \\
1 & \frac12 - \beta & \frac12 & \beta \\
\hline
\end{array} 
\qquad \qquad \qquad
Q: \quad 
\begin{array}{ccc}
\hline
\hline
\theta & y_1 & y_2\\
\hline
0 & \alpha & 1-\alpha \\
1 & 1-\alpha & \alpha \\
\hline
\end{array} 
\end{equation*} 

\bigskip

The parameters satisfy $0 \leq \beta \leq 1/4$ and $0 \leq \alpha \leq 1/2$. The experiment $Q$ is a symmetric, binary experiment. The experiment $P$ with probability $1/2$ yields a completely uninformative signal realization $x_2$, and with probability $1/2$ yields an observation from another symmetric binary experiment. As shown by \citet[Claim 1]{azrieli2014comment}, the experiments $P$ and $Q$ are not ranked in the Blackwell order for parameter values $2\beta < \alpha < 1/4 + \beta$.

\cite{azrieli2014comment} points out that a necessary condition for $P$ to dominate $Q$ in large samples is that the R\'enyi divergences are ranked at $1/2$, that is $R_P^1(1/2) > R_Q^1(1/2)$.\footnote{As in his paper, this condition can be written in terms of the parameter values as
\begin{equation*}
\sqrt{\alpha(1-\alpha)} > \sqrt{\beta(\frac12 - \beta)} + \frac14.
\end{equation*} Thus, when $\alpha = 0.1$ and $\beta = 0$ for example, the experiment $P$ does not Blackwell dominate $Q$ but does dominate it in large samples, as shown by \cite{azrieli2014comment}.} In addition, he conjectures it is also a sufficient condition, and proves it in the special case of $\beta = 0$. We show that for the experiments in the example, the fact that the R\'enyi divergences are ranked at 1/2 is enough to imply dominance in the R\'enyi order, and therefore, by Theorem~\ref{thm:eventual}, dominance in large samples. This settles the above conjecture in the affirmative. 

\begin{proposition}\label{prop:AzrieliConjecture}
In this example, suppose $R_P^1(1/2) > R_Q^1(1/2)$. Then $R_P^1(t) > R_Q^1(t)$ for all $t > 0$ and by symmetry $R_P^0(t) > R_Q^0(t)$, hence $P$ dominates $Q$ in large samples.
\end{proposition}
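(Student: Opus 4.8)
The plan is to reduce the comparison of R\'enyi divergences to a one‑dimensional sign analysis of a difference of hyperbolic cosines. Setting $\phi_P(t)=(\tfrac12-\beta)^t\beta^{1-t}+\tfrac12+\beta^t(\tfrac12-\beta)^{1-t}$ and $\phi_Q(t)=(1-\alpha)^t\alpha^{1-t}+\alpha^t(1-\alpha)^{1-t}$, one has $R_P^1(t)=\tfrac{1}{t-1}\log\phi_P(t)$ and $R_Q^1(t)=\tfrac{1}{t-1}\log\phi_Q(t)$. Since $\phi_P(1)=\phi_Q(1)=1$ and the factor $1/(t-1)$ flips sign across $t=1$, the inequality $R_P^1(t)>R_Q^1(t)$ is equivalent to $\phi_P(t)<\phi_Q(t)$ on $(0,1)$ and to $\phi_P(t)>\phi_Q(t)$ on $(1,\infty)$, with the boundary case $t=1$ deferred to the end. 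First I would record that both $\phi_P$ and $\phi_Q$ are symmetric about $t=\tfrac12$ (this is the state‑swap symmetry of each experiment), so after the substitution $s=t-\tfrac12$ the difference becomes the even function
\[
 g(s)=\phi_P-\phi_Q = A\cosh(\lambda s)-B\cosh(\mu s)+\tfrac12,
\]
with $A=2\sqrt{\beta(\tfrac12-\beta)}$, $B=2\sqrt{\alpha(1-\alpha)}$, $\lambda=\log\tfrac{1/2-\beta}{\beta}$ and $\mu=\log\tfrac{1-\alpha}{\alpha}$. Using $\cosh(\lambda/2)=1/(2A)$ and $\cosh(\mu/2)=1/B$ one obtains the key boundary values $g(\pm\tfrac12)=0$, while $g(0)=A+\tfrac12-B$.

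Next I would convert the hypothesis and extract the ordering of the exponents. Evaluating at $t=\tfrac12$ shows that $R_P^1(\tfrac12)>R_Q^1(\tfrac12)$ is exactly $g(0)<0$, i.e.\ $B>A+\tfrac12$. Since $\beta(\tfrac12-\beta)\le\tfrac1{16}$ we have $A\le\tfrac12$, so $B>A+\tfrac12\ge 2A$; because $\cosh(\lambda/2)=1/(2A)$ and $\cosh(\mu/2)=1/B$, the bound $B>2A$ is equivalent to $\cosh(\lambda/2)>\cosh(\mu/2)$, hence to $\lambda>\mu$. In particular $g(s)\to+\infty$ as $s\to\infty$.

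The heart of the argument is to show that $g$ changes sign exactly once on $(0,\infty)$, namely at $s=\tfrac12$. For this I would prove the monotonicity lemma that $s\mapsto \sinh(\lambda s)/\sinh(\mu s)$ is strictly increasing on $(0,\infty)$ whenever $\lambda>\mu>0$; this reduces to $\tfrac{d}{dx}\bigl(x\coth(xs)\bigr)>0$, which in turn follows from $\sinh(2u)>2u$ for $u>0$. Granting this, the equation $g'(s)=A\lambda\sinh(\lambda s)-B\mu\sinh(\mu s)=0$ has at most one root on $(0,\infty)$; together with $g'(0)=0$ and $g'(s)\to+\infty$, this forces $g$ to be either strictly increasing or first decreasing then increasing on $(0,\infty)$. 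In the down‑then‑up case the minimizer lies below $\tfrac12$, for otherwise $g$ would be decreasing on $(0,\tfrac12)$ and $g(\tfrac12)<g(0)<0$ would contradict $g(\tfrac12)=0$. In either case $g$ has a single zero on $(0,\infty)$, necessarily $s=\tfrac12$, and $g(0)<0$ pins down the signs: $g<0$ on $(0,\tfrac12)$ and $g>0$ on $(\tfrac12,\infty)$. By evenness this gives $\phi_P<\phi_Q$ on $(0,1)$ and $\phi_P>\phi_Q$ on $(1,\infty)$, i.e.\ $R_P^1(t)>R_Q^1(t)$ for all $t\neq 1$.

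Finally I would dispatch $t=1$ and assemble the conclusion. The same sign analysis gives $g'(\tfrac12)>0$, so the zero at $\tfrac12$ is a transversal up‑crossing; since $R_P^1(1)-R_Q^1(1)=\phi_P'(1)-\phi_Q'(1)=g'(\tfrac12)$, and $\phi_P'(1),\phi_Q'(1)$ are precisely the Kullback--Leibler divergences, we obtain $R_P^1(1)>R_Q^1(1)$ as well. Thus $R_P^1(t)>R_Q^1(t)$ for every $t>0$; the state‑swap symmetry of both experiments yields $R_P^0=R_P^1$ and $R_Q^0=R_Q^1$, so the full R\'enyi order holds. As the experiments are bounded and the pair is generic (the essential maxima $\lambda\neq\mu$ and the essential minima $-\lambda\neq-\mu$ of the log‑likelihood ratios differ), dominance in large samples then follows from Theorem~\ref{thm:eventual}. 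I expect the single‑crossing step to be the main obstacle: ruling out any additional sign change of $g$ (i.e.\ that the two hyperbolic‑cosine bumps meet only at $s=\tfrac12$) is exactly what the $\sinh$‑ratio monotonicity lemma secures, and the accompanying care is to verify the strict up‑crossing at $\tfrac12$ so that the Kullback--Leibler comparison at $t=1$ is also strict.
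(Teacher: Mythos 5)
Your proposal is correct, but it takes a genuinely different route from the paper. The paper first reduces to the boundary case in which the hypothesis $R_P^1(1/2)=R_Q^1(1/2)$ holds with equality (justified, in a footnote, by the claim that the target inequalities only get easier as $\alpha$ increases on $[0,\tfrac12]$), and then argues by root counting: the difference $\Delta(r)=\phi_P(r)-\phi_Q(r)$ is a weighted sum of four exponentials plus a constant, so by Rolle's theorem and induction it has at most four zeros counting multiplicity; the known zeros at $r=0$, $r=1$, and the double zero at $r=\tfrac12$ (forced by the $r\mapsto 1-r$ symmetry) exhaust this budget, which pins down the sign of $\Delta$ on each of the four intervals and, via $\Delta'(1)>0$, the strict Kullback--Leibler comparison at $t=1$. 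You instead keep the strict hypothesis $g(0)<0$, pass to the hyperbolic form $g(s)=A\cosh(\lambda s)-B\cosh(\mu s)+\tfrac12$, and establish a shape (quasi-convexity) result: your $\sinh$-ratio monotonicity lemma shows $g'$ has at most one zero on $(0,\infty)$, so $g$ is decreasing-then-increasing there, and the boundary data $g(0)<0$, $g(\tfrac12)=0$, $g(\infty)=+\infty$ force the unique up-crossing at $s=\tfrac12$ with $g'(\tfrac12)>0$, which also delivers the strict KL inequality at $t=1$. What each approach buys: yours is self-contained in that it needs neither the monotonicity-in-$\alpha$ reduction (which the paper only asserts) nor the exponential-sum root bound, and it extracts $\lambda>\mu$ directly from the hypothesis via $B>A+\tfrac12\geq 2A$ rather than from the side condition $2\beta<\alpha$; the paper's argument, once the reduction is granted, is shorter and rests on a standard fact about exponential sums. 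Both proofs hinge on the same two structural features --- the symmetry of $\phi_P,\phi_Q$ about $t=\tfrac12$ and the sign flip of $1/(t-1)$ at $t=1$ --- and both correctly check genericity and boundedness before invoking Theorem~\ref{thm:eventual}.
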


\subsection{A Quantification of Blackwell Dominance in Large Samples}

The characterization in  Theorem~\ref{thm:eventual} makes it possible to quantify the extent to which one experiment Blackwell dominates another in large samples. We start with the observation that any two experiments, even if not ranked according to dominance in large samples, can be compared by applying different samples sizes. For example, suppose $P$ and $Q$ are not comparable, but $P^{\otimes 50}$ Blackwell dominates $Q^{\otimes 100}$. Then 50 samples from $P$ are more informative than 100 from $Q$, and thus, in an intuitive sense, $P$ is at least twice as informative as $Q$, for large enough samples.

Our formal definition is based on the fact that for any two bounded non-trivial experiments $P$ and $Q$, there exist positive integers $n,m$ such that $P^{\otimes n}$ Blackwell dominates $Q^{\otimes m}$. Reasoning as above, $P$ will be at least $m/n$ times as informative as $Q$ in large samples. We can then consider the largest ratio $m/n$ for which this comparison holds. This leads to a well defined measure of dominance, which we refer to as the {\em dominance ratio} $P/Q$ of $P$ with respect to $Q$:
\begin{align*}
    P/Q = \sup \left\{\frac{m}{n}\,:\,P^{\otimes n} \succeq Q^{\otimes m}\right\}.
\end{align*}
Thus, in large samples, each observation from $P$ contributes at least as much as $P/Q$ observations from $Q$. 

An immediate consequence of Theorem~\ref{thm:eventual} is the following characterization of $P/Q$ in terms of the R\'enyi divergences of the two experiments.

\begin{proposition}
\label{prop:index}
Let $P$ and $Q$ be non-trivial, bounded experiments. Then 
\begin{align*}
    P/Q = \inf_{\substack{\theta\in\{0,1\}\\t > 0}} \frac{R_P^\theta(t)}{R_Q^\theta(t)}.
\end{align*}
Furthermore, the dominance ratio $P/Q$ is always positive.\footnote{This characterization, together with Theorem~\ref{thm:eventual}, implies that the following natural alternative definition of $P/Q$ is equivalent:
\begin{align*}
    P/Q = \sup\Big\{a > 0 \,:\, P^{\otimes n} \succeq Q^{\otimes \lceil an \rceil } \text{~for \emph{all} $n$ large enough} \Big\}
\end{align*}
where $\lceil an \rceil$ denotes the smallest integer greater than or equal to $an$.}
\end{proposition}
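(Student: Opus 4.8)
The plan is to reduce the computation of $P/Q$ to the R\'enyi order via Theorem~\ref{thm:eventual}, after first recasting the definition in a form that Theorem~\ref{thm:eventual} can address. The immediate difficulty is that the definition of $P/Q$ involves products $P^{\otimes n} \succeq Q^{\otimes m}$ with \emph{unequal} exponents, whereas Theorem~\ref{thm:eventual} compares products with equal exponents. The key observation that bridges this gap is additivity of R\'enyi divergences: for any integers $n,m$ we have $R^\theta_{P^{\otimes n}}(t) = n\,R^\theta_P(t)$ and $R^\theta_{Q^{\otimes m}}(t) = m\,R^\theta_Q(t)$. Thus the R\'enyi order comparison between $P^{\otimes n}$ and $Q^{\otimes m}$ reduces to the scalar comparison $n\,R^\theta_P(t) > m\,R^\theta_Q(t)$ for all $\theta$ and $t$, i.e.\ to $\tfrac{m}{n} < \inf_{\theta,t} \tfrac{R^\theta_P(t)}{R^\theta_Q(t)}$.

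First I would establish that the infimum on the right-hand side is attained in a meaningful sense and is positive and finite, using boundedness and non-triviality of $P$ and $Q$: non-triviality guarantees $R^\theta_Q(t)>0$ so the ratios are well defined, and boundedness controls the behavior of $R^\theta_P(t)$ and $R^\theta_Q(t)$ as $t\to\infty$ (where both tend to the essential sup of the respective log-likelihood ratios, which are finite and positive), ensuring the ratio stays bounded away from $0$ and $\infty$; this yields the ``furthermore'' claim that $P/Q>0$. Denote the infimum by $\rho := \inf_{\theta,t} R^\theta_P(t)/R^\theta_Q(t)$.

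Next I would prove the two inequalities $P/Q \le \rho$ and $P/Q \ge \rho$. For $P/Q \le \rho$: if $P^{\otimes n} \succeq Q^{\otimes m}$, then since the Blackwell order implies the (weak) R\'enyi comparison, we get $n R^\theta_P(t) \ge m R^\theta_Q(t)$ for all $\theta,t$, hence $m/n \le \rho$; taking the supremum over all such $(n,m)$ gives $P/Q \le \rho$. For the reverse inequality $P/Q \ge \rho$, fix any rational $m/n < \rho$. Then $n R^\theta_P(t) > m R^\theta_Q(t)$ strictly for all $\theta$ and $t$, which is exactly the statement that the experiment $P^{\otimes n}$ dominates $Q^{\otimes m}$ in the R\'enyi order. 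I would then apply Theorem~\ref{thm:eventual} to the pair $(P^{\otimes n}, Q^{\otimes m})$ to conclude that $P^{\otimes n}$ dominates $Q^{\otimes m}$ in large samples, i.e.\ $(P^{\otimes n})^{\otimes k} \succeq (Q^{\otimes m})^{\otimes k}$ for all large $k$; that is $P^{\otimes nk} \succeq Q^{\otimes mk}$, a single valid witness in the supremum defining $P/Q$ with ratio $mk/nk = m/n$. Hence $P/Q \ge m/n$, and taking the supremum over rationals $m/n < \rho$ gives $P/Q \ge \rho$.

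The main obstacle is the application of Theorem~\ref{thm:eventual} in the reverse direction: the theorem requires the pair $(P^{\otimes n}, Q^{\otimes m})$ to be a \emph{generic} pair of bounded experiments, and genericity is a hypothesis on the essential extrema of log-likelihood ratios that need not hold for arbitrary products. I would handle this by noting that boundedness of $P$ and $Q$ is preserved under products, and by arguing that we may perturb $n,m$ (or the experiments) so as to enforce genericity without changing the relevant comparison; alternatively, I expect the cleaner route is to invoke the extension of Theorem~\ref{thm:eventual} mentioned in the text (namely that only the dominated experiment must be bounded, and the genericity assumption can be circumvented for the strict R\'enyi comparison at hand, since strict R\'enyi dominance already forces dominance in large samples once a gap exists uniformly in $t$). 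Verifying that the strict inequality $n R^\theta_P(t) > m R^\theta_Q(t)$ holds with a uniform gap as $t\to\infty$, which is what actually drives large-sample dominance past the knife-edge, is the technical crux and will rely once more on the boundedness of both experiments.
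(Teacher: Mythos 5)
Your overall architecture matches the paper's proof exactly: use additivity to reduce $P^{\otimes n}$ vs.\ $Q^{\otimes m}$ comparisons to the scalar inequalities $n R_P^\theta(t) \gtrless m R_Q^\theta(t)$, get the upper bound $P/Q \leq \rho$ from the fact that Blackwell dominance implies the weak R\'enyi ranking, and get the lower bound by applying Theorem~\ref{thm:eventual} to the pair $(P^{\otimes n}, Q^{\otimes m})$ for each rational $m/n < \rho$. However, you leave the genericity step --- which you yourself call ``the technical crux'' --- unresolved, and the two escape routes you sketch do not work as stated. Perturbing $n,m$ or the experiments is not a real argument (perturbing $n,m$ changes the ratio being certified, and perturbing the experiments would require relating $P/Q$ across perturbations). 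Invoking the extension where only the dominated experiment is bounded does not help either: Theorem~\ref{thm:bounded Q} \emph{still} assumes genericity, and the example in \S\ref{sec:eventualFail} shows that strict R\'enyi dominance at every $t$ genuinely fails to imply large-sample dominance when genericity is dropped, so there is no general ``circumvention'' to appeal to. The fix is short and is exactly the one-line observation in the paper: since $m/n < \rho \leq \lim_{t \to \infty} R_P^\theta(t)/R_Q^\theta(t) = \max[X^\theta]/\max[Y^\theta]$, one gets $n\max[X^\theta] > m\max[Y^\theta]$ for both $\theta$, i.e.\ the essential maxima (and, by the identity $\min[X^1] = -\max[X^0]$, the essential minima) of the log-likelihood ratios of $P^{\otimes n}$ and $Q^{\otimes m}$ strictly differ --- which \emph{is} genericity of that pair. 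Your closing remark about a ``uniform gap as $t \to \infty$'' gestures at this, but you never derive the gap from $m/n < \rho$, which is the whole point.

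A second, smaller gap is in the ``furthermore'' claim that $P/Q > 0$. You argue positivity only from the behavior as $t \to \infty$, but the delicate region is $t \to 0^+$, where \emph{both} $R_P^\theta(t)$ and $R_Q^\theta(t)$ tend to $0$ (for mutually absolutely continuous measures), so the ratio is $0/0$ there and nothing you wrote rules out the infimum collapsing. The paper handles this by the reflection identity $R_P^\theta(t)/R_Q^\theta(t) = R_P^{1-\theta}(1-t)/R_Q^{1-\theta}(1-t)$ for $t \in (0,1)$, which shows the infimum over $t > 0$ equals the infimum over $t \geq 1/2$; on the compact set $[1/2,\infty]$ the ratio is continuous and positive (using non-triviality and boundedness, with value $\max[X^\theta]/\max[Y^\theta]$ at $t = \infty$), so the infimum is positive by compactness. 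You need some version of this reduction to make the positivity claim go through.
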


As discussed, $P/Q$ can be interpreted as an asymptotic \emph{lower bound} on the information produced by one observation from $P$ relative to $Q$. On the other hand, we also have the asymptotic \emph{upper bound} $(Q/P)^{-1}$, where $Q/P$ is the dominance ratio of $Q$ with respect to $P$. We remark that the two bounds are in general (in fact, generically) not equal. However, Proposition~\ref{prop:index} shows that $P/Q \leq (Q/P)^{-1}$ always holds.



\subsection{The Blackwell Order in the Presence of Additional Information}
\label{sec:additional}

The large sample order compares the informativeness of repeated experiments. A related problem is to compare the informativeness of one-shot experiments when additional independent sources of information may be present.
 
Consider a decision maker choosing which of two experiments $P$ and $Q$ to conduct, \emph{on top of} an independent source of information $R$.  The resulting choice is between the compound experiments $P \otimes R$ and $Q \otimes R$. It is intuitive, and immediate from Blackwell's garbling characterization, that if $P$ dominates $Q$ in the Blackwell order, then the same relation must hold between the two compound experiments.

One might expect that if $P$ and $Q$ are incomparable, then no additional independent experiment $R$ can make the compound experiments comparable. Instead, we show that $P \otimes R$ can dominate $Q \otimes R$ even though the two original experiments $P$ and $Q$ were not comparable. Moreover, for generic experiments, this occurs precisely when $P$ has higher R\'enyi divergences than $Q$.
\begin{proposition}
\label{thm:marginal}
  Let $P$ and $Q$ be a generic pair of bounded experiments. Then the following are equivalent:
  \begin{enumerate}
      \item There exists a bounded experiment $R$ such that $P \otimes R \succeq Q \otimes R$.
      \item $P$ dominates $Q$ in the R\'enyi order.
  \end{enumerate}
\end{proposition}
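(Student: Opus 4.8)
The plan is to leverage Theorem~\ref{thm:eventual} to reduce both directions to statements about the R\'enyi order, exploiting the additivity of R\'enyi divergences under products. Recall that for any product experiment $S \otimes T$ and any state $\theta$, we have $R^\theta_{S \otimes T} = R^\theta_S + R^\theta_T$.

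First I would prove that (ii) implies (i). Suppose $P$ dominates $Q$ in the R\'enyi order, so that $R_P^\theta(t) > R_Q^\theta(t)$ for all $\theta$ and all $t>0$. By Theorem~\ref{thm:eventual}, $P$ dominates $Q$ in large samples, meaning $P^{\otimes n} \succeq Q^{\otimes n}$ for all $n \geq n_0$. The key observation is that $P^{\otimes n_0} = P \otimes P^{\otimes (n_0-1)}$, so taking $R = P^{\otimes(n_0 - 1)}$ (a bounded experiment, since $P$ is bounded) gives $P \otimes R = P^{\otimes n_0} \succeq Q^{\otimes n_0}$. However, this does not immediately yield $P \otimes R \succeq Q \otimes R$, since $Q^{\otimes n_0} \neq Q \otimes R$ when $P \neq Q$. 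To fix this, I would instead set $R = P^{\otimes k} \otimes Q^{\otimes \ell}$ for suitable $k, \ell$ and aim to show $P \otimes R \succeq Q \otimes R$ directly via the R\'enyi order applied to the compound experiments. Concretely, the cleaner route is: take $R = P^{\otimes(n_0-1)}$; then $P \otimes R = P^{\otimes n_0} \succeq Q^{\otimes n_0} = Q \otimes P^{\otimes(n_0-1)}$, and separately I want to compare this with $Q \otimes R = Q \otimes P^{\otimes(n_0-1)}$. Since these two last expressions coincide, we in fact obtain $P \otimes R \succeq Q \otimes R$ with $R = P^{\otimes(n_0-1)}$ immediately. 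This is the crux of the easy direction and requires only the bookkeeping identity $P^{\otimes n_0} = P \otimes P^{\otimes(n_0-1)}$ together with $Q \otimes P^{\otimes(n_0-1)} = P^{\otimes(n_0-1)} \otimes Q$ by commutativity of products.

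Next I would prove that (i) implies (ii). Suppose there exists a bounded $R$ with $P \otimes R \succeq Q \otimes R$. Since the Blackwell order implies the R\'enyi order in the weak sense (dominance $\succeq$ gives $R^\theta_{P \otimes R}(t) \geq R^\theta_{Q \otimes R}(t)$ for all $\theta, t$, because the R\'enyi order extends the Blackwell order and R\'enyi divergences are monotone under garbling via the data-processing-type inequality), additivity gives
\[
R_P^\theta(t) + R_R^\theta(t) \;\geq\; R_Q^\theta(t) + R_R^\theta(t)
\]
for every $\theta$ and every $t > 0$. Because $R$ is bounded and non-trivial considerations ensure $R_R^\theta(t)$ is finite, I can subtract it from both sides to conclude $R_P^\theta(t) \geq R_Q^\theta(t)$. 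The remaining task is to upgrade this weak inequality to the strict inequality required by the R\'enyi order. Here I would invoke the genericity hypothesis: for a generic pair, equality $R_P^\theta(t) = R_Q^\theta(t)$ at some $t$ would force a coincidence in the essential extrema of the log-likelihood ratios (since the asymptotics of $R^\theta(t)$ as $t \to \infty$ are governed by the essential maxima, and the identity~\eqref{eq:t-1-t} ties small-$t$ behavior to the essential minima), contradicting genericity. I would argue that the weak inequalities, combined with the analyticity of $t \mapsto R^\theta_P(t)$ and $t \mapsto R^\theta_Q(t)$ on the finite range and the boundary behavior, cannot hold with equality anywhere under the generic assumption without the experiments sharing an extremal log-likelihood value.

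The main obstacle will be the upgrade from weak to strict inequality in the second direction. The finiteness and subtraction of $R_R^\theta(t)$ is routine given boundedness of $R$, but ruling out tangency (weak inequality with equality at some interior $t$) genuinely requires the genericity assumption and a careful argument about the behavior of R\'enyi divergences at the endpoints $t \to 0^+$ and $t \to \infty$, where they are controlled by the essential minima and maxima of the log-likelihood ratios. I expect this step to mirror the corresponding argument in the proof of Theorem~\ref{thm:eventual}, and I would aim to quote or adapt that reasoning rather than reconstruct it from scratch.
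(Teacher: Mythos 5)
Your direction (ii) $\Rightarrow$ (i) contains a fatal error. The ``cleaner route'' rests on the identity $Q^{\otimes n_0} = Q \otimes P^{\otimes(n_0-1)}$, which is false whenever $P \neq Q$: of course $Q^{\otimes n_0} = Q \otimes Q^{\otimes(n_0-1)}$. You correctly flagged this very obstacle a few lines earlier (``this does not immediately yield $P \otimes R \succeq Q \otimes R$, since $Q^{\otimes n_0} \neq Q \otimes R$'') and then ``resolved'' it by asserting the false identity. Your fallback idea---take $R = P^{\otimes k} \otimes Q^{\otimes \ell}$ and verify $P \otimes R \succeq Q \otimes R$ ``via the R\'enyi order applied to the compound experiments''---is circular: the R\'enyi order between the compound experiments yields only \emph{large-sample} dominance (through Theorem~\ref{thm:eventual}), not the one-shot Blackwell dominance that statement (i) requires; if the R\'enyi order implied one-shot Blackwell dominance we would not need a catalyst at all. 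The paper's proof uses an ingredient entirely absent from your proposal: a catalytic construction (Lemma~\ref{lem:catalytic}, following \cite{duan2005multiple} and \cite{fritz2017resource}) in which $R$ is not a tensor power but the uniform \emph{mixture}
\[
  R = \tfrac{1}{n}\left(Q^{\otimes n}+P\otimes Q^{\otimes(n-1)}+P^{\otimes 2}\otimes Q^{\otimes(n-2)}+\cdots+P^{\otimes(n-1)}\otimes Q\right),
\]
where $n$ is such that $P^{\otimes n} \succeq Q^{\otimes n}$. Then $P \otimes R$ and $Q \otimes R$ are mixtures whose components coincide except in one place: $P \otimes R$ contains $P^{\otimes n} \otimes Q$ where $Q \otimes R$ contains $Q^{\otimes(n+1)}$, and $P^{\otimes n} \succeq Q^{\otimes n}$ ranks exactly this pair. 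This telescoping trick cannot be reproduced by any product-form catalyst, so the gap in your proof is structural, not cosmetic.

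Your direction (i) $\Rightarrow$ (ii) starts on the paper's track (monotonicity of R\'enyi divergences under Blackwell dominance, additivity, and subtraction of the finite term $R_R^\theta(t)$), but the upgrade from weak to strict inequality is not correct as stated: equality $R_P^\theta(t_0) = R_Q^\theta(t_0)$ at a single interior $t_0$ does not by itself force the essential extrema of the log-likelihood ratios to coincide, so your appeal to genericity does not close the argument. The clean route is different: genericity of $(P,Q)$ passes to the pair $(P \otimes R, Q \otimes R)$ because extrema of log-likelihood ratios add under products; hence $P \otimes R \succeq Q \otimes R$ cannot be a Blackwell equivalence (equivalence would force equal R\'enyi divergences and, letting $t \to \infty$, equal essential maxima), so it is \emph{strict} dominance. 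Strict dominance gives strict inequalities in the strictly convex decision problems of \S\ref{sec:Renyi decision problems}, i.e.\ $R^\theta_{P\otimes R}(t) > R^\theta_{Q\otimes R}(t)$ for all $t>0$ and both $\theta$, and only then does subtracting $R_R^\theta(t)$ deliver the strict R\'enyi order for $P$ and $Q$.
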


 Proposition~\ref{thm:marginal} suggests that in general, whether two experiments are Blackwell ordered depends on \emph{what} additional sources of information are available. We note that whenever an experiment $R$ makes $P$ dominant over $Q$ (when each is combined with $R$), then the same holds for any experiment $R'$ that is more informative than $R$. It is an interesting question for future work to fully characterize the set of experiments $R$ that make $P$ dominant. 
 
 Proposition \ref{thm:marginal} follows by combining the characterization in Theorem~\ref{thm:eventual} together with the observation that if $P$ dominates $Q$ in the large sample order, then there exists an $R$ such that $P \otimes R$ Blackwell dominates $Q \otimes R$. The latter fact is a consequence of an order-theoretic result from the quantum information literature \citep[][see Lemma~\ref{lem:catalytic} in the appendix]{duan2005multiple, fritz2017resource}.


\section{A Characterization of Additive Divergences}
\label{sec:divergences}

 In this section we apply the characterization of Blackwell dominance in large samples to study measures for quantifying the degree of dissimilarity between distributions, also known as \textit{divergences}. Examples of divergences include total variation distance, the Hellinger distance, the Kullback-Leibler divergence, R\'enyi divergences, and more general $f$-divergences.

 A key property of R\'enyi divergences is additivity. Consider two domains $\Omega_1$ and $\Omega_2$, a pair of measures $\mu_1,\nu_1$ defined on $\Omega_1$, and a pair of measures $\mu_2,\nu_2$ on $\Omega_2$. Additivity states that when the two domains are considered in conjunction, the divergence between the product measures $\mu_1 \times \mu_2$ and $\nu_1 \times \nu_2$, which are both defined on $\Omega_1 \times \Omega_2$, is the sum of the divergences of the two pairs. In words, this condition says that the total divergence of two unrelated pairs should not change when they are considered together as a bundle.
 
 Another property of R\'enyi divergences, which it in fact shares with all the above examples of divergences, is the {\em data processing inequality}, which captures the idea that discarding some information decreases dissimilarity. 
 
 We show that every additive divergence that satisfies the data-processing inequality is an integral of R\'enyi divergences. 
 The proof relies on the characterization of the large sample order together with functional analytic techniques. Since this result does not assume any functional form of the divergence, it improves over the existing characterizations such as in \cite{renyi1961measures} and \cite{csiszar2008axiomatic}.
 
 The result has potential applications for modeling experiments as economic commodities. In recent years, there has been growing interest in modeling the cost and pricing of information. By interpreting a divergence as a cost function over experiments, additivity reflects an assumption of constant marginal costs in information production \citep[an assumption discussed in detail in][]{pomatto2018cost}. By interpreting a divergence as a pricing function over experiments, additivity captures a notion of linearity, appropriate for pricing information in competitive markets.

\subsection{Additive Divergences}

 Given a Polish space $\Omega$, we denote by $\B(\Omega)$ its Borel $\sigma$-algebra and by $\Delta(\Omega)$ the collection of Borel probability measures on $\B(\Omega)$. Given another Polish space $\Xi$, a measurable function $f \colon \Omega \to \Xi$ and a probability measure $\mu \in \Delta(\Omega)$, we denote by $f_*(\mu)$ the push-forward probability measure in $\Delta(\Xi)$ defined as $[f_*(\mu)](E) = \mu(f^{-1}(E))$ for all $E \in \B(\Xi)$.

 Consider, for each $\Omega$, a map
 \[
   D_\Omega \colon \Delta(\Omega) \times \Delta(\Omega) \to \R_+ \cup \{+{\infty}\},
 \]
 and let $D = (D_\Omega)$ be the collection obtained by varying $\Omega$. We say $D$ is a {\em divergence} if $D_\Omega(\mu,\mu)=0$ for all $\Omega$ and all $\mu \in \Delta(\Omega)$.
 
 A divergence satisfies the {\em data processing inequality} if for any measurable $f \colon \Omega \to \Xi$ it holds that 
 $$
   D_{\Xi}(f_*(\mu),f_*(\nu)) \leq D_\Omega(\mu,\nu).
 $$ 
 The data processing inequality captures the idea that the distributions of two random variables $X$ and $Y$ are at least as dissimilar as those of $f(X)$ and $f(Y)$; applying a common deterministic mapping $f$ can only make the distributions more similar.\footnote{Note that the data processing inequality implies that $D$ is invariant to measurable isomorphisms: If $f$ is a bijection then $D_{\Xi}(f_*(\mu),f_*(\nu)) = D_\Omega(\mu,\nu)$. Thus the dissimilarity between measures does not depend on the particular labelling of the domain.} It is a natural concept in signal processing and information theory, and closely related to the Blackwell order over experiments. Indeed, we can see a pair of probability measures as an experiment $(P_0,P_1)$, and hence a divergence $D$ as a functional over experiments. The data-processing inequality states that the value of $D$ decreases when applying a deterministic garbling. 

 We say that the divergence $D$ is {\em additive} if 
 \begin{align*}
   D_{\Omega \times \Xi}(\mu_1 \times \mu_2, \nu_1 \times \nu_2) = D_{\Omega}(\mu_1,\nu_1) + D_\Xi(\mu_2,\nu_2).
 \end{align*} 
 We will henceforth drop the subscript from $D_\Omega(\mu,\nu)$, and write $D(\mu,\nu)$ whenever there is no risk of confusion.
 
 We call a pair $\mu,\nu$ of measures as {\em bounded} if there exists an $M > 0$ such that for any measurable $A \subseteq \Omega$, $\nu(A) \geq  \mu(A)/M$ and $\mu(A) \geq \nu(A)/M$. Equivalently, $\dd \mu/\dd\nu$ is supported on $[1/M,M]$, and hence bounded from above and bounded away from 0. We will restrict our attention to divergences that take finite values on bounded pairs of experiments.
 
\subsection{Representation Theorem}

Our representation theorem shows that all additive divergences that are finite on bounded experiments arise from linear combinations of R\'enyi divergences.
\begin{theorem}\label{thm:divergences}
Let $D$ be an additive divergence that satisfies the data processing inequality and is finite on bounded experiments. Then there exist two finite Borel measures $m_0,m_1$ on $[1/2,\infty]$ such that for every bounded pair $\mu,\nu$ it holds that
\begin{align}
\label{eq:D-rep}
   D(\mu,\nu) = \int_{[1/2,\infty]}R_t(\mu \Vert \nu)\,\dd m_0(t) + \int_{[1/2,\infty]}R_t(\nu \Vert \mu)\,\dd m_1(t),
\end{align}
with $R_t$ given by \eqref{eq:R_t} and \eqref{eq:R_1}.
\end{theorem}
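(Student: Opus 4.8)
The plan is to first convert the two structural hypotheses on $D$ into monotonicity with respect to the R\'enyi order, and then to realize $D$ as a positive linear functional on a space of continuous functions via the Riesz representation theorem. The first step is to show that additivity together with the data processing inequality forces $D$ to respect the full Blackwell order: if $P \succeq Q$ then $D(Q) \le D(P)$. The key observation is that a stochastic garbling can be realized by adjoining an independent, state-uninformative experiment $U$ (so that $U_0 = U_1$ is, say, Lebesgue measure) and then applying a single deterministic map $f$ that does not depend on $\theta$. Since $D(U_0,U_1) = D(U_0,U_0) = 0$, additivity gives $D(P \otimes U) = D(P)$, and the data processing inequality applied to $f$ yields $D(Q) = D(f_*(P\otimes U)) \le D(P\otimes U) = D(P)$.

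Next I would combine this with Theorem~\ref{thm:eventual}. If $P$ strictly dominates $Q$ in the R\'enyi order, then $P$ dominates $Q$ in large samples, so $P^{\otimes n}\succeq Q^{\otimes n}$ for all large $n$; Blackwell monotonicity then gives $nD(P)=D(P^{\otimes n})\ge D(Q^{\otimes n})=nD(Q)$, whence $D(P)\ge D(Q)$. Writing $\phi(P)=(R^0_P,R^1_P)$ for the (additive) R\'enyi profile, this says that $\phi(P)>\phi(Q)$ pointwise in both coordinates implies $D(P)\ge D(Q)$. A limiting argument---perturbing an experiment slightly to obtain strict domination and then using finiteness of $D$ on bounded experiments---upgrades this to weak domination and, applied in both directions, shows that $D(P)$ depends only on the profile $\phi(P)$. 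Thus $D$ factors as $D(P)=L(\phi(P))$ for a monotone, additive map $L$ on the cone of achievable profiles. Because of the identity $R^1_P(t)=\tfrac{t}{1-t}R^0_P(1-t)$, it suffices to record each coordinate on the compact interval $K=[1/2,\infty]$; for bounded experiments both coordinates are continuous on $K$ with finite values at $\infty$, so $\phi(P)\in C(K)^2$.

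The heart of the argument is to promote $L$ from an additive, monotone map on the cone $\phi(\mathcal{E})$ of profiles to a positive linear functional on $C(K)^2$. Additivity of $D$ and of $\phi$ over products gives $L(n\phi(P))=nL(\phi(P))$, and the dominance-ratio sandwiching behind Proposition~\ref{prop:index} (comparing $P^{\otimes n}$ with $Q^{\otimes m}$ for $m/n$ approaching a target ratio) extends this to positive homogeneity and shows that $L$ is well defined and linear on the real span $F$ of the profiles. Since a bounded non-trivial experiment has a profile bounded below by a positive constant on $K$, integer multiples of one such profile dominate any fixed element of $C(K)^2$ pointwise, so $F+C(K)^2_{+}=C(K)^2$; the M.\ Riesz extension theorem then extends $L$ to a positive linear functional on all of $C(K)^2$. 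Splitting this functional across the two coordinates and invoking the Riesz--Markov theorem produces finite positive Borel measures $m_0,m_1$ on $K$ with $L(g_0,g_1)=\int g_0\,\dd m_0+\int g_1\,\dd m_1$; evaluating at $\phi(P)$ yields exactly \eqref{eq:D-rep}. Finiteness of each $m_\theta$ follows by evaluating at an experiment whose profile is bounded below by a positive constant.

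I expect the main difficulty to lie in the middle step: showing that $L$ is genuinely linear (not merely additive) on the span of achievable profiles, and that this span is rich enough for the extension theorem to apply. Establishing positive homogeneity rests on the sandwiching argument, which in turn relies on \emph{strict} R\'enyi domination in Theorem~\ref{thm:eventual} and on a careful passage from strict to weak domination using finiteness of $D$; controlling this limit, together with the endpoint behavior at $t=1/2$ and $t=\infty$ and the redundancy encoded by the identity relating $R^0$ and $R^1$, is the delicate technical core. By contrast, the reduction to Blackwell and R\'enyi monotonicity at the start, and the Riesz--Markov representation at the end, are comparatively routine.
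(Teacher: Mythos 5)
Your proposal is correct and follows essentially the same route as the paper's own proof: Blackwell monotonicity from additivity and data processing via the product-with-Lebesgue realization of garblings, R\'enyi monotonicity via Theorem~\ref{thm:eventual} plus a tensoring/perturbation argument to pass from strict to weak dominance, factoring $D$ through the (additive, continuous) R\'enyi profile on a compact parameter space, extension of the resulting monotone additive functional to a positive linear functional by a Riesz--Kantorovich extension argument, and finally the Riesz--Markov representation, so the two proofs differ only cosmetically (you work in $C([1/2,\infty])^2$ where the paper glues the two divergences into one function on $[-\infty,\infty]$). The one point requiring care is your intermediate claim that strict R\'enyi dominance alone gives large-sample dominance: Theorem~\ref{thm:eventual} also needs genericity (strict separation at $t=\infty$), which is precisely what tensoring with a fixed nontrivial bounded experiment provides in the strict-to-weak step --- consistent with the ``endpoint behavior at $t=\infty$'' you flagged as the delicate part.
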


Varying the two measures $m_0$ and $m_1$ leads to some important special cases. When both are finitely supported, $D$ is a linear combination of R\'enyi divergences. Any additive divergence $D$ (finite on bounded experiments) is hence a limit of such combinations. When $m_0$ and $m_1$ are Dirac probability measures concentrated on $1$, $D$ reduces to twice the Jensen-Shannon divergence, which is the symmetric counterpart of the Kullback-Leibler divergence. When instead $m_0$ is a Dirac probability measure concentrated on $1$ and $m_1$ is set to have total mass zero, $D$ reduces to the Kullback-Leibler divergence.

Note that the lower integration bound in \eqref{eq:D-rep} is $1/2$. This is because, as discussed, the values of $R_t(\mu \Vert \nu)$ are related to the values of $R_{1-t}(\nu \Vert \mu)$. Hence it suffices to consider values of $t$ above $1/2$.

\paragraph{Proof Sketch of Theorem~\ref{thm:divergences}.} The first key idea is to see a bounded pair of probability measures as a bounded experiment $(P_0,P_1)$, and hence see a divergence $D$ as a functional over experiments. When $D$ is additive, the data processing inequality implies monotonicity with respect to the Blackwell order. 

The next crucial step is to leverage Theorem~\ref{thm:eventual} to show that additivity renders $D$ monotone in the R\'enyi order. Indeed, if $(P_0,P_1)$ dominates $(Q_0,Q_1)$ in the R\'enyi order, then, by Theorem~\ref{thm:eventual}, there exists a number $n$ of repetitions such that $(P^n_0,P^n_1)$ dominates $(Q^n_0,Q^n_1)$ in the Blackwell order. Hence, by combining Blackwell monotonicity and additivity, we obtain that $D$ must satisfy
\[
   nD(P_0,P_1) = D(P_0^n,P_1^n) \geq D(Q_0^n,Q_1^n) = nD(Q_0,Q_1).
\]
Hence, $D$ is monotone in the R\'enyi order. 

We deduce from this that $D$ is a monotone functional $F(R^0_P,R^1_P)$ of the R\'enyi divergences of the experiment. Additivity of $D$ implies $F$ is also additive. We then use tools from functional analysis to show that $F$ extends to a positive linear functional, leading to the integral representation of Theorem~\ref{thm:divergences}.

\section{Proof of Theorem~\ref{thm:eventual}}\label{sec:proof}

The proof of Theorem~\ref{thm:eventual} is organized as follows. In \S\ref{sec:Renyi decision problems} we first show that the R\'enyi order is necessary for the large sample order. The remaining subsections demonstrate sufficiency. In \S\ref{sec:reduction} we provide a novel characterization of Blackwell dominance, showing that it is equivalent to first-order stochastic dominance of appropriate statistics of the two experiments. \S\ref{sec:application-to-the-renyi-order} applies  this observation, together with techniques from large deviations theory. Omitted proofs are deferred to the appendix.

\subsection{Dominance in Large Samples Implies Dominance in the R\'enyi Order}\label{sec:Renyi decision problems}


As discussed above, the comparison of R\'enyi divergences between two experiments is independent of the number of samples. Thus it suffices to show that the R\'enyi order extends the strict Blackwell order.\footnote{Since by assumption the two experiments $P$ and $Q$ form a generic pair, Blackwell dominance of $P^{\otimes n}$ over $Q^{\otimes n}$ necessarily implies strict Blackwell dominance.} We do this by constructing decision problems with the property that higher expected payoff in these problems translates into higher R\'enyi divergences.

For each $t > 1$, the function $v_1(p) = 2p^t(1-p)^{1-t}$ defined for $p \in (0,1)$ is strictly convex, because its second derivative in $p$ is $2t(t-1)p^{t-2}(1-p)^{-1-t}$. Thus $v_1(p)$ is the indirect utility function induced by some decision problem. Moreover, we have that
\begin{equation}\label{eq:Renyi decision problem}
\int_0^1 v_1(p)\,\dd \pi(p) = \int_{\Omega} \left(\frac{\dd P_1(\omega)}{\dd P_0(\omega)}\right)^{t-1} \,\dd P_1(\omega) = \ee^{(t-1)R_P^1(t)}.
\end{equation}
To see this, recall that $\pi_\theta$ is the distribution over posteriors induced by $P$, conditional on state $\theta \in \{0,1\}$, and that 
\begin{equation}\label{eq:dpi and dpi1}
  \dd \pi(p) = \frac{1}{2} (\dd \pi_1(p) + \dd \pi_0(p))~~~~~ \text{and}~~~~~\dd \pi_1(p) = \frac{p}{1-p} \,\dd \pi_0(p).
\end{equation}
Thus $\dd \pi(p) = \frac{1}{2p} \,\dd \pi_1(p)$, which allows us to write
\[
\int_0^1 v_1(p)\,\dd \pi(p) = \int_0^1 2p^t(1-p)^{1-t} \cdot \frac{1}{2p} \,\dd \pi_1(p) = \int_0^1 \left(\frac{p}{1-p}\right)^{t-1} \,\dd \pi_1(p). 
\]
The first equality in \eqref{eq:Renyi decision problem} then follows from a change of variable from signal realizations $\omega$ to posterior beliefs $p = \frac{\dd P_1(\omega)}{\dd P_1(\omega) + \dd P_0(\omega)}$ (with the probability measure changing from $P_1$ to $\pi_1$, holding fixed the true state $\theta = 1$). 

The second equality in \eqref{eq:Renyi decision problem} follows from the definition of R\'enyi divergences. Thus \eqref{eq:Renyi decision problem} holds, which shows that in the decision problem with indirect utility function $v_1(p)$, the ex-ante expected payoff is a monotone transformation of the R\'enyi divergence $R_P^1(t)$. Hence, experiment $P$ yields higher expected payoff in this decision problem than $Q$ if and only if $R_P^1(t) > R_Q^1(t)$.

Similarly, for $t \in (0, 1)$ we consider the indirect utility function $v_2(p)=-2p^t(1-p)^{1-t}$, which is now strictly convex due to the negative sign (its second derivative is $2t(1-t)p^{t-2}(1-p)^{-1-t}$). Then
\[
\int_0^1 v_2(p)\,\dd \pi(p) = - \ee^{(t-1)R_P^1(t)}
\]
is again a monotone transformation of the R\'enyi divergence. So $P$ yields higher expected payoff in this decision problem only if $R_P^1(t) > R_Q^1(t)$. 

For $t = 1$, we consider the indirect utility function $v_3(p) = 2p \log(\frac{p}{1-p})$, which is strictly convex with a second derivative of $2p^{-1}(1-p)^{-2}$. We have
\[
\int_0^1 v_3(p)\,\dd \pi(p) = \int_0^1  \log\left(\frac{p}{1-p}\right) \,\dd \pi_1(p) = \int_{\Omega} \log\left(\frac{\dd P_1(\omega)}{\dd P_0(\omega)}\right) \,\dd P_1(\omega) = R_P^1(1).
\]
Thus $P$ yields higher expected payoff in this problem if and only if $R_P^1(1) > R_Q^1(1)$. 

Summarizing, the above family of decision problems shows that $P$ strictly Blackwell dominates $Q$ only if $R_P^1(t) > R_Q^1(t)$ for all $t > 0$. Since the two states are symmetric, another set of necessary conditions is that $R_P^0(t) > R_Q^0(t)$ for all $t > 0$. Hence dominance in the R\'enyi order is necessary for Blackwell dominance and (due to additivity of R\'enyi divergences) also for dominance in large samples. 

\subsection{Repeated Experiments and Log-Likelihood Ratios}\label{sec:LLR}

We turn to the proof that dominance in the R\'enyi order is (generically) sufficient for dominance in large samples. Recall that $P^{\otimes n}$ Blackwell dominates $Q^{\otimes n}$ if and only if the former induces a distribution over posterior beliefs that is a mean-preserving spread of the latter. However, the distribution over posteriors induced by a product experiment can be difficult to analyze directly. A more suitable approach consists in studying the distribution of the induced log-likelihood ratio
\[
    \log \frac{\dd P_\theta \hfill}{\dd P_{1-\theta}}.
\]
As is well known, given a repeated experiment $P^{\otimes n} = (\Omega^n, P^n_0, P^n_{1})$, its log-likelihood ratio satisfies, for every realization $\omega = (\omega_1,\ldots,\omega_n)$ in $\Omega^n$,
\[
    \log \frac {\dd P^n_1}{\dd P^n_{0}}(\omega) = \sum_{i=1}^n \log \frac{\dd P_1}{\dd P_{0}}(\omega_i).
\]
Moreover, the random variables
\[
    X_{i}(\omega) = \log \frac{\dd P_1}{\dd P_{0}}(\omega_i) \quad i=1,\ldots,n
\]
are i.i.d.\ under $P_\theta^{n}$, for $\theta \in \{0,1\}$. Focusing on the distributions of log-likelihood ratios will allow us to transform the study of repeated experiments to the study of sums of i.i.d.\ random variables. 
 

 
%
  

\subsection{From Blackwell Dominance to First-Order Stochastic Dominance}

\label{sec:reduction}
Expressing posterior beliefs in terms of log-likelihood ratios simplifies the analysis of repeated experiments. However, it is not obvious that the Blackwell order admits a simple interpretation in this domain. 

We provide a novel characterization of the Blackwell order, expressed in terms of the distributions of the log-likelihood ratios. Given two experiments $P = (\Omega,P_0,P_1)$ and $Q = (\Xi,Q_0,Q_1)$ we denote by $F_\theta$ and $G_\theta$, respectively, the cumulative distribution function of the log-likelihood ratios conditional on state $\theta$. That is,
\begin{equation}\label{eq:def-F}
  F_\theta(a) = P_\theta\left(\left\{ \log\frac{\dd P_\theta \hfill}{\dd P_{1-\theta}} \leq a\right\}\right) ~~~\text{for all}~~ a \in \R, ~\theta \in \{0,1\}.
\end{equation}
The c.d.f.\ $G_\theta$ is defined analogously using $Q_\theta$.

We associate to $P$ a new quantity, which we call the \textit{perfected log-likelihood ratio} of the experiment.  Define
\[
    \tilde{L}_1 = \log\frac{\dd P_1}{\dd P_0} - E
\]
where $E$ is a random variable that, under $P_1$, is independent from $\log\frac{\dd P_1}{\dd P_0}$ and distributed according to an exponential distribution with support $\R_+$ and cumulative distribution function $1 - \ee^{-x}$ for all $x \geq 0$. We denote by $\Tilde{F}_1$ the cumulative distribution function of $\tilde{L}_1$ under $P_1$. That is, $\Tilde{F}_1(a) = P_1(\{\tilde{L}_1 \leq a\})$ for all $a \in \R$.

More explicitly, $\Tilde{F}_1$ is the convolution of the distribution $F_1$ with the distribution of $-E$, and thus can be defined as
\begin{equation}\label{eq:def-TildeF}
    \Tilde{F}_1(a) = \int_\R P_1(\{-E \leq a-u\}) \,\dd F_1(u) = F_1(a) + \ee^a \int_{(a,\infty)} \ee^{-u} \,\dd F_1(u).
\end{equation}
The next result shows that the Blackwell order over experiments can be reduced to first-order stochastic dominance of the corresponding perfected log-likelihood ratios.


\begin{theorem}\label{prop:reduction}
Let $P$ and $Q$ be two experiments, and let $\Tilde{F}_1$ and $\Tilde{G}_1$, respectively, be the associated distributions of perfected log-likelihood ratios. Then
\[
    P \succeq Q \text{\quad if and only if \quad} \tilde{F}_1(a) \leq \tilde{G}_1(a) \text{~for all~} a \in \R.
\]
\end{theorem}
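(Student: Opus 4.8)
The plan is to reduce Blackwell dominance to its mean-preserving-spread formulation and then transport that condition from the space of posteriors to the space of log-likelihood ratios, where it will collapse to first-order stochastic dominance of the perfected statistics. Since $\pi$ and $\tau$ are both probability measures on $[0,1]$ sharing the common mean $1/2$ (the expected posterior equals the prior), $P \succeq Q$ holds iff $\int_0^1 v\,\dd\pi \ge \int_0^1 v\,\dd\tau$ for every convex $v$, and this is in turn equivalent to the same inequality restricted to the ``call option'' functions $v_c(p) = (p-c)^+$, $c \in (0,1)$. This is the classical characterization of the convex order: every convex $v$ is a nonnegative mixture of the $v_c$ plus an affine term, and the affine term integrates identically against $\pi$ and $\tau$ because the two measures share both mass and mean. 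Restricting to the bounded functions $v_c$ also keeps every integral finite, sidestepping integrability issues coming from unbounded $v$.

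The core of the argument is a change of variables. Writing $\ell = \log\frac{\dd P_1}{\dd P_0}$ for the log-likelihood ratio, the posterior is $p(\ell) = \ee^\ell/(1+\ee^\ell)$, and using the identities $\dd\pi = \tfrac{1}{2p}\,\dd\pi_1$ and $\pi_1 = p_* F_1$ implicit in \eqref{eq:dpi and dpi1}, I would rewrite $\int_0^1 v_c\,\dd\pi = \int_\R v_c(p(\ell))\,\tfrac{1+\ee^{-\ell}}{2}\,\dd F_1(\ell)$. Setting the threshold $c = p(b)$, the integrand vanishes for $\ell \le b$; on $\ell > b$ the weight $\tfrac{1+\ee^{-\ell}}{2}$ combines with $p(\ell)$ to give exactly $\tfrac12$, so the full integrand collapses to $\tfrac{1}{2(1+\ee^{b})}\,(1-\ee^{\,b-\ell})$, where $\tfrac{1}{2(1+\ee^{b})} = \tfrac{1-c}{2}$. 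Integrating in $\ell$ and matching against the explicit formula \eqref{eq:def-TildeF} for $\tilde F_1$ — precisely the identity $\ee^{b}\!\int_{(b,\infty)}\ee^{-\ell}\,\dd F_1 = \tilde F_1(b) - F_1(b)$ — would yield
\[
   \int_0^1 (p-c)^+\,\dd\pi = \frac{1-c}{2}\bigl(1-\tilde F_1(b)\bigr),
\]
and the analogous identity for $Q$ with $\tilde G_1$. The theorem is then immediate: the $v_c$-comparison holds for every $c \in (0,1)$ iff $\tilde F_1(b) \le \tilde G_1(b)$ for every $b \in \R$.

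I expect the main obstacle to be recognizing and verifying the exact cancellation in the change of variables — it is what retroactively explains why the perfected ratio is obtained by convolving with an \emph{exponential}: the term $\ee^{a}\int_{(a,\infty)}\ee^{-u}\,\dd F_1$ appearing in $\tilde F_1$ is exactly the residual produced by the call-option integral. A secondary point requiring care is the reduction to the functions $v_c$ when the experiments are not bounded, so that posteriors may accumulate near $0$ and $1$; here I would rely on the standard integrated-c.d.f.\ description of the mean-preserving-spread order together with the boundedness of the $v_c$ to keep every integral finite and the affine cancellation legitimate.
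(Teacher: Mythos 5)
Your proposal is correct and takes essentially the same route as the paper: the paper reduces Blackwell dominance to pointwise comparison of the lower integrated c.d.f.s $\Lambda_\pi(p)=\int_{[0,p]}(p-q)\,\dd\pi(q)$, performs the same change of measure $\dd\pi=\frac{1}{2p}\,\dd\pi_1$ and the same change of variables $a=\log\frac{p}{1-p}$, and arrives at $2\Lambda_\pi(p)=\frac{\ee^a-\tilde F_1(a)}{1+\ee^a}$, which it then matches against \eqref{eq:def-TildeF}. Since $\pi$ and $\tau$ have equal mass and mean, your call-option transform and the paper's put-option transform differ only by an affine function of the strike, so your identity $\int_0^1(p-c)^+\,\dd\pi=\frac{1-c}{2}\bigl(1-\tilde F_1(b)\bigr)$ is the exact mirror image of the paper's formula, and the two arguments are the same computation.
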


\begin{proof}
Let $\pi$ and $\tau$ be the distributions over posterior beliefs induced by $P$ and $Q$, respectively. As is well known, Blackwell dominance is equivalent to the requirement that $\pi$ is a mean-preserving spread of $\tau$. Equivalently
the functions defined as
\begin{equation}\label{eq:reduction1}
    \Lambda_\pi(p) = \int_{[0,p]} (p-q) \,\dd \pi(q) \text{\quad and \quad } \Lambda_\tau(p) = \int_{[0,p]} (p-q) \,\dd \tau(q)
\end{equation}
must satisfy $\Lambda_\pi(p) \geq \Lambda_\tau(p)$ for every $p \in (0,1)$.

We now express \eqref{eq:reduction1} in terms of the distributions of log-likelihood ratios $F_1$ and $G_1$. We have
\begin{equation}\label{eq:reduction2}
    \Lambda_\pi(p)  = p\left(1 - \int_{(p,1]} 1 \,\dd \pi(q)\right) - \int_{[0,p]} q \,\dd \pi(q).
\end{equation}
To transform the relevant integrals into those that condition on state $1$, we recall that \eqref{eq:dpi and dpi1} implies $\dd \pi(q) = \frac{1}{2q} \,\dd \pi_1(q)$. We then obtain from \eqref{eq:reduction2} that
\[
    2\Lambda_\pi(p)  = p\left(2 - \int_{(p,1]} \frac{1}{q} \,\dd \pi_1(q)\right) - \int_{[0,p]} \,\dd \pi_1(q).
\]
Next, we change variable from posterior beliefs to log-likelihood ratios. Letting $a = \log\frac{p}{1-p}$ and accordingly $u = \log\frac{q}{1-q}$, we have
\begin{equation}\label{eq:reduction3}
    2\Lambda_\pi(p) = \frac{\ee^a}{1+\ee^a}\left(2 - \int_{(a,\infty)} \frac{1+\ee^u}{\ee^{u}} \,\dd F_1(u)\right) - F_1(a).
\end{equation}
Since
\[
    \int_{(a,\infty)} \frac{1+\ee^u}{\ee^{u}} \,\dd F_1(u) = \int_{(a,\infty)} \ee^{-u}\,\dd F_1(u) + 1-F_1(a),
\]
\eqref{eq:reduction3} leads to
\[
    2\Lambda_\pi(p)  = \frac{\ee^a}{1+\ee^a}-\frac{F_1(a)}{1+\ee^a} - \frac{\ee^a}{1+\ee^a} \int_{(a,\infty)} \ee^{-u}\,\dd F_1(u) = \frac{\ee^a}{1+\ee^a} - \frac{\Tilde{F}_1(a)}{1+\ee^a},
\]
where the final equality follows from \eqref{eq:def-TildeF}. It then follows that $\Lambda_\pi(p) \geq \Lambda_\pi(p)$ if and only if $\tilde{F}_1(a) \leq \tilde{G}_1(a)$ for $a = \log\frac{p}{1-p}$. Requiring this for all $p \in (0,1)$ yields the theorem. 
\end{proof}

Intuitively, transferring probability mass from lower to higher values of $\log (\dd P_{\theta} / \dd P_{1-\theta})$ leads to an experiment that, conditional on the state being $\theta$, is more likely to shift the decision maker's beliefs towards the correct state. Hence, one might conjecture that Blackwell dominance of the experiments $P$ and $Q$ is related to stochastic dominance of the distributions $F_\theta$ and $G_\theta$. However, since the likelihood ratio $\dd P_1 / \dd P_0$ must satisfy the change of measure identity $\int \frac{\dd P_0}{\dd P_1} \,\dd P_1 = 1$, the distribution $F_1$ must satisfy
\[
    \int_{\R} \ee^{-u} \,\dd F_1(u) = 1.
\]
Because the function $\ee^{-u}$ is strictly decreasing and convex, and the same identity must hold for $G_1$, it is impossible for $F_1$ to stochastically dominate $G_1$. Theorem~\ref{prop:reduction} shows that a more useful comparison is between the perfected log-likelihood ratios.\footnote{It might appear puzzling that two distributions $F_1$ and $G_1$ that are not ranked by stochastic dominance become ranked after the addition of the same independent random variable. In a different context and under different assumptions, the same phenomenon is studied by \cite*{pomatto2018stochastic}.}

The next lemma simplifies the study of  perfected log-likelihood ratios, by showing that their first-order stochastic dominance can be deduced from comparisons of the original distributions $F_\theta$ and $G_\theta$ over subintervals.


\begin{lemma}\label{lem:signs}
Consider two experiments $P$ and $Q$. Let $F_\theta$ and $G_\theta$, respectively, be the distributions of the corresponding log-likelihood ratios, and $\tilde{F}_1$ and $\tilde{G}_1$ be the distributions of the  perfected log-likelihood ratios. The following holds:
\begin{enumerate}
    \item If $F_1(a) \leq G_1(a)$ for all $a \geq 0$, then $\Tilde{F}_1(a) \leq \Tilde{G}_1(a)$ for all $a \geq 0$.
    \item If $F_0(a) \leq G_0(a)$ for all $a \geq 0$, then  $\Tilde{F}_1(a) \leq \Tilde{G}_1(a)$ for all $a \leq 0$.
\end{enumerate}
\end{lemma}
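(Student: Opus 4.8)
The plan is to reduce both claims to two closed-form expressions for the distribution $\tilde F_1$ of the perfected log-likelihood ratio, from which the stated first-order comparisons follow at once. Write $L = \log\frac{\dd P_1 \hfill}{\dd P_0}$, so that $F_1$ is the law of $L$ under $P_1$ and $F_0$ is the law of $-L$ under $P_0$. The key point is that each representation will express $\tilde F_1(a)$ as an integral of a \emph{monotone} transform of $F_1$ (resp.\ $F_0$) over a half-line whose left endpoint is nonnegative precisely on the relevant range of $a$. Monotonicity of the integrand in the underlying c.d.f., together with the fact that the integration domain stays inside $[0,\infty)$, then converts the hypotheses $F_1 \le G_1$ and $F_0 \le G_0$ on $[0,\infty)$ directly into the two conclusions.

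For part (i) I would start from the explicit formula \eqref{eq:def-TildeF} and integrate by parts the Stieltjes integral $\int_{(a,\infty)} \ee^{-u}\,\dd F_1(u)$. The boundary term at $+\infty$ vanishes since $\ee^{-u}F_1(u)\to 0$, and the boundary term at $a$ cancels the leading $F_1(a)$, so the expression collapses to the reduced form
\[
  \tilde F_1(a) = \ee^{a}\int_a^\infty \ee^{-u}F_1(u)\,\dd u \qquad \text{for all } a \in \R .
\]
For $a \ge 0$ the integration domain $[a,\infty)$ lies in $[0,\infty)$, where by hypothesis $F_1 \le G_1$; since $\ee^{a-u}\ge 0$, integrating the inequality pointwise yields $\tilde F_1(a) \le \tilde G_1(a)$ for all $a \ge 0$, which is part (i).

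For part (ii) the subtlety is that the hypothesis constrains the state-$0$ distribution $F_0$ while the conclusion is about $\tilde F_1$, so I would first pass through the change of measure $\dd P_1 = \ee^{L}\,\dd P_0$. This turns the second term of \eqref{eq:def-TildeF} into a state-$0$ survival probability, $\int_{(a,\infty)} \ee^{-u}\,\dd F_1(u) = P_0(\{L > a\})$, while the leading term is $F_1(a) = \int_{\{L\le a\}} \ee^{L}\,\dd P_0$. Recalling that $F_0$ is the law of $-L$ under $P_0$, both terms re-express as integrals against $F_0$, giving the intermediate identity $\tilde F_1(a) = \int_{[-a,\infty)}\ee^{-w}\,\dd F_0(w) + \ee^{a}F_0((-a)^-)$ for $a\le 0$. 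A second integration by parts (whose boundary terms again telescope) then produces
\[
  \tilde F_1(a) = \int_{-a}^{\infty} \ee^{-w} F_0(w)\,\dd w \qquad \text{for } a \le 0 .
\]
For $a \le 0$ the lower limit $-a$ is nonnegative, so the integral only sees $F_0$ on $[0,\infty)$; the hypothesis $F_0 \le G_0$ there and positivity of $\ee^{-w}$ give $\tilde F_1(a) \le \tilde G_1(a)$ for all $a \le 0$, which is part (ii).

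I expect the main obstacle to be purely technical: carrying out the two Stieltjes integration-by-parts steps cleanly when $F_0$ and $F_1$ may carry atoms, since the experiments are not assumed atomless. One must track left- versus right-continuous values at the endpoint — it is exactly a possible atom at $-a$ that forces the boundary term in part (ii) to use the left limit $F_0((-a)^-)$ — but once the boundary contributions are matched the cancellations are exact. The entire conceptual content sits in the two displayed representations of $\tilde F_1$; everything else is bookkeeping and the one-line monotonicity arguments above.
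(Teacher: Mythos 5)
Your proposal is correct and follows essentially the same route as the paper: part (i) rests on the representation $\tilde F_1(a)=\ee^{a}\int_a^\infty \ee^{-u}F_1(u)\,\dd u$, which is the paper's formula $\tilde F_1(a)=\int_0^\infty F_1(a+u)\ee^{-u}\,\dd u$ after a change of variables, and part (ii) rests on the identity $\tilde F_1(a)=\int_{-a}^\infty \ee^{-v}F_0(v)\,\dd v$, which is exactly the paper's \eqref{eq:TildeF_alternative}, obtained via the same change of measure $\dd F_1(u)=-\ee^{u}\,\dd F_0(-u)$ and integration by parts, followed by the same pointwise monotonicity argument. Your explicit tracking of the left limit $F_0((-a)^-)$ at a possible atom is a point the paper glosses over, but the boundary cancellation is the same in both treatments.
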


\subsection{Large Deviations}


The main step in the proof of Theorem~\ref{thm:eventual} relies on the theory of large deviations. Large deviations theory studies low probability events, and in particular the odds with which an i.i.d.\ sum deviates from its expectation. The Law of Large Numbers implies that for a random variable $X$, the probability of the event $ \{X_1 + \cdots + X_n > na\}$ is low for $a > \mathbb{E}[X]$ and large $n$, where $X_1, \dots, X_n$ are i.i.d.\ copies of $X$. A crucial insight due to \cite{cramer1938nouveau} is that the order of magnitude of the probability of this event is determined by the {\em cumulant generating function} of $X$, defined as $$K_X(t) = \log\mathbb{E}[\ee^{t X}]$$ for every $t \in \R$.

As is well known, $K_X$ is strictly convex whenever $X$ is not a constant. We denote by
\begin{equation}\label{eq:fenchel}
    K_{X}^*(a) = \sup_{t \in \R} t\cdot a - K_{X}(t) ~~~a \in \R,
\end{equation}
its Fenchel conjugate. Two facts we will repeatedly apply are that for every $a \in (\min[X],\max[X])$ the problem \eqref{eq:fenchel} has a unique solution $t \in \R$, and such $t$ is non-negative if and only if $a \geq \mathbb{E}[X]$. Moreover, $K_{X}^* \geq 0 \cdot a - K_X(0) = 0$ is non-negative. 

Cram\'er's Theorem establishes that for each threshold $a > \mathbb{E}[X]$, the exponential rate at which the probability of the event $ \{X_1 + \cdots + X_n > na\}$ vanishes with $n$ is equal to the value $K_{X}^*(a)$ taken by the Fenchel conjugate at $a$. In this paper we are interested in comparing the probabilities of large deviations across different random variables. Consider, to this end, two random variables $X$ and $Y$ and a threshold $a$ strictly greater than $\mathbb{E}[X]$ and $\mathbb{E}[Y]$. If
\[
    K_Y^*(a) > K_X^*(a),
\]
then the probability of the event $ \{X_1 + \cdots + X_n > na\}$ vanishes more slowly than the probability of the event $ \{Y_1 + \cdots + Y_n > na\}$ . Thus there exists $n$ sufficiently large such that
\[
    \Pr{X_1 + \cdots + X_n > na} \geq \Pr{Y_1 + \cdots + Y_n > na}.
\]
The next proposition establishes a general version of this fact, while also providing a specific number of repetitions sufficient to rank the probability of the two events.

\begin{proposition}\label{prop:large-deviations-compare}
Let $X$ and $Y$ be random variables taking values in $[-b,b]$ and let $X_1, \dots, X_n$, $Y_1, \dots, Y_n$ be i.i.d.\ copies of $X$ and $Y$ respectively. Suppose $a \geq \mathbb{E}[Y]$, and $\eta > 0$ satisfies $K_Y^*(a)-\eta>K_X^*(a+\eta)$. Then for all 
$
  n \geq 4b^2(1+\eta)\eta^{-3},
$ 
it holds that
\begin{equation}\label{eq:large-deviation-compare}
    \Pr{X_1 + \cdots + X_n > na} \geq \Pr{Y_1 + \cdots + Y_n > na}.
\end{equation}
\end{proposition}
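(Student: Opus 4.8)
The plan is to sandwich the two tail probabilities between matching Chernoff-type estimates, and to exploit the slack in the hypothesis $K_Y^*(a)-\eta>K_X^*(a+\eta)$ to absorb a subexponential correction factor. Throughout write $S_n^X=X_1+\cdots+X_n$ and $S_n^Y=Y_1+\cdots+Y_n$. A few degenerate ranges can be cleared away at once: if $a\geq b$ then $S_n^Y\leq nb\leq na$ surely, so the right-hand side of \eqref{eq:large-deviation-compare} is $0$; if $a+\eta\leq\mathbb{E}[X]$ the law of large numbers forces $\Pr{S_n^X>na}\to 1$; and $a+\eta\geq b$ is incompatible with the hypothesis, since then $K_X^*(a+\eta)=+\infty$. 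So I would assume $\mathbb{E}[X]<a+\eta<b$, in which case the tilt parameter $t^*\geq 0$ defined by $K_X'(t^*)=a+\eta$ is finite and $K_X^*(a+\eta)=t^*(a+\eta)-K_X(t^*)$.

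First I would bound the $Y$-probability from above. Since $a\geq\mathbb{E}[Y]$, the standard Markov/Chernoff argument applied to $e^{sS_n^Y}$ at the optimal $s\geq 0$ gives
\[
  \Pr{Y_1+\cdots+Y_n>na}\;\leq\;e^{-nK_Y^*(a)}.
\]
Then I would bound the $X$-probability from below by exponential tilting. Let $\tilde P$ be the law with density $e^{t^*x-K_X(t^*)}$ with respect to the law of $X$; under $\tilde P$ the variable has mean exactly $a+\eta$ and, being supported in $[-b,b]$, variance at most $b^2$. Writing the change of measure and restricting to the window $W=\{na<S_n^X\leq n(a+\eta)\}$, on which $e^{-t^*S_n^X}\geq e^{-t^*n(a+\eta)}$ because $t^*\geq 0$ and $S_n^X\leq n(a+\eta)$, I obtain
\[
  \Pr{X_1+\cdots+X_n>na}\;\geq\;e^{nK_X(t^*)-t^*n(a+\eta)}\,\tilde P^{\otimes n}(W)\;=\;e^{-nK_X^*(a+\eta)}\,\tilde P^{\otimes n}(W).
\]
Combining the two bounds with the hypothesis $K_Y^*(a)>K_X^*(a+\eta)+\eta$, the desired inequality follows as soon as $\tilde P^{\otimes n}(W)\geq e^{-n\eta}$, because then $\Pr{S_n^X>na}\geq e^{-nK_X^*(a+\eta)-n\eta}\geq e^{-nK_Y^*(a)}\geq\Pr{S_n^Y>na}$.

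The essential point is the choice to cap the window at the tilted mean rather than use a symmetric window: a symmetric window would reintroduce a factor $e^{-t^*n\eta}$, and since $t^*\to\infty$ as $a+\eta$ approaches the essential supremum $b$, there is no uniform bound on $t^*$, so any such factor would prevent a threshold that depends only on $b$ and $\eta$. Capping at the mean removes $t^*$ from the exponent entirely, at the cost of having to lower bound $\tilde P^{\otimes n}(W)$.

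It remains to show $\tilde P^{\otimes n}(W)\geq e^{-n\eta}$, and this is where I expect the real work — and the exact threshold — to lie. The window $W$ is an interval of width $n\eta$ sitting just below the tilted mean $n(a+\eta)$. Its lower end is easy: one-sided Chebyshev (Cantelli), using mean $n(a+\eta)$ and variance at most $nb^2$, gives $\tilde P^{\otimes n}(S_n^X\leq na)\leq b^2/(n\eta^2)$. The delicate part is bounding from below the mass that $\tilde P^{\otimes n}$ places at or just below its own mean — that is, ruling out that nearly all the concentrated mass sits strictly above $n(a+\eta)$. This is the mean-versus-median subtlety for sums of i.i.d.\ bounded variables: it can fail for small $n$ but holds for large $n$, and quantifying it (for instance by combining the Cantelli tail bound with a second-moment lower bound on the mean absolute deviation of $S_n^X$) yields a lower bound on $\tilde P^{\otimes n}(W)$ of constant order. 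The explicit requirement $n\geq 4b^2(1+\eta)\eta^{-3}$ is then exactly what simultaneously makes the Chebyshev error terms small and $e^{-n\eta}$ small enough for this constant-order bound to dominate $e^{-n\eta}$. Everything else is routine Chernoff bookkeeping; the finite-$n$ control of $\tilde P^{\otimes n}(W)$ from below is the single nontrivial estimate.
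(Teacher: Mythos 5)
Your reduction of the problem to the single estimate $\tilde P^{\otimes n}(W)\geq \ee^{-n\eta}$ is valid, but that estimate --- the one you defer as ``the single nontrivial estimate'' --- is false in general, and it fails exactly at the mean-versus-median point you flag. Because you choose the tilt so that the tilted mean sits at the \emph{top} of the window $W=\{na<S_n^X\leq n(a+\eta)\}$, essentially all of the tilted mass can lie strictly above $W$. Concretely: take $b=1$, $X$ uniform on $\{-1,1\}$, $\eta=0.1$, $n=4400=\lceil 4b^2(1+\eta)\eta^{-3}\rceil$, and pick the tilt so that $\tilde P$ puts mass $1-\eps$ on $+1$ and $\eps$ on $-1$; this corresponds to $a+\eta=1-2\eps$, i.e.\ $a=0.9-2\eps$. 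Let $Y$ take values in $\{-1,1\}$ with $\mathbb{P}[Y=1]=0.01$; then $\mathbb{E}[Y]\leq a$, $K_Y^*(a)\approx 4.2$, and $K_X^*(a+\eta)\leq\log 2$, so the hypothesis $K_Y^*(a)-\eta>K_X^*(a+\eta)$ holds for every small $\eps>0$. But under $\tilde P^{\otimes n}$ the sum has an atom of mass $(1-\eps)^n$ at $n$, which lies strictly above the window top $n(1-2\eps)$, so $\tilde P^{\otimes n}(W)\leq 1-(1-\eps)^n\leq n\eps$. Since $\eps$ is a free parameter not controlled by $b$, $\eta$ or $n$, choosing $\eps=\ee^{-2000}$ gives $\tilde P^{\otimes n}(W)\leq 4400\,\ee^{-2000}\ll\ee^{-440}=\ee^{-n\eta}$. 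The same example kills any hope of a constant-order lower bound on the mass at or below the tilted mean: here the sum exceeds its mean with probability $(1-\eps)^n\approx 1$, and a Berry--Esseen correction cannot help because $\mathbb{E}\vert\hat X-\mathbb{E}[\hat X]\vert^3/\Var[\hat X]^{3/2}\approx\eps^{-1/2}\to\infty$ as $\eps\to 0$.

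The paper (Lemma~\ref{lem:large-deviation-lower-bound}) avoids this by tilting to the \emph{midpoint}: choose $t$ with $K_X'(t)=a+\eta/2$, so that the window $(na,n(a+\eta))$ is symmetric about the tilted mean and two-sided Chebyshev gives it probability at least $1-4b^2/(n\eta^2)$, uniformly over all admissible $X$. This is precisely the symmetric window you dismissed, and your reason for dismissing it is where the argument goes astray: the feared factor $\ee^{-t^*n\eta}$ from an unbounded tilt never appears, because the Fenchel conjugate is a supremum. With the midpoint tilt one bounds $\ee^{-tS_n}\geq\ee^{-tn(a+\eta)}$ on the window and then uses $t(a+\eta)-K_X(t)\leq K_X^*(a+\eta)$; the penalty $t\eta/2$ relative to $K_X^*(a+\eta/2)$ is absorbed by the gap between $K_X^*(a+\eta/2)$ and $K_X^*(a+\eta)$, and the hypothesis already charges you $K_X^*(a+\eta)$ rather than $K_X^*(a+\eta/2)$. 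In short, the tilt penalty must be paid inside the Legendre transform, where it is free, not in the window probability, where --- as the example shows --- it can be fatal. With that one change (tilt at $a+\eta/2$, symmetric window, conclude via $t(a+\eta)-K_X(t)\leq K_X^*(a+\eta)$), the rest of your argument, including the Chernoff bound for $Y$ and the bookkeeping giving the threshold $4b^2(1+\eta)\eta^{-3}$, goes through and coincides with the paper's proof.
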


The condition $K_Y^*(a)-\eta>K_X^*(a+\eta)$ ensures that the rate at which the probability of the events $\{Y_1 + \ldots + Y_n > na\}$ vanish with $n$ is larger by a factor of at least $\eta$ than the rate of the events $\{X_1 + \ldots + X_n > n(a+\eta)\}$. Larger values of $\eta$ make this condition more demanding, and imply that a smaller number of repetitions is sufficient to guarantee \eqref{eq:large-deviation-compare} to hold.

\subsection{Application to the R\'enyi Order}\label{sec:application-to-the-renyi-order}

Now consider two experiments $P = (\Omega,P_0,P_1)$ and $Q = (\Xi,Q_0,Q_1)$. Denote the corresponding log-likelihood ratios
\[
    X^\theta = \log \frac{\dd P_\theta \hfill}{\dd P_{1-\theta}} \text{\quad and \quad } Y^\theta = \log\frac{\dd Q_\theta \hfill}{\dd Q_{1 - \theta}}
\]
defined over the probability spaces $(\Omega,P_\theta)$ and $(\Xi,Q_\theta)$, respectively. Thus, for instance, $X^1$ is the log-likelihood ratio of state 1 to state 0, distributed conditional on state 1, and $X^0$ is the log-likelihood ratio of state 0 to 1, distributed conditional on state 0. 

The cumulant generating function of the log-likelihood ratio is a simple transformation of the R\'enyi divergences, as defined in \eqref{eq:R_t}, \eqref{eq:R_1} and \eqref{eq:Renyi divergence}:
\begin{equation}\label{eq:K-R}
    K_{X^{\theta}}(t) = t \cdot R_{P}^\theta(t+1).
\end{equation}
Likewise $K_{Y^{\theta}}(t) = t \cdot R_{Q}^\theta(t+1)$. Hence, if $P$ dominates $Q$ in the R\'enyi order then the following relation must hold between the cumulant generating functions:
\begin{align}
    K_{X^\theta}(t) > K_{Y^\theta}(t)~~~&\text{ for } t > 0 \label{eq:KX-KYa}\\
    K_{X^\theta}(t) < K_{Y^\theta}(t)~~~&\text{ for } -1 < t < 0. \label{eq:KX-KYb}
\end{align}
At $t = 0$ we have $K_{X^\theta}(0) = K_{Y^{\theta}}(0) = 0$, but $K_{X^\theta}'(0) > K_{Y^{\theta}}'(0)$ must hold by \eqref{eq:K-R} and the assumption that $R_{P}^\theta(1) > R_{Q}^{\theta}(1)$. It is well known that $K_{X^\theta}'(0) = \mathbb{E}[X^\theta]$, which by definition is the Kullback-Leibler divergence between $P^\theta$ and $P^{1-\theta}$. Hence we also have
\[
\mathbb{E}[X^\theta] > \mathbb{E}[Y^\theta] > 0.\footnote{Throughout the proof we assume $Q$ is a non-trivial experiment, so that $\mathbb{E}[Y^\theta]$ being the Kullback-Leibler divergence between $Q^\theta$ and $Q^{1-\theta}$ is strictly positive. This is without loss, as $P$ clearly dominates $Q$ (in large samples) in case $Q$ is trivial.}
\]

The Fenchel conjugate is an order-reversing operation: From \eqref{eq:fenchel} we see that if $K_X \geq K_Y$ pointwise, then the corresponding conjugates satisfy $K^*_Y \geq K^*_X$ pointwise. The relation between $K_{X^\theta}$ and $K_{Y^\theta}$ established in \eqref{eq:KX-KYa} and \eqref{eq:KX-KYb} is more complicated, and implies the following ranking of their conjugates:
\begin{align*}
    K_{Y^\theta}^*(a) > K_{X^\theta}^*(a)~~~&\text{ for }~~ \mathbb{E}[X^\theta] \leq a \leq \max[Y^\theta] \\
    K_{Y^\theta}^*(a) < K_{X^\theta}^*(a) ~~~&\text{ for }~~  0 \leq a \leq \mathbb{E}[Y^\theta]. 
\end{align*}
This is the content of the next lemma, which in addition shows that the differences between the Fenchel conjugates admit a uniform bound.

\begin{lemma}
\label{lem:eta}
Suppose $P$ and $Q$ are a generic pair of bounded experiments such that $P$ dominates $Q$ in the R\'enyi order. Let $(X^\theta)$ and $(Y^\theta)$ be the corresponding log-likelihood ratios. Then there exists $\eta \in (0,1)$ such that in both states $\theta \in \{0,1\}$
\begin{align*}
    K_{Y^\theta}^*(a) - \eta > K_{X^\theta}^*(a + \eta)~~~&\text{ for }~~ \mathbb{E}[X^\theta]-\eta \leq a \leq \max[Y^\theta] \\
   K_{Y^\theta}^*(a-\eta) < K_{X^\theta}^*(a)-\eta ~~~&\text{ for }~~  0 \leq a \leq \mathbb{E}[Y^\theta]+\eta. 
\end{align*}
\end{lemma}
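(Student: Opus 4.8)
The plan is to reduce the two $\eta$-uniform inequalities to two ingredients: (i) a pointwise \emph{strict} ordering of the Fenchel conjugates $K_{X^\theta}^*$ and $K_{Y^\theta}^*$ on the relevant closed intervals, and (ii) a uniformization of this strict ordering via compactness together with local Lipschitz bounds on the conjugates. Before either step I would record the consequences of the hypotheses. From \eqref{eq:K-R} and the R\'enyi order, the cumulant generating functions satisfy \eqref{eq:KX-KYa}--\eqref{eq:KX-KYb}, i.e.\ $K_{X^\theta}(t) > K_{Y^\theta}(t)$ for $t>0$ and $K_{X^\theta}(t) < K_{Y^\theta}(t)$ for $-1<t<0$, with $K_{X^\theta}(0)=K_{Y^\theta}(0)=0$ and $\E{X^\theta} > \E{Y^\theta} > 0$. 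Boundedness makes each $K$ finite, smooth and strictly convex on $\R$, so each conjugate is finite on the interior of its range with derivative equal to the unique optimizer $t^*(a)$ solving $K'(t)=a$. Finally, letting $t \to \infty$ in $R_P^\theta(t) > R_Q^\theta(t)$ and using that $R_\infty$ is the essential maximum of the log-likelihood ratio gives $\max[X^\theta] \ge \max[Y^\theta]$; genericity upgrades this to $\max[X^\theta] > \max[Y^\theta]$ in \emph{both} states (the two states correspond, respectively, to the ``maxima differ'' and ``minima differ'' clauses of genericity, via $\min[X^\theta] = -\max[X^{1-\theta}]$).

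For step (i), consider the first region. For $a$ in $[\E{X^\theta}, \max[Y^\theta]]$ the optimizer $t^*_{X^\theta}(a)$ is nonnegative. Plugging this (suboptimal) value into the conjugate of $Y^\theta$ via \eqref{eq:fenchel} gives $K_{Y^\theta}^*(a) - K_{X^\theta}^*(a) \ge K_{X^\theta}(t^*_{X^\theta}) - K_{Y^\theta}(t^*_{X^\theta})$, which is strictly positive whenever $t^*_{X^\theta} > 0$, i.e.\ for $a > \E{X^\theta}$. At the left endpoint $a=\E{X^\theta}$ the bound degenerates, so there I would instead use that a cumulant conjugate vanishes only at its own mean: $K_{X^\theta}^*(\E{X^\theta}) = 0 < K_{Y^\theta}^*(\E{X^\theta})$, the latter being positive since $\E{X^\theta} \neq \E{Y^\theta}$. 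This establishes $g_\theta := K_{Y^\theta}^* - K_{X^\theta}^* > 0$ on $[\E{X^\theta}, \max[Y^\theta]]$. The second region is symmetric: for $a \in [0,\E{Y^\theta}]$ both optimizers lie in $(-1,0)$, because $K'(-1) = -\E{X^{1-\theta}} < 0 < \E{X^\theta}=K'(0)$ and likewise for $Y^\theta$, and the same suboptimality estimate with the sign reversal \eqref{eq:KX-KYb} gives $-g_\theta > 0$ on $[0,\E{Y^\theta}]$, with the endpoint $a=\E{Y^\theta}$ again handled by strict positivity of $K_{X^\theta}^*$ off its mean.

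For step (ii), I would fix a small $\eta_0>0$ and slightly enlarged compact intervals $I_\theta := [\E{X^\theta}-\eta_0,\max[Y^\theta]]$ and $J_\theta := [0,\E{Y^\theta}+\eta_0]$ on which $g_\theta \ge \delta>0$ and $-g_\theta \ge \delta'>0$, respectively; these enlargements are legitimate because $g_\theta$ is continuous and strictly signed at the relevant endpoints, because $\max[X^\theta] > \max[Y^\theta]$ keeps $K_{X^\theta}^*$ finite on $[\,\cdot\,,\max[Y^\theta]+\eta_0]$, and because $\min[Y^\theta]<0$ (non-triviality) keeps $K_{Y^\theta}^*$ finite at $-\eta_0$. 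Writing $K_{Y^\theta}^*(a)-K_{X^\theta}^*(a+\eta) = g_\theta(a) - \big(K_{X^\theta}^*(a+\eta)-K_{X^\theta}^*(a)\big)$ and bounding the increment by $C\eta$, where $C$ is a uniform bound on $|t^*|$ over these intervals (hence a Lipschitz constant for the conjugates), yields a lower bound $\delta - C\eta$; similarly the second target is bounded below by $\delta' - C\eta$. Choosing $\eta < \min\{\eta_0,\ \delta/(C+1),\ \delta'/(C+1),\ 1\}$, uniformly over $\theta\in\{0,1\}$, makes both bounds exceed $\eta$, which is exactly the two displayed inequalities (the first holding vacuously when $\max[Y^\theta]<\E{X^\theta}-\eta$).

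The main obstacle is the behavior at the mean endpoints, where the Fenchel suboptimality estimate yields only a non-strict inequality; the fix is the structural fact that a cumulant conjugate is zero exactly at the mean and strictly positive elsewhere, invoked precisely at $a=\E{X^\theta}$ and $a=\E{Y^\theta}$. The second delicate point---and the only place genericity is essential---is ensuring there is room to shift the argument by $\eta$ near the top of the range: this needs $\max[X^\theta]>\max[Y^\theta]$ in both states so that $K_{X^\theta}^*$ remains finite, indeed Lipschitz, on $[\,\cdot\,,\max[Y^\theta]+\eta_0]$. Once these two points are secured, the uniformization is a routine compactness-plus-Lipschitz argument.
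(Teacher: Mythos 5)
Your proposal is correct and follows essentially the same route as the paper's proof: a pointwise strict comparison of the Fenchel conjugates obtained by plugging one variable's optimizer into the other's conjugate (using \eqref{eq:KX-KYa}--\eqref{eq:KX-KYb}), endpoint degeneracies at $\mathbb{E}[X^\theta]$ and $\mathbb{E}[Y^\theta]$ resolved by the fact that a cumulant conjugate vanishes only at its own mean, and then uniformization over slightly enlarged compact intervals. The only cosmetic difference is that you bound the increments $K^*(a+\eta)-K^*(a)$ by a Lipschitz constant coming from the bounded optimizer $t^*$, where the paper invokes uniform continuity of the conjugates on the enlarged compact interval.
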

These estimates will allow us to apply the previous Proposition~\ref{prop:large-deviations-compare} and make uniform comparisons of large deviation probabilities. In the range $a \in (\mathbb{E}[Y^\theta]+\eta, \mathbb{E}[X^\theta]-\eta)$ that is not covered by Lemma~\ref{lem:eta}, large deviation techniques are not necessary and it will be sufficient to apply more elementary estimates. 

\subsection{R\'enyi Order Implies Large Sample Order} \label{sec:sufficiency}

We now complete the proof of Theorem~\ref{thm:eventual} and show that if two experiments are ranked in the R\'enyi order then they are also ranked in the large sample order.
By Theorem~\ref{prop:reduction} we need to show that there exists a sample size $n_0$ such that for all $n \geq n_0$, the perfected log-likelihood ratios of $n$ independent draws from $P$ and $Q$ are ordered in terms of first-order stochastic dominance. 

More concretely, consider the log-likelihood ratios $X^\theta$ and $Y^\theta$ (for a single sample) as defined above, with distributions $F_\theta$ and $G_\theta$ conditional on state $\theta$. Let $F_\theta^{*n}$ be the $n$-th convolution power of $F_\theta$, which represents the distribution of log-likelihood ratios under the product experiment $P^{\otimes n}$; similarly define $G_\theta^{*n}$. By Lemma~\ref{lem:signs}, it suffices to show that for $n \geq n_0$ it holds that
\begin{equation} \label{eq:F1G1}
    F_1^{*n}(na) \leq G_1^{*n}(na)~~~\text{ for all } a \geq 0
\end{equation}
and
\begin{equation} \label{eq:F0G0}
    F_0^{*n}(na) \leq G_0^{*n}(na)~~~\text{ for all } a \geq 0.
\end{equation}

Below we show \eqref{eq:F1G1}; the argument for \eqref{eq:F0G0} is identical after relabelling the states. Assume that $X^1$ and $Y^1$ take values in $[-b,b]$. We will set $n_0 = 8b^2 \eta^{-3}$, where $\eta \in (0,1)$ is as given in Lemma~\ref{lem:eta}. For future use, we note that $\mathbb{E}[X^1] - \eta > \mathbb{E}[Y^1]$.\footnote{Otherwise, the first part of Lemma~\ref{lem:eta} would apply to $a = \mathbb{E}[Y^1]$, leading to $0-\eta > K^*_{X^1}(a+\theta)$. This is impossible as $K^*$ is non-negative.}

Let $X^1_1,\ldots,X^1_n$ be i.i.d.\ copies of $X^1$ and $Y^1_1,\ldots,Y^1_n$ be i.i.d.\ copies of $Y^1$. We can restate \eqref{eq:F1G1} as
\begin{equation} \label{eq:F1G1 restated}
\Pr{X^1_1 + \dots + X^1_n \leq na} \leq \Pr{Y^1_1 + \dots + Y^1_n \leq na}, ~~~\text{ for all } a \geq 0.
\end{equation}
To prove this, we divide into four ranges of values of $a$:

\paragraph{Case 1: $a \geq \max[Y^1]$.} In this case the right-hand side of \eqref{eq:F1G1 restated} is $1$, and hence the result follows trivially.

\paragraph{Case 2: $\mathbb{E}[X^1]-\eta \leq a < \max[Y^1]$.} 
From Lemma~\ref{lem:eta} we have that
\[
  K_{Y^1}^*(a) -\eta > K_{X^1}^*(a + \eta).
\]
As $a \geq \mathbb{E}[X^1]-\eta > \mathbb{E}[Y^1]$, we can directly apply Proposition~\ref{prop:large-deviations-compare} and conclude that \eqref{eq:F1G1 restated} holds for all $n \geq 4b^2(1+\eta)\eta^{-3}$. Since $\eta < 1$, it holds for all $n \geq n_0 = 8b^2\eta^{-3}$. 

\paragraph{Case 3: $\mathbb{E}[Y^1]+\eta \leq a < \mathbb{E}[X^1] - \eta$.}
By the Chebyshev inequality,
\[
    \Pr{X^1_1 + \cdots + X^1_n \leq na} \leq \Pr{X^1_1 + \cdots + X^1_n \leq n(\mathbb{E}[X^1]-\eta) } \leq \frac{\Var(X^1_1+\cdots+X^1_n)}{n^2\eta^2}.
\]
Since $\Var(X^1_1+\cdots+X^1_n)  =n\Var(X^1) \leq nb^2$, we have that
\[
    \Pr{X^1_1 + \cdots + X^1_n \leq na}  \leq \frac{b^2}{n\eta^2}.
\]
By a similar argument,
\[
    \Pr{Y^1_1 + \cdots + Y^1_n \leq na} \geq 1-\frac{b^2}{n\eta^2}.
\]
Hence for all $n \geq 2b^2\eta^{-2}$ we have
\[
    \Pr{X^1_1 + \cdots + X^1_n \leq na}  \leq \Pr{Y^1_1 + \cdots + Y^1_n \leq na}.
\]
As $n_0 = 8b^2\eta^{-3}$ is bigger, \eqref{eq:F1G1 restated} holds for $n \geq n_0$.

\paragraph{Case 4: $0 \leq a < \mathbb{E}[Y^1]+\eta$.}
By Lemma~\ref{lem:eta} we have that
\[
  K_{X^1}^*(a) -\eta > K_{Y^1}^*(a-\eta).
\]
For any random variable $Z$, we have $K_{-Z}(t) = \log \E{\ee^{t (-Z)}} = \log \E{\ee^{(-t) Z}} = K_{Z}(-t)$, and $K_{-Z}^*(a) = \sup_{t \in \R} t \cdot a -K_{-Z}(t) = \sup_{t \in \R} (-t) \cdot (-a) -K_{Z}(-t) = K_{Z}^*(-a)$. Therefore
\[
  K_{-X^1}^*(-a) -\eta > K_{-Y^1}^*(-a+\eta).
\]
We can now apply Proposition~\ref{prop:large-deviations-compare} to the random variables $-Y^1$ and $-X^1$, and the threshold $-a > -\mathbb{E}[Y^1]-\eta > \mathbb{E}[-X^1]$. This yields
\[
  \Pr{-Y^1_1 - \cdots - Y^1_n > -na} \geq \Pr{-X^1_1 - \cdots - X^1_n > -na}
\]
for all $n \geq 4b^2(1+\eta)\eta^{-3}$. Hence \eqref{eq:F1G1 restated} holds for $n \geq n_0$.\footnote{The comparison $\Pr{X^1_1 + \cdots + X^1_n < na} \leq \Pr{Y^1_1 + \cdots + Y^1_n < na}$ for all $a$ in this range implies the desired result $\Pr{X^1_1 + \cdots + X^1_n \leq na} \leq \Pr{Y^1_1 + \cdots + Y^1_n \leq na}$, by a standard limit argument.}

\bigskip

This proves \eqref{eq:F1G1 restated} for all $a \geq 0$ and completes the proof of Theorem~\ref{thm:eventual}.

\subsection{Number of Samples Required}\label{sec:upper bound}

The proof of Theorem~\ref{thm:eventual} establishes a stronger statement, and in fact provides an explicit bound on the number of repetitions sufficient to achieve large sample dominance. 

\begin{theorem}
  \label{thm:quant}
  Let $P$ and $Q$ be a generic pair of bounded experiments, with log-likelihood ratios taking values in $[-b,b]$. Assume $P$ dominates $Q$ in the R\'enyi order, and let $\eta \in (0,1)$ be provided by Lemma~\ref{lem:eta}. Then $P^{\otimes n}$ Blackwell dominates $Q^{\otimes n}$ for all $n \geq n_0 = 8b^2 \eta^{-3}$.
\end{theorem}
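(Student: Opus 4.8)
The plan is to observe that Theorem~\ref{thm:quant} is not a new statement requiring a separate proof, but rather a distillation of the bookkeeping already carried out in the sufficiency argument of \S\ref{sec:sufficiency}. Thus my approach is simply to extract the explicit constant from that proof and verify that nothing in the argument required $n$ to exceed $n_0 = 8b^2\eta^{-3}$. First I would recall the reduction chain: by Theorem~\ref{prop:reduction}, $P^{\otimes n} \succeq Q^{\otimes n}$ is equivalent to first-order stochastic dominance of the perfected log-likelihood ratios of the $n$-fold products; by Lemma~\ref{lem:signs}, this stochastic dominance follows once we establish the two one-sided comparisons \eqref{eq:F1G1} and \eqref{eq:F0G0} of the (ordinary) convolved distributions on the half-line $a \geq 0$. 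So the entire task reduces to bounding, uniformly over $a \geq 0$ and over the two states, the threshold $n$ beyond which these convolution inequalities hold.

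Next I would reproduce the four-case analysis for \eqref{eq:F1G1 restated}, checking in each case which lower bound on $n$ was actually used, and confirming that $8b^2\eta^{-3}$ dominates all of them. In Case~1 ($a \geq \max[Y^1]$) the inequality is trivial and imposes no constraint. In Cases~2 and~4, Lemma~\ref{lem:eta} supplies the hypothesis of Proposition~\ref{prop:large-deviations-compare}, which requires $n \geq 4b^2(1+\eta)\eta^{-3}$; since $\eta < 1$ gives $4b^2(1+\eta)\eta^{-3} < 8b^2\eta^{-3}$, the bound $n_0$ suffices. In Case~3 the elementary Chebyshev estimate requires only $n \geq 2b^2\eta^{-2}$, which is smaller still. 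Taking the maximum over the four cases and over the two symmetric states $\theta \in \{0,1\}$ yields the single uniform threshold $n_0 = 8b^2\eta^{-3}$, and by construction $P^{\otimes n} \succeq Q^{\otimes n}$ for all $n \geq n_0$.

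The one genuinely substantive input, which I would invoke rather than reprove, is Lemma~\ref{lem:eta}: it is what guarantees the existence of a single $\eta \in (0,1)$ working simultaneously in both states and providing the uniform-over-$a$ gap between the Fenchel conjugates. Given that $\eta$, the theorem is essentially a matter of assembling the constants. I expect the main obstacle to be purely expository rather than mathematical: the delicate point is that the constant $\eta$ is not free but is pinned down by Lemma~\ref{lem:eta}, so the bound $8b^2\eta^{-3}$ is only as explicit as that lemma makes $\eta$. I would therefore state the theorem as delivering the bound \emph{in terms of} the $\eta$ furnished by Lemma~\ref{lem:eta}, exactly as the statement does, and flag that obtaining a closed-form bound would require tracing the dependence of $\eta$ on the R\'enyi gaps of $P$ and $Q$ through the proof of that lemma. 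Since the preceding argument already verified the stochastic-dominance inequalities for every $n \geq n_0$, no further work is needed and the proof concludes.
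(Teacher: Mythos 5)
Your proposal is correct and takes essentially the same route as the paper: Theorem~\ref{thm:quant} is proved there exactly as you describe, by observing that the four-case argument of \S\ref{sec:sufficiency} (through Theorem~\ref{prop:reduction}, Lemma~\ref{lem:signs}, Lemma~\ref{lem:eta} and Proposition~\ref{prop:large-deviations-compare}) already establishes Blackwell dominance for every $n \geq n_0 = 8b^2\eta^{-3}$. Your constant-checking is precisely the paper's bookkeeping: Cases~2 and~4 need $n \geq 4b^2(1+\eta)\eta^{-3} \leq 8b^2\eta^{-3}$ since $\eta<1$, and Case~3 needs only $n \geq 2b^2\eta^{-2}$, which is smaller still.
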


The constant $n_0$ is decreasing in the parameter $\eta$. This fact follows from a logic analogous to the one behind Proposition~\ref{prop:large-deviations-compare}: Larger values of $\eta$ imply that the probability of unlikely, but very informative, signal realizations decreases at a much slower rate under the experiment $P^{\otimes n}$ than under $Q^{\otimes n}$, as the sample size $n$ becomes large. 

While simple, the constant $n_0$ is far from being tight. For example, our proof of Proposition \ref{prop:large-deviations-compare} uses the Chebyshev inequality, which may be improved by a suitable application of the Berry-Esseen Theorem, at the cost of a more complex bound. It remains an open problem to develop more precise estimates.

\section{Discussion and Related Literature}
\label{sec:blackwell_lit}

\paragraph{Comparison of Experiments.} Blackwell \citeyearpar[p.\ 101]{blackwell1951comparison} posed the question of whether dominance of two experiments is equivalent to dominance of their $n$-fold repetitions. \cite{stein1951notes} and \cite{torgersen1970comparison} provide early examples of two experiments that are not comparable in the Blackwell order, but are comparable in large samples.

\cite{moscarini2002eventual} propose an alternative criterion for comparing repeated experiments. According to their notion, an experiment $P$ dominates an experiment $Q$ if for every decision problem with finitely many actions, there exists some $n_0$ such that the expected payoff achievable from observing $P^{\otimes n}$ is higher than that from observing $Q^{\otimes n}$ whenever $n \geq n_0$. This order is characterized by the {\em efficiency index} of an experiment, defined, in our notation, as the minimum over $t \in (0,1)$ of the function $\ee^{(t-1)R_P^0(t)}$ (where a smaller index means a better experiment). There are two conceptual differences between the order studied in \citeauthor{moscarini2002eventual} and the large sample order that we characterize:
\begin{compactenum}
    \item While in \citeauthor{moscarini2002eventual} the number $n_0$ of repetitions is allowed to depend on the decision problem, dominance in large samples is a criterion for comparing experiments uniformly over decision problems, for fixed sample sizes. Thus the large sample order is conceptually closer to Blackwell dominance.\footnote{Recent work by \cite{hellmanLehrer} generalizes the Moscarini-Smith order to Markov (rather than i.i.d.) sequences of experiments.}
    \item The order proposed in \citeauthor{moscarini2002eventual} restricts attention to decision problems with \emph{finitely} many actions, while dominance in the large sample order implies that observing $P^{\otimes n}$ is better that observing $Q^{\otimes n}$ for every decision problem.
\end{compactenum}


Related to (ii), \cite{azrieli2014comment} shows that the Moscarini-Smith order is a strict extension of dominance in large samples. 
Perhaps surprisingly, this conclusion is reversed under a modification of their definition: It follows from our results that when extended to consider all decision problems, including problems with infinitely many actions, the Moscarini-Smith order over experiments (generically) coincides with the large sample order.\footnote{Consider the following variant of the Moscarini-Smith order: Say that $P$ dominates $Q$ if for \emph{every} decision problem (with possibly infinitely many actions) there exists an $n_0$ such that the expected payoff achievable from $P^{\otimes n}$ is higher than that from $Q^{\otimes n}$ whenever $n \geq n_0$. Each R\'enyi divergence $R^\theta_P(t)$ corresponds to the expected payoff in some decision problem (see \S\ref{sec:Renyi decision problems}), and for such decision problems the ranking over repeated experiments is independent of the sample size $n$. Thus $P$ dominates $Q$ in this order only if $P$ dominates $Q$ in the R\'enyi order. By Theorem~\ref{thm:eventual}, $P$ must then dominate $Q$ in large samples.}

Our notion of dominance in large samples is prior-free. In contrast, several authors \citep*{kelly1956entropy, lindley1956measure, cabrales2013entropy} have studied a complete ordering of experiments, indexed by the expected reduction of entropy from prior to posterior beliefs (i.e., mutual information between states and signals). We note that unlike Blackwell dominance, dominance in large samples does not guarantee a higher reduction of uncertainty given any prior belief.\footnote{To see this, consider Example 2 above with parameters $\alpha = 0.1$ and $\beta = 0$. Then Proposition~\ref{prop:AzrieliConjecture} ensures that the experiment $P$ dominates $Q$ in large samples. However, given a uniform prior, the residual uncertainty under $P$ is calculated as the expected entropy of posterior beliefs, which is $\frac{1}{2}\log(2) \approx 0.346$. The residual uncertainty under $Q$ is $-\alpha \log \alpha - (1-\alpha)\log(1-\alpha) \approx 0.325$, which is lower.}

\paragraph{Majorization and Quantum Information.}

Our work is related to the study of {\em majorization} in the quantum information literature. Majorization is a stochastic order commonly defined for distributions on countable sets. For distributions with a given support size, this order is closely related to the Blackwell order. Let $P = (\Omega,P_0,P_1)$ and $Q = (\Xi,Q_0,Q_1)$ be two experiments such that $\Omega$ and $\Xi$ are finite and of the same size, and $P_0$ and $Q_0$ are the uniform distributions on $\Omega$ and $\Xi$. Then $P$ Blackwell dominates $Q$ if and only if $P_1$ majorizes $Q_1$ \citep[see][p.\ 264]{torgersen1985majorization}. This no longer holds when $\Omega$ and $\Xi$ are of different sizes.

Motivated by questions in quantum information, \cite{jensen2019asymptotic} asks the following question: Given two finitely supported distributions $\mu$ and $\nu$, when does the $n$-fold product $\mu^{\times n} = \mu \times \cdots \times \mu$ majorize $\nu^{\times n}$ for all large $n$? He shows that for the case that $\mu$ and $\nu$ have {\em different} support sizes, the answer is given by the ranking of their R\'enyi entropies.\footnote{As discussed above, majorization with different support sizes does not imply Blackwell dominance. Indeed, the ranking based on R\'enyi entropies is distinct from our ranking based on R\'enyi divergences unless the support sizes are equal. See \S\ref{sec:jensen} in the appendix for details.} For the case of equal support size, Theorem~\ref{thm:eventual} implies a similar result, which \citet[Remark 3.9]{jensen2019asymptotic} conjectures to be true. We prove his conjecture in \S\ref{sec:jensen} in the appendix.

\cite{fritz2018generalization} uses an abstract algebraic approach to prove a result that is complementary to Proposition~\ref{prop:large-deviations-compare}. While Fritz's theorem does not require our genericity condition, the comparison of distributions is stated in terms of a notion of approximate stochastic dominance. A result similar to Proposition~\ref{prop:large-deviations-compare} (but without the $\eta$ and the quantitative bound on $n$) appears as Lemma 2 in \cite{aubrun2008catalytic}, also in the context of majorization and quantum information theory.

Both \cite{fritz2018generalization} and \cite{jensen2019asymptotic}, in their respective settings, ask a question in the spirit of our dominance ratio, and prove results that are similar to Proposition~\ref{prop:index}.
 


\paragraph{Experiments for Many States and Unbounded Experiments.}
Our analysis leaves open a number of questions. The most salient is the extension of Theorem~\ref{thm:eventual}, our characterization of dominance in large samples, to experiments with more than two states. In \S\ref{sec:many states} in the appendix, we identify a set of \emph{necessary conditions} for large sample dominance. These conditions are expressed in terms of the moment generating function of the log-likelihood ratios---which generalizes the ranking of R\'enyi divergences in the two state case. While we conjecture this set of conditions to be also sufficient, our proof technique for sufficiency does not straightforwardly extend to more than two states. In particular, we do not know how to extend the reduction of Blackwell dominance to first-order stochastic dominance (Theorem~\ref{prop:reduction}).\footnote{If such a reduction could be obtained, the remaining obstacle would be the characterization of first-order stochastic dominance between large i.i.d.\ sums of random \emph{vectors}. This would require the development of large deviation estimates in higher dimensions (generalizing Lemma~\ref{lem:large-deviation-lower-bound} in the appendix).} With binary states we have been able to derive this simplification because one-dimensional convex (indirect utility) functions admit an one-parameter family of extremal rays. Going to higher dimensions, the difficulty is that ``the extremal rays are too complex to be of service'' \citep{jewitt2007orders}.

Another extension for future work is to experiments with unbounded likelihood ratios. As we demonstrate in \S\ref{sec:unbounded} in the appendix, our characterization of the large sample order remains valid if the dominant experiment $P$ is unbounded whereas the dominated experiment $Q$ is bounded. The result also extends, under an additional assumption, to pairs of unbounded experiments whose R\'enyi divergences are finite. However, we do not know whether and how our result would generalize to the case of infinite R\'enyi divergences. The technical challenge is that large deviation estimates that are uniform across different thresholds typically require the moment generating function to be finite (so-called ``Cram\'er's condition'').\footnote{Although Cram\'er's result that $\log \Pr{X_1+\cdots+X_n > na} \sim -n\cdot K_X^*(a)$ remains true even when $K_X(t)$ can be infinite, as far as we know the proofs of this generalization do not deliver a quantitative lower bound similar to our Lemma~\ref{lem:large-deviation-lower-bound}. As a consequence, Cram\'er's approximation is not uniform across $a$.} 

\newpage
\begin{center}
    {\bf \Large Appendix}
\end{center}
\medskip

\appendix
\addcontentsline{toc}{section}{Appendix}
\addtocontents{toc}{\setcounter{tocdepth}{-1}}

The structure of the appendix follows that of the paper. After reviewing large deviations theory, we complete the proof of Theorem~\ref{thm:eventual} by supplying the proofs of Proposition~\ref{prop:large-deviations-compare}, Lemma~\ref{lem:signs} and Lemma~\ref{lem:eta}. We then provide proofs for our other results in the order in which they appeared. 

\section{Large Deviations}\label{sec:large-deviations}

For every bounded random variable $X$ that is not a constant, we denote by $M_X(t) = \log \mathbb{E}[\ee^{tX}]$ and $K_X(t) = \log M_X(t)$ the moment and cumulant generating functions of $X$.

As is well known, $M_X$ and $K_X$ are strictly convex. We denote by
\[
    K_{X}^*(a) = \sup_{t \in \R} t\cdot a - K_{X}(t)
\]
the Fenchel conjugate of $K_X$. For $a \in (\min[X],\max[X])$ the maximization problem has a unique solution, achieved at some $t \in \R$.  This solution $t$ is non-negative if and only if $a \geq \mathbb{E}[X]$. In addition, as $K_X(0) = 0$, $K_X^*(a) \geq 0 \cdot a - K_X(0) = 0$ is non-negative. The function $K_X^*(a)$ is continuous (in fact, analytic) wherever it is finite. 

The well known Chernoff bound states that if $X,X_1,\ldots,X_n$ are an i.i.d.\ sequence, then
\begin{align*}
    \Pr{X_1+\cdots+X_n > na} \leq \ee^{-n\cdot K_X^*(a)} ~~~\text{for all } a \geq \mathbb{E}[X].
\end{align*}
The next proposition gives a lower bound for this probability.
\begin{lemma}\label{lem:large-deviation-lower-bound}
Let $X,X_1,\ldots,X_n$ be an i.i.d.\ sequence taking values in $[-b,b]$. For all $\eta > 0$, $a \in [\min[X], \max[X]-\eta)$ and $n \geq 1$, it holds that
\begin{align*}
    \Pr{X_1+\cdots+X_n > na} \geq \ee^{-n\cdot K_X^*(a+\eta)}\left(1-\frac{4b^2}{n\eta^2}\right)
\end{align*}
\end{lemma}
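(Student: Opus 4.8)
The plan is to establish this quantitative version of Cram\'er's lower bound by exponential tilting (change of measure), using Chebyshev's inequality to control the mass of the tilted law. Write $S_n = X_1 + \cdots + X_n$, and for a tilt $t \geq 0$ let $\tilde{P}$ be the law on $[-b,b]$ with $\dd\tilde{P}/\dd P = \ee^{tX - K_X(t)}$; on the $n$-fold product the change-of-measure identity reads $\Pr{A} = \mathbb{E}_{\tilde P}\!\left[\ind{A}\,\ee^{-tS_n + nK_X(t)}\right]$ for any event $A$. Taking $t$ to be the maximizer $t^*(s)$ defining $K_X^*(s)$ in \eqref{eq:fenchel} makes the $\tilde P$-mean of $X$ equal to $s$. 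The naive choice $s = a+\eta$ together with the window $A = \{na < S_n \leq n(a+\eta)\}$ already produces the exact exponential factor $\ee^{-nK_X^*(a+\eta)}$, since on $A$ one has $S_n \leq n(a+\eta)$ and hence $\ee^{-t^*S_n} \geq \ee^{-t^*n(a+\eta)}$; but this window is centered at the tilted mean, so $\tilde P(A) \approx 1/2$, far short of the required factor $1 - 4b^2/(n\eta^2)$. The difficulty is to obtain the sharp exponent and a tilted mass close to one at the same time.

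My resolution is to tilt instead to the midpoint $m = a + \eta/2$, which lies in $(\min[X],\max[X])$ for every admissible $a$, while keeping the window $A = \{na < S_n \leq n(a+\eta)\}$; this window is now symmetric about the tilted mean $nm$ with half-width $n\eta/2$. Since $X$ takes values in $[-b,b]$ its tilted variance is at most $b^2$, so Chebyshev's inequality yields $\tilde P(A) \geq 1 - 4b^2/(n\eta^2)$, exactly the stated constant. On $A$ we still have $S_n \leq n(a+\eta)$, and in the regime $m \geq \E{X}$, where $t^* = t^*(m) \geq 0$, bounding the tilted integrand from below gives $\Pr{S_n > na} \geq \ee^{-n(K_X^*(m) + t^*\eta/2)}\,\tilde P(A)$. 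The crux is then the inequality $K_X^*(m) + t^*\eta/2 \leq K_X^*(a+\eta)$, which recovers the sharp exponent. This is where convexity does the work: by convex duality $(K_X^*)'(s) = t^*(s)$ is nondecreasing in $s$, so $K_X^*(a+\eta) - K_X^*(m) = \int_m^{a+\eta} t^*(s)\,\dd s \geq t^*(m)\,(\eta/2)$, which is precisely what is needed.

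It remains to treat the complementary (law-of-large-numbers) regime $a < \E{X} - \eta/2$, where no tilting is needed: here $na < n\E{X} - n\eta/2$, so $\{S_n \leq na\} \subseteq \{|S_n - n\E{X}| \geq n\eta/2\}$, and a single Chebyshev estimate gives $\Pr{S_n > na} \geq 1 - 4b^2/(n\eta^2)$; since $\ee^{-nK_X^*(a+\eta)} \leq 1$ the claimed bound follows a fortiori. I expect the main obstacle to be exactly the reconciliation in the second paragraph: tilting directly to $a+\eta$ delivers the exponent but loses a constant, while the midpoint tilt succeeds only because convexity of $K_X^*$ makes the gain in the exponent from moving the target from $m$ to $a+\eta$ outweigh the loss $t^*\eta/2$ incurred by evaluating the integrand at the right endpoint of the window. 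The remaining ingredients --- the change-of-measure identity, the variance bound $\Var_{\tilde{P}}(X) \leq b^2$, and the two Chebyshev estimates --- are routine.
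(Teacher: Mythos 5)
Your proposal is correct and follows essentially the same argument as the paper's proof: the same case split at $a \geq \E{X}-\eta/2$, the same exponential tilt to the midpoint $a+\eta/2$, the same window $(na, n(a+\eta))$ centered at the tilted mean, and the same Chebyshev estimate with variance bound $b^2$. The only cosmetic difference is your final step: the inequality $K_X^*(a+\eta/2) + t^*\eta/2 \leq K_X^*(a+\eta)$, which you derive by integrating $(K_X^*)' = t^*(\cdot)$, follows in one line in the paper from the definition of the Fenchel conjugate as a supremum evaluated at $t = t^*$, since $t^*(a+\eta) - K_X(t^*) = K_X^*(a+\eta/2) + t^*\eta/2$.
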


\begin{proof}
We first consider the case where $a \geq \mathbb{E}[X] - \eta/2$.
Define $t$ by 
\[
K_X'(t) = a+\eta/2,
\]
so that $K^*_X(a+\eta/2) = (a+\eta/2) \cdot t - K_X(t)$. Such a $t$ is a non-negative finite number, since $\mathbb{E}[X] \leq a+\eta/2 < \max[X]$. 

Denote by $\nu$ the distribution of $X$, and let $\hat{X}$ be a real random variable whose distribution $\hat{\nu}$ is given by
\begin{align*}
    \frac{\dd \hat \nu}{\dd \nu}(x) = \frac{\ee^{tx}}{\mathbb{E}[\ee^{tX}]} =
    \ee^{tx - K_X(t)}.
\end{align*}
This construction ensures that $\hat{\nu}$ is also a probability measure, so that $\hat{X}$ is a well-defined random variable. 

Note that
\begin{align*}
    \mathbb{E}[\hat{X}] = \frac{\mathbb{E}[X\ee^{tX}]}{\mathbb{E}[\ee^{tX}]} = K_X'(t) = a+\eta/2,
\end{align*}
and that the cumulant generating function of $\hat{X}$ is
\[
    K_{\hat X}(s) = \log\mathbb{E}[\ee^{s \hat X}] = \log\mathbb{E}[\ee^{tX-K_X(t)}\ee^{s X}] = K_X(s+t)-K_X(t).
\]
  
Now let $\hat{X}_1,\ldots,\hat{X}_n$ be i.i.d.\ copies of $\hat{X}$. Denote $S_n = X_1+\cdots+X_n$ and $\hat{S}_n = \hat{X}_1+\cdots+\hat{X}_n$. The cumulant generating function of $\hat{S}_n$ is
\[
    K_{\hat{S}_n}(s)=nK_{\hat{X}}(s) = n(K_X(s+t)-K_X(t)) = K_{S_n}(s+t)-K_{S_n}(t),
\]
and so the Radon-Nikodym derivative between the distributions of $\hat{S}_n$ and $S_n$ is $\ee^{tx - K_{S_n}(t)}=\ee^{tx-nK_X(t)}$. Hence
\begin{align*}
    \Pr{S_n > na}
    &= \mathbb{E}[\ind{S_n > na}]\\
    &= \mathbb{E}\left[\ee^{-t\hat S_n + nK_X(t)}\ind{\hat S_n > na}\right]\\
    &= \ee^{nK_X(t)} \cdot \mathbb{E}\left[\ee^{-t \hat S_n}\ind{\hat S_n > na}\right].
\end{align*}
The event $\{\hat S_n > na\}$ contains the event $\{n(a+\eta) > \hat S_n > na\}$, and so
\begin{align*}
    \Pr{S_n > na}
    &\geq \ee^{nK_X(t)} \cdot \mathbb{E}\left[\ee^{-t \hat S_n}\ind{n(a+\eta) > \hat S_n > na}\right]\\
    &\geq \ee^{nK_X(t)-tn(a+\eta)} \cdot \mathbb{E}\left[\ind{n (a+\eta) > \hat S_n > na}\right]\\
    &= \ee^{nK_X(t)-tn(a+\eta)} \cdot \Pr{n( a+\eta) > \hat S_n > na}
\end{align*}
where the second inequality uses $t \geq 0$ and $\hat S_n < n(a+\eta)$ whenever $\ind{n(a+\eta) > \hat S_n > na} > 0$. 

Now, $\hat S_n$ has expectation $n\mathbb{E}[\hat{X}] = n(a+\eta/2)$. Its variance is $n \Var[\hat{X}] \leq n\mathbb{E}[\hat{X}^2] \leq nb^2$, since $\hat{X}$ has the same support of $X$ by construction. Therefore, by the Chebyshev inequality,
\begin{align*}
    \Pr{n(a+\eta) > \hat{S}_n > na} = 1 - \Pr{\vert \hat{S}_n - \mathbb{E}[\hat{S}_n] \vert \geq n\eta/2}\geq 1-\frac{nb^2}{(n\eta/2)^2} = 1 - \frac{4b^2}{n\eta^2}.
\end{align*}
We have thus shown that 
\begin{align*}
    \Pr{S_n > na}
    \geq \ee^{-n(t(a+\eta)-K_X(t))}\left(1-\frac{4b^2}{n\eta^2}\right).
\end{align*}
Now, by definition $K^*_X(a+\eta) \geq t(a+\eta)-K_X(t)$. Hence we arrive at
  \begin{align*}
    \Pr{S_n > na}
    \geq \ee^{-n \cdot K_X^*(a+\eta)}\left(1-\frac{4b^2}{n\eta^2}\right).
  \end{align*}
  
We turn to the case where $a < \mathbb{E}[X]-\eta/2$. In this case, we can directly apply the Chebyshev inequality and obtain
\[
    \Pr{S_n \leq na} \leq \Pr{S_n - \mathbb{E}[S_n] \leq -n\eta/2} \leq \frac{\Var[S_n]}{(n\eta/2)^2} = \frac{n \Var[X]}{(n\eta/2)^2} \leq \frac{4b^2}{n\eta^2}.
\]
Hence
\begin{align*}
    \Pr{S_n > na} \geq 1-\frac{4b^2}{n\eta^2}.
\end{align*}
Since $K_X^*$ is non-negative, we again have
\begin{align*}
    \Pr{S_n > na} \geq \ee^{-n \cdot K_X^*(a+\eta)}\left(1-\frac{4b^2}{n\eta^2}\right). 
\end{align*}
This proves the lemma.
\end{proof}

\subsection{Proof of Proposition~\ref{prop:large-deviations-compare}}


If $a<\min[X]$ then the statement holds since in \eqref{eq:large-deviation-compare} the LHS is equal to 1. Below we assume $a \geq \min[X]$. By assumption, $K_X^*(a+\eta)$ is finite, and hence $a + \eta < \max[X]$. We can thus apply Lemma~\ref{lem:large-deviation-lower-bound} to $X$ and conclude that for every $n \geq 1$,
\begin{align*}
    \Pr{X_1+\cdots+X_n > na}
    \geq \ee^{-n \cdot K_X^*(a+\eta)}\left(1-\frac{4b^2}{n\eta^2}\right).
\end{align*}
By assumption we have that $K_Y^*(a) -\eta\geq K_X^*(a+\eta)$, and so
\begin{align*}
    \Pr{X_1+\cdots+X_n > na}
    &\geq \ee^{-n \cdot K_Y^*(a)}\ee^{n\eta}\left(1-\frac{4b^2}{n\eta^2}\right) \\
    &\geq \ee^{-n \cdot K_Y^*(a)}(1+\eta)\left(1-\frac{4b^2}{n\eta^2}\right) 
\end{align*}
Hence, for $n \geq 4b^2(1+\eta)\eta^{-3}$, 
\begin{align*}
    \Pr{X_1+\cdots+X_n > na} \geq \ee^{-n \cdot K_Y^*(a)}.
\end{align*}
On the other hand, since $a \geq \mathbb{E}[Y]$ by assumption, we have the Chernoff bound
\begin{align*}
    \Pr{Y_1+\cdots+Y_n > na} \leq \ee^{-n \cdot K_Y^*(a)}.
\end{align*}
This proves the desired result \eqref{eq:large-deviation-compare}.

\section{Proof of Lemma~\ref{lem:signs}}

An exponential distribution has probability density function that vanishes for negative $u$ and equals $\ee^{-u}$ for positive $u$. Thus $\Tilde{F}_1$ and $\Tilde{G}_1$ can be written as
\[
    \Tilde{F}_1(a) = \int_{0}^\infty F_1(a + u)\ee^{-u} \,\dd u
\]
and likewise
\[
    \tilde{G}_1(a) = \int_0^{\infty} G_1(a + u)\ee^{-u} \,\dd u.
\]

Consider the first part of the lemma. Suppose $a \geq $0, then by assumption $F_1(a+u) \leq G_1(a+u)$ for all $u \geq 0$, which implies $\Tilde{F}_1(a) \leq \tilde{G}_1(a)$. 

For the second part of the lemma, we will establish the following identities:
\begin{equation}\label{eq:TildeF_alternative}
    \Tilde{F}_1(a) = \int_{-a}^\infty F_0(v)\ee^{-v} \,\dd v \text{\quad and \quad}\tilde{G}_1(a) = \int_{-a}^\infty G_0(v)\ee^{-v} \,\dd v.
\end{equation}
Given this, the result would follow easily: If $F_0(v) \leq G_0(v)$ for all $v \geq 0$, then the above implies $\Tilde{F}_1(a) \leq \tilde{G}_1(a)$ for all $a \leq 0$.

To show \eqref{eq:TildeF_alternative}, we recall \eqref{eq:def-TildeF} and write 
\begin{equation}\label{eq:zzz}
\Tilde{F}_1(a) = \int_{-\infty}^{a} \,\dd F_1(u) + \ee^a \int_{a}^{\infty} \ee^{-u} \,\dd F_1(u).
\end{equation}
The key observation is that $\dd F_1(u) = -\ee^u \,\dd F_0(-u)$. Indeed, $\dd F_1(u)$ is the density under state $1$ that the log-likelihood ratio $\log(\dd P_1/\dd P_0)$ is equal to $u$, which is also the density under state $1$ that the opposite log-likelihood ratio $\log(\dd P_0/\dd P_1)$ is equal to $-u$. By definition of the log-likelihood ratio, this density is scaled by a factor of $e^{-u}$ when we change measure from state $1$ to state $0$. 

Substituting $\dd F_1(u) = -\ee^u \,\dd F_0(-u)$ into \eqref{eq:zzz}, we have \[
\Tilde{F}_1(a) = \int_{-\infty}^{a} -\ee^u \,\dd F_0(-u) + \ee^a \int_{a}^{\infty} -\,\dd F_0(-u) = \int_{-a}^{\infty} \ee^{-v} \,\dd F_0(v) + \ee^a F_0(-a),
\]
where the second equality uses change of variable from $u$ to $v = -u$. Integration by parts then yields \eqref{eq:TildeF_alternative} and completes the proof.

\section{Proof of Lemma~\ref{lem:eta}}

Fix $\theta$, we will show the result holds for all sufficiently small positive $\eta$. Because $P$ dominates $Q$ in the R\'enyi order, and the pair of experiments is generic, the two log-likelihood ratios satisfy $0 < \mathbb{E}[Y^\theta] < \mathbb{E}[X^\theta]$ and $\max[Y^\theta] < \max[X^\theta]$. 

For the first part of the lemma, consider the interval $A = [\mathbb{E}[X^\theta],\max[Y^\theta]]$. If it is empty (i.e., $\mathbb{E}[X^\theta] > \max[Y^\theta]$), the result trivially holds by choosing $\eta$ small. Otherwise, consider any point $a \in A$. Since $a$ is above the expectation of $X^\theta$,
\[
    K^*_{X^\theta}(a) = \sup_{t \geq 0} ta - K_{X^\theta}(t).
\]
And because $a < \max[X]$ the supremum is achieved at some finite $\hat{t} \geq 0$. Dominance in the R\'enyi order implies, by \eqref{eq:KX-KYa},
\[
    K^*_{X^\theta}(a) = \hat{t}a - K_{X^\theta}(\hat{t}) \leq \hat{t}a - K_{Y^\theta}(\hat{t}) \leq K^*_{Y^\theta}(a).
\]
The first inequality can only hold equal if $\hat{t} = 0$ and $a = \mathbb{E}[X^\theta]$, but in that case the second inequality is strict because $a$ is strictly above the expectation of $Y^\theta$. Hence $K_{Y^\theta}^*(a) > K_{X^\theta}^*(a)$ for all $a$ in $A$. Since $A$ is compact and the two Fenchel transforms are continuous, we can find $\eps_1$ positive such that $K_{Y^\theta}^*(a) - \eps_1 > K_{X^\theta}^*(a)$ over all $a \in A$. Choosing positive $\eps_2$ sufficiently small, we in fact have $K_{Y^\theta}^*(a) - \eps_1 > K_{X^\theta}^*(a)$ for all $a$ in the slightly bigger interval $[\mathbb{E}[X^\theta]-\eps_2,\max[Y^\theta]]$. By uniform continuity, any small positive $\eta$ satisfies $K_{X^\theta}^*(a + \eta) - K_{X^\theta}^*(a) < \frac{\eps_1}{2}$ for all $a$ in this interval. If in addition $\eta < \min\{\frac{\eps_1}{2}, \eps_2\}$, then 
\[
K_{Y^\theta}^*(a) - \eta > K_{Y^\theta}^*(a) - \eps_1 + \frac{\eps_1}{2} > K_{X^\theta}^*(a) + \frac{\eps_1}{2} > K_{X^\theta}^*(a + \eta)
\]
for all $a \in [\mathbb{E}[X^\theta]-\eps_2,\max[Y^\theta]]$, and thus for $a \in [\mathbb{E}[X^\theta]-\eta,\max[Y^\theta]]$. This yields the desired result. 

As for the second half, consider a point $a \in [0,\mathbb{E}[Y^\theta]]$. Since $a \leq \mathbb{E}[Y^\theta]$ and $a \geq 0 > \min[Y^\theta]$,\footnote{The latter holds because $\max[Y^{1-\theta}] \geq \mathbb{E}[Y^{1-\theta}] > 0$, and by definition $\min[Y^\theta] = -\max[Y^{1-\theta}]$.} there exists a finite $\tilde{t} \leq 0$ such that $K^*_{Y^\theta}(a) = \tilde{t}a - K_{Y^\theta}(\tilde{t})$. This $\tilde{t}$ satisfies $K'_{Y^\theta}(\tilde{t}) = a$. 

We now show that $\tilde{t} > -1$. The cumulant generating functions of $Y^\theta$ and $Y^{1-\theta}$ satisfy for all $t \in \R$ the relation
\[
  K_{Y^\theta}(t) = K_{Y^{1 - \theta}}(-t - 1)
\]
and hence $K'_{Y^\theta}(-1) = -K'_{Y^{1 - \theta}}(0) = - \mathbb{E}[Y^{1-\theta}] < 0$. Since $K'_{Y^\theta}(\tilde{t}) = a \geq 0$, and $K'_{Y^\theta}$ is increasing, we have $\tilde{t} \in (-1,0]$. Dominance in the R\'enyi order then implies, by \eqref{eq:KX-KYb},
\[
    K^*_{Y^\theta}(a) = \tilde{t}a - K_{Y^\theta}(\tilde{t}) \leq \tilde{t}a - K_{X^\theta}(\tilde{t}) \leq K^*_{X^\theta}(a).
\]
Similar to before, the first inequality can only hold equal if $\tilde{t} = 0$ and $a = \mathbb{E}[Y^\theta]$, but in that case the second inequality is strict because $a$ is strictly below the expectation of $X^\theta$. Hence $K^*_{Y^\theta}(a) < K^*_{X^\theta}(a)$ for all $a \in [0,\mathbb{E}[Y^\theta]]$. Using continuity as before, any sufficiently small $\eta$ makes $K_{Y^\theta}^*(a-\eta) < K^*_{X^\theta}(a) - \eta$ hold for all $a$ in the slightly bigger interval $[0, \mathbb{E}[Y^\theta] + \eta]$. Hence the lemma holds.

\section{Proof of Proposition~\ref{prop:example}}

Let $p_1$ (resp.\ $p_3$) be the essential minimum (resp.\ maximum) of the distribution $\pi$ of posterior beliefs induced by $P$. Since the support of $\pi$ has at least $3$ points, we can find $p_2 \in (p_1, p_3)$ such that $\pi([p_1,p_2]) > \pi(\{p_1\})$ and $\pi([p_2,p_3]) > \pi(\{p_3\})$. 

We use this $p_2$ to construct an experiment $Q$ which has signal space $\{0, 1\}$, and which is a garbling of $P$. Specifically, if a signal realization under $P$ leads to posterior belief below $p_2$, the garbled signal is 0. If the posterior belief under $P$ is above $p_2$, the garbled signal is 1. Finally, if the posterior belief is exactly $p_2$, we let the garbled signal be 0 or 1 with equal probabilities.

Since $\pi([p_1,p_2]) > \pi(\{p_1\})$, the signal realization ``0'' under experiment $Q$ induces a posterior belief that is strictly bigger than $p_1$, and smaller than $p_2$. Likewise, the signal realization ``1'' induces a belief strictly smaller than $p_3$, and bigger than $p_2$. Thus $P$ and $Q$ form a generic pair, and the distribution $\tau$ of posterior beliefs under $Q$ is a strict mean-preserving contraction of $\pi$. We now recall that the R\'enyi divergences are derived from strictly convex indirect utility functions $u(p) = -p^t(1-p)^{1-t}$ for $0 < t < 1$ and $v(p) = p^t(1-p)^{1-t}$ for $t > 1$. Thus, $R_P^\theta(t) > R_Q^\theta(t)$ for all $\theta \in \{0,1\}$ and $t > 0$. 

We will perturb $Q$ to be a slightly more informative experiment $Q'$, such that $P$ still dominates $Q'$ in the R\'enyi order but not in the Blackwell order. For this, suppose that under $Q$ the posterior belief equals $q_1 \in (p_1, p_2)$ with some probability $\lambda$, and equals $q_2 \in (p_2, p_3)$ with remaining probability. Choose any small positive number $\eps$, and let $Q'$ be another binary experiment inducing the posterior belief $q_1 - \eps(1-\lambda)$ with probability $\lambda$, and inducing the posterior belief $q_2 + \eps \lambda$ otherwise. Such an experiment exists, because the expected posterior belief is unchanged. By continuity, $R_P^\theta(t) > R_{Q'}^\theta(t)$ still holds when $\eps$ is sufficiently small.\footnote{Using the relation between $R_P^0(t)$ and $R_P^1(1-t)$, it suffices to show $R_P^\theta(t) > R_{Q'}^\theta(t)$ for $\theta \in \{0, 1\}$ and $t \geq 1/2$. Fixing a large $T$, then by uniform continuity, $R_P^\theta(t) > R_Q^\theta(t)$ implies $R_P^\theta(t) > R_{Q'}^\theta(t)$ for $t \in [1/2, T]$ when $\eps$ is small. This also holds for $t$ large, because as $t \to \infty$ the growth rate of the R\'enyi divergences are governed by the maximum of likelihood ratios, which is larger under $P$ than under $Q'$.} Since $P$ and $Q'$ also form a generic pair, Theorem~\ref{thm:eventual} shows that $P$ dominates $Q'$ in large samples. 

It remains to prove that $P$ does not dominate $Q'$ according to Blackwell. Consider a decision problem where the prior is uniform, the set of actions is $\{0,1\}$, and payoffs are given by $u(\theta = a = 0) = p_2$, $u(\theta = a = 1) = 1-p_2$ and $u(\theta \neq a) = 0$. The indirect utility function is $v(p) = \max\{(1-p)p_2,~ p(1-p_2)\}$, which is piece-wise linear on $[0, p_2]$ and $[p_2, 1]$ but convex at $p_2$. Recall that in constructing the garbling from $P$ to $Q$, those posterior beliefs under $P$ that are below $p_2$ are ``averaged'' into the single posterior belief $q_1$ under $Q$, and those above $p_2$ are averaged into the belief $q_2$. Thus $Q$ achieves the same expected utility in this decision problem as $P$ (despite being a garbling). Nevertheless, observe that $Q'$ achieves higher expected utility in this decision problem than $Q$.\footnote{Formally, since $q_1 - \eps(1-\lambda) < q_1 < p_2$ and $q_2 + \eps \lambda > q_2 > p_2$, it holds that
\[
\lambda \cdot v(q_1 - \eps(1-\lambda) ) + (1-\lambda) \cdot v(q_2 + \eps \lambda) > \lambda \cdot v(q_1) + (1-\lambda) \cdot v(q_2).
\]
} Hence $Q'$ achieves higher expected utility than $P$, implying that it is not Blackwell dominated.

\section{Proof of Proposition~\ref{prop:AzrieliConjecture}}

It is easily checked that the condition $R_P^1(1/2) > R_Q^1(1/2)$ reduces to 
\begin{equation}\label{eq:AzrieliCondition}\sqrt{\alpha(1-\alpha)} > \sqrt{\beta(\frac{1}{2}-\beta)} + \frac{1}{4}.
\end{equation}
Since the experiments form a generic pair, by Theorem~\ref{thm:eventual}, we just need to check dominance in the R\'enyi order. Equivalently, we need to show 
\begin{align}
(\frac12 - \beta)^r \beta^{1-r} + (\frac12 - \beta)^{1-r} \beta^r + \frac{1}{2} &< (1-\alpha)^r \alpha^{1-r} + (1-\alpha)^{1-r} \alpha^r, \quad \forall 0 < r < 1;\label{eq:AzrieliIneq1} \\
(\frac12 - \beta)^r \beta^{1-r} + (\frac12 - \beta)^{1-r} \beta^r + \frac{1}{2} &> (1-\alpha)^r \alpha^{1-r} + (1-\alpha)^{1-r} \alpha^r, \quad \forall r < 0 \text{ or } r > 1;\label{eq:AzrieliIneq2} \\
\beta \cdot \ln(\frac{\beta}{\frac12-\beta}) + (\frac12 - \beta) \cdot \ln(\frac{\frac12-\beta}{\beta}) &> \alpha \cdot \ln(\frac{\alpha}{1-\alpha}) + (1-\alpha) \cdot \ln(\frac{1-\alpha}{\alpha}).\label{eq:AzrieliIneq3} 
\end{align}

To prove these, it suffices to consider the $\alpha$ that makes \eqref{eq:AzrieliCondition} hold with equality.\footnote{It is clear that the inequalities are easier to satisfy when $\alpha$ increases in the range $[0, \frac{1}{2}]$.} We will show that the above inequalities hold for this particular $\alpha$, except that \eqref{eq:AzrieliIneq1} holds equal at $r = \frac12$. Let us define the following function
\[
\Delta(r) := (\frac12 - \beta)^r \beta^{1-r} + (\frac12 - \beta)^{1-r} \beta^r + \frac{1}{2} - (1-\alpha)^r \alpha^{1-r} - (1-\alpha)^{1-r} \alpha^r.
\]
When \eqref{eq:AzrieliCondition} holds with equality, we have $\Delta(0) = \Delta(\frac{1}{2}) = \Delta(1) = 0$. Thus $\Delta$ has roots at $0$, $1$ as well as a double-root at $\frac{1}{2}$. But since $\Delta$ is a weighted sum of $4$ exponential functions plus a constant, it has at most 4 roots (counting multiplicity).\footnote{This follows from Rolle's Theorem and an induction argument.} Hence these are the only roots, and we deduce that the function $\Delta$ has constant sign on each of the intervals $(-\infty, 0), (0, \frac12), (\frac12, 1), (1, \infty)$. 

Now observe that since $2\beta < \alpha \leq \frac{1}{2}$, it holds that $\frac{1/2-\beta}{\beta} > \frac{1-\alpha}{\alpha} > 1$. It is then easy to check that $\Delta(r) \to \infty$ as $r \to \infty$. Thus $\Delta(r)$ is strictly positive for $r \in (1, \infty)$. As $\Delta(1) = 0$, its derivative is weakly positive. But recall that we have enumerated the 4 roots of $\Delta$. So $\Delta$ cannot have a double-root at $r = 1$, and it follows that $\Delta'(1)$ is strictly positive. Hence \eqref{eq:AzrieliIneq3} holds. 

Note that $\Delta'(1) > 0$ and $\Delta(1) = 0$ also implies $\Delta(1-\eps) < 0$. Thus $\Delta$ is negative on $(\frac{1}{2}, 1)$. A symmetric argument shows that $\Delta$ is positive on $(-\infty, 0)$ and negative on $(0, \frac{1}{2})$. Hence \eqref{eq:AzrieliIneq1} and \eqref{eq:AzrieliIneq2} both hold, completing the proof.

\section{Proof of Proposition~\ref{prop:index}}
Denote $r = \inf_{\theta,t}\frac{R_P^\theta(t)}{R_Q^\theta(t)}$. We would like to show that $P/Q = r$. Let $n,m$ be such that $P^{\otimes n} \succeq Q^{\otimes m}$. Then, since ranking of the R\'enyi divergences is a necessary condition for Blackwell dominance, and by the additivity of R\'enyi divergences, $
   n \cdot R_P^\theta(t) \geq m \cdot R_Q^\theta(t) 
$ 
for all $\theta \in \{0,1\}$ and $t > 0$. Thus any such $m/n$ is bounded above by $r$, and so $P/Q \leq r$.

In the other direction, take any rational number $m/n < r$. Then, again by the additivity of the R\'enyi divergences, $P^{\otimes n}$ dominates $Q^{\otimes m}$ in the R\'enyi order. Furthermore, the fact that $\lim_{t \to \infty} \frac{R_P^\theta(t)}{R_Q^\theta(t)} > m/n$ implies the pair $P^{\otimes n}$ and $Q^{\otimes m}$ is generic. Therefore, by Theorem~\ref{thm:eventual}, we have that for some $k$ large enough, 
$
    P^{\otimes nk} \succeq Q^{\otimes mk}.
$
Thus $P/Q \geq mk/nk = m/n$. Since this holds for every rational $m/n$ that is less than $r$, we can conclude that $P/Q \geq r$. Finally, note that each of the functions $R_P^\theta$ and $R_Q^\theta$ are positive, increasing and bounded on $(0,\infty)$. Furthermore, using \[
\frac{R_P^\theta(t)}{R_Q^{\theta}(t)} = \frac{R_P^{1-\theta}(1-t)}{R_Q^{1-\theta}(1-t)}, 
\]
for $t \in (0,1)$, we can rewrite 
\[
P/Q = \inf_{\substack{\theta \in \{0,1\},\\t >0}} \frac{R_P^\theta(t)}{R_Q^\theta(t)} = \inf_{\substack{\theta \in \{0,1\},\\t \geq \frac{1}{2}}}\frac{R_P^\theta(t)}{R_Q^\theta(t)}.
\]
Recall that $R_P^{\theta}(t), R_Q^{\theta}(t)$ are positive, continuous in $t$ and approach $\max[X^{\theta}]$ and $\max[Y^{\theta}]$ as $t \to \infty$. Thus a compactness argument shows that $P/Q$ is always positive.

\bigskip

\bibliography{refs}

\newpage
\begin{center}{{\bf \Large Online Appendix}}\end{center}

\section{Proof of Proposition~\ref{thm:marginal}}
 
That (i) implies (ii) follows from the fact that R\'enyi divergences are monotone in the Blackwell order, and additive with respect to independent experiments.
 
To show (ii) implies (i), we introduce some notation. Given two experiments $P = (\Omega,P_0,P_1)$ and $Q = (\Xi,Q_0,Q_1)$, for each $\alpha \in [0,1]$ we denote by $\alpha P + (1-\alpha)Q = (\Psi,M_0,M_1)$ the mixed experiment where the sample space is the disjoint union $\Psi = \Omega \sqcup \Xi$ endowed with the corresponding $\sigma$-algebra, and the measures $M_0, M_1$ satisfy for every measurable $E \subseteq \Psi$
\[
    M_\theta(E) = \alpha P_\theta(E \cap \Omega) + (1-\alpha)Q_\theta(E \cap \Xi).
\]
Intuitively, the mixed experiment corresponds to a randomized experiment where $P$ is carried out with probability $\alpha$ and $Q$ with probability $1-\alpha$. The mixture operation and the product operation satisfy
$
   (\alpha P + (1-\alpha) Q) \otimes R = \alpha (P \otimes R) + (1-\alpha)(Q \otimes R).
$
 
 
Now suppose $P$ dominates $Q$ in the R\'enyi order, then by Theorem~\ref{thm:eventual}, $P$ dominates $Q$ in the large sample order. The next lemma concludes the proof. 
 
\begin{lemma}
\label{lem:catalytic}
Let $P,Q$ be bounded experiments such that $P$ dominates $Q$ in the large sample order. Then there exists a bounded experiment $R$ such that $P \otimes R$ Blackwell dominates $Q \otimes R$.
\end{lemma}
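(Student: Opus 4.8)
The plan is to exhibit an explicit catalyst $R$ built from products of $P$ and $Q$, via a telescoping mixture that is standard in the catalytic majorization literature. Since $P$ dominates $Q$ in large samples, I first fix any single $n$ with $P^{\otimes n}\succeq Q^{\otimes n}$ (for instance $n=n_0$). I then set
\[
  R \;=\; \frac{1}{n}\sum_{k=0}^{n-1} P^{\otimes k}\otimes Q^{\otimes (n-1-k)},
\]
the uniform mixture (in the disjoint-union sense defined above) of the $n$ experiments $P^{\otimes k}\otimes Q^{\otimes(n-1-k)}$, each a product of $n-1$ bounded factors. Because each factor is bounded and a mixture over disjoint sample spaces assigns its component weights independently of the state, $R$ is itself bounded.

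The heart of the argument is a telescoping identity. Using the distributivity of the product over mixtures, $S\otimes(\alpha T+(1-\alpha)U)=\alpha(S\otimes T)+(1-\alpha)(S\otimes U)$, I compute
\[
  P\otimes R=\frac1n\sum_{j=1}^{n}P^{\otimes j}\otimes Q^{\otimes(n-j)},
  \qquad
  Q\otimes R=\frac1n\sum_{j=0}^{n-1}P^{\otimes j}\otimes Q^{\otimes(n-j)} .
\]
These two uniform mixtures share every term with index $j\in\{1,\dots,n-1\}$; they differ only in that $P\otimes R$ contains the extra term $P^{\otimes n}$ (at $j=n$) where $Q\otimes R$ contains $Q^{\otimes n}$ (at $j=0$). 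Thus, up to a common middle part, comparing $P\otimes R$ with $Q\otimes R$ reduces exactly to comparing $P^{\otimes n}$ with $Q^{\otimes n}$, which we arranged to be Blackwell ranked.

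To convert this term-by-term picture into a genuine Blackwell comparison I would invoke monotonicity of $\succeq$ under matched mixtures: if $E_k\succeq E_k'$ for each $k$, then $\frac1n\sum_k E_k\succeq\frac1n\sum_k E_k'$. This follows from Blackwell's garbling characterization, since a mixture over disjoint sample spaces reveals which component fired, with a component distribution that does not depend on the state $\theta$; one may therefore apply the garbling witnessing $E_k\succeq E_k'$ separately on each component. Applying this with $E_j=E_j'=P^{\otimes j}\otimes Q^{\otimes(n-j)}$ for $1\le j\le n-1$ and with the single genuinely differing pair $P^{\otimes n}\succeq Q^{\otimes n}$ yields $P\otimes R\succeq Q\otimes R$, as desired.

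The main obstacle is the matched-mixture monotonicity step: one must verify that the garbling acting componentwise on the disjoint union is a legitimate Blackwell garbling, which hinges on the component weights being identical across states — true here because each mixing weight equals $1/n$ regardless of $\theta$. The telescoping identity and the boundedness of $R$ are then routine bookkeeping.
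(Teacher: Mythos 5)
Your proof is correct and follows essentially the same route as the paper: the paper's catalyst is $R=\frac{1}{n}\sum_{k=0}^{n-1}P^{\otimes k}\otimes Q^{\otimes(n-k)}$, which is exactly your $R$ tensored with one extra copy of $Q$, and its telescoping computation likewise reduces the comparison to the single ranked pair ($P^{\otimes n}\otimes Q\succeq Q^{\otimes(n+1)}$ in the paper, $P^{\otimes n}\succeq Q^{\otimes n}$ in yours) via distributivity and state-independent mixture weights. The only difference is cosmetic: you spell out the matched-mixture monotonicity step that the paper leaves implicit.
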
 
This lemma replicates a more general statement that appears in \cite{duan2005multiple,fritz2017resource}. \begin{proof}[Proof of Lemma~\ref{lem:catalytic}]
Assume $P^{\otimes n} \succeq Q^{\otimes n}$. Let
$$
  R = \frac{1}{n}\left(Q^{\otimes n}+P\otimes Q^{\otimes(n-1)}+P^{\otimes 2}\otimes Q^{\otimes(n-2)}+\cdots+P^{\otimes(n-2)}\otimes Q^{\otimes 2}+P^{\otimes(n-1)}\otimes Q\right).
$$  
Then
\begin{align*}
  P \otimes R
  &= P \otimes \frac{1}{n}\left(Q^{\otimes n}+P\otimes Q^{\otimes(n-1)}+\cdots+P^{\otimes(n-2)}\otimes Q^{\otimes 2}+P^{\otimes(n-1)}\otimes Q\right)\\
  &= \frac{1}{n}\left(P\otimes Q^{\otimes n}+P^{\otimes 2}\otimes Q^{\otimes(n-1)}+\cdots+P^{\otimes(n-1)}\otimes Q^{\otimes 2}+P^{\otimes n}\otimes Q\right)\\
  &\succeq \frac{1}{n}\left(P\otimes Q^{\otimes n}+P^{\otimes 2}\otimes Q^{\otimes(n-1)}+\cdots+P^{\otimes(n-1)}\otimes Q^{\otimes 2}+Q^{\otimes (n+1)}\right)\\
  &= Q\otimes \frac{1}{n}\left(Q^{\otimes n}+P\otimes Q^{\otimes (n-1)}+\cdots+P^{\otimes(n-1)}\otimes Q\right)\\
  &= Q \otimes R,
\end{align*}
where the middle step uses the assumption $P^{\otimes n} \succeq Q^{\otimes n}$, so that $P^{\otimes n} \otimes Q \succeq Q^{\otimes (n+1)}$. 
\end{proof} 

\section{Proof of Theorem~\ref{thm:divergences}}
 
Throughout this section, we denote by $D$ an additive divergence that satisfies the data-processing inequality and is finite on bounded experiments.

\begin{lemma}
 If a bounded experiment $P=(\Omega,P_0,P_1)$ dominates another bounded experiment $Q = (\Xi, Q_0,Q_1)$ in the Blackwell order, then $D(P_0,P_1) \geq D(Q_0,Q_1)$.
\end{lemma}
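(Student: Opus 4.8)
The plan is to combine Blackwell's garbling characterization with additivity and the data processing inequality; the only subtlety is that a garbling is a \emph{stochastic} kernel, whereas the data processing inequality is stated only for \emph{deterministic} maps. First, since $P \succeq Q$, Blackwell's theorem supplies a measurable garbling $\sigma \colon \Omega \to \Delta(\Xi)$ with $Q_\theta(A) = \int \sigma(\omega)(A)\,\dd P_\theta(\omega)$ for both $\theta \in \{0,1\}$. To convert this randomization into a deterministic function, I would invoke the standard randomization lemma for Polish spaces: there is a Borel measurable $g \colon \Omega \times [0,1] \to \Xi$ such that, writing $\lambda$ for Lebesgue measure on $[0,1]$, the pushforward of $\lambda$ under $g(\omega,\cdot)$ equals $\sigma(\omega)$ for every $\omega$. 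By Fubini, the pushforward of $P_\theta \times \lambda$ under $g$ is then exactly $Q_\theta$, for each $\theta$.

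Next, introduce the trivial experiment with sample space $[0,1]$ and both measures equal to $\lambda$. Since $\dd\lambda/\dd\lambda \equiv 1$, the pair $(\lambda,\lambda)$ is bounded, and hence so is the product pair $(P_0 \times \lambda, P_1 \times \lambda)$, whose likelihood ratio equals $\dd P_1/\dd P_0$ and is therefore bounded above and away from zero. In particular $D$ is finite on it. Additivity, together with $D(\lambda,\lambda)=0$ (as $D$ is a divergence), then gives
\[
  D(P_0 \times \lambda,\, P_1 \times \lambda) = D(P_0, P_1) + D(\lambda, \lambda) = D(P_0, P_1).
\]

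Finally, apply the data processing inequality to the deterministic map $g$:
\[
  D\big(g_*(P_0 \times \lambda),\, g_*(P_1 \times \lambda)\big) \;\leq\; D(P_0 \times \lambda,\, P_1 \times \lambda).
\]
Since $g_*(P_\theta \times \lambda) = Q_\theta$ and the right-hand side equals $D(P_0,P_1)$, this reads $D(Q_0,Q_1) \leq D(P_0,P_1)$, which is the claim.

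The main obstacle — and the only step that is not a one-line manipulation — is the passage from the stochastic garbling to a deterministic map on an enlarged space. The conceptual point is that additivity lets one ``pay for'' the independent randomization device at zero cost (because $D(\lambda,\lambda)=0$), after which the stochastic kernel becomes an ordinary function to which the deterministic data processing inequality applies. I would make sure the randomization lemma is invoked in the needed generality (Polish $\Xi$, jointly measurable $g$); this is classical, and the independence of the randomization from the signal under each $P_\theta$ is precisely what the product structure $P_\theta \times \lambda$ encodes.
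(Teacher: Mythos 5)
Your proof is correct and follows essentially the same route as the paper's: Blackwell's garbling theorem, the randomization lemma for Polish spaces (the paper cites Proposition 10.7.6 in Bogachev) to replace the kernel $\sigma$ by a deterministic map on $\Omega \times [0,1]$, additivity together with $D(\lambda,\lambda)=0$ to absorb the randomization device at zero cost, and finally the data processing inequality applied to that map. The only cosmetic difference is that you spell out the boundedness of $(P_0\times\lambda, P_1\times\lambda)$ and the Fubini step, which the paper leaves implicit.
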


\begin{proof}
 By Blackwell's Theorem there exists a measurable function $\sigma  \colon \Omega \to \Delta(\Xi)$ such that $Q_\theta(A) = \int \sigma(\omega)(A)\,\dd P_\theta(\omega)$ for every measurable $A \subseteq \Xi$ and every $\theta$. Let $\lambda$ be the Lebesgue measure on $[0,1]$. Since $\Omega$ and $\Xi$ are Polish spaces,  there exists a measurable function $f \colon \Omega \times [0,1] \to \Xi$ such that for every $\omega \in \Omega$, $\sigma(\omega) = f(\omega,\cdot)_*(\lambda)$, where $f(\omega,\cdot)_*(\lambda)$ is the push-forward of  $\lambda$ induced by the function $f(\omega,\cdot)$ \cite[see, for example, Proposition 10.7.6 in][]{bogachev2007measure}.
Hence,
\[
    Q_\theta(A) = \int \lambda(\{t \in [0,1] : f(\omega,t) \in A\}) \,\dd P_\theta(\omega) = f_*(P_\theta \times \lambda)(A)
\]
where now $f_*(P_\theta \times \lambda)$ is the pushforward of $P_\theta \times \lambda$ induced by $f$. Being a divergence, $D$ satisfies $D(\lambda,\lambda) = 0$. Moreover, by additivity, $D(P_0 \times \lambda, P_1 \times \lambda) = D(P_0 , P_1)$. The data processing inequality then implies $D(P_0 , P_1) = D(P_0 \times \lambda, P_1 \times \lambda) \geq D(Q_0, Q_1)$.
\end{proof}
 
\begin{lemma}\label{lem:proof-divergences-1}
If the bounded experiments $P=(P_0,P_1)$ and $Q = (Q_0,Q_1)$ satisfy $R_P^\theta (t) \geq R_Q^\theta (t)$ for every $t > 0$ and $\theta \in \{0,1\}$, then $D(P_0,P_1) \geq D(Q_0,Q_1)$.
\end{lemma}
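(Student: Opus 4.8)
The plan is to deduce this weak-inequality version of R\'enyi monotonicity from the \emph{strict} large-sample characterization in Theorem~\ref{thm:eventual}, together with the preceding Blackwell-monotonicity lemma and the additivity of $D$. The difficulty is precisely the gap between hypotheses: Theorem~\ref{thm:eventual} applies only to pairs that are \emph{strictly} ranked in the R\'enyi order and are \emph{generic}, whereas here I am given only the weak inequalities $R_P^\theta(t)\geq R_Q^\theta(t)$. The device that bridges this gap is to compare unequal numbers of copies, namely $P^{\otimes(n+1)}$ against $Q^{\otimes n}$: this turns the weak inequality into a strict one with a quantitative margin, while the resulting loss factor $n/(n+1)$ tends to $1$ and so is harmless in the limit.

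First I would dispose of the trivial cases. If $Q$ is trivial then $D(Q_0,Q_1)=D(Q_0,Q_0)=0\leq D(P_0,P_1)$; if $P$ is trivial then $R_P^\theta\equiv 0$ forces $R_Q^\theta\equiv 0$, so $Q$ is trivial as well. Hence I may assume both experiments are non-trivial, and since the R\'enyi divergences of a non-trivial experiment are strictly positive, $R_Q^\theta(t)>0$ for all $t>0$ and $\theta\in\{0,1\}$.

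Next, for every $n$ I would observe that $P^{\otimes(n+1)}$ strictly dominates $Q^{\otimes n}$ in the R\'enyi order. Indeed, by additivity of R\'enyi divergences,
\[
R_{P^{\otimes(n+1)}}^\theta(t)=(n+1)R_P^\theta(t)\geq (n+1)R_Q^\theta(t)=nR_Q^\theta(t)+R_Q^\theta(t)>nR_Q^\theta(t)=R_{Q^{\otimes n}}^\theta(t)
\]
for all $t>0$ and $\theta$, where the strict step uses $R_Q^\theta(t)>0$. Writing $M_P,m_P$ (resp.\ $M_Q,m_Q$) for the essential maximum and minimum of $\log(\dd P_1/\dd P_0)$ (resp.\ of $Q$), the essential extrema of the log-likelihood ratio of $P^{\otimes(n+1)}$ are $(n+1)M_P$ and $(n+1)m_P$, while those of $Q^{\otimes n}$ are $nM_Q$ and $nm_Q$. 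The equations $(n+1)M_P=nM_Q$ and $(n+1)m_P=nm_Q$ each admit at most one integer solution $n$, so for all but at most two values of $n$ the bounded pair $(P^{\otimes(n+1)},Q^{\otimes n})$ is generic.

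Fixing such a large $n$, Theorem~\ref{thm:eventual} yields that $P^{\otimes(n+1)}$ dominates $Q^{\otimes n}$ in large samples; in particular $P^{\otimes(n+1)k}\succeq Q^{\otimes nk}$ in the Blackwell order for some $k$. All experiments in sight are products of bounded experiments and hence bounded, so the preceding lemma (Blackwell monotonicity of $D$) applies, and combined with additivity of $D$ it gives
\[
(n+1)k\,D(P_0,P_1)=D(P_0^{(n+1)k},P_1^{(n+1)k})\geq D(Q_0^{nk},Q_1^{nk})=nk\,D(Q_0,Q_1).
\]
Cancelling $k$ gives $D(P_0,P_1)\geq \tfrac{n}{n+1}\,D(Q_0,Q_1)$. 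Since this holds for arbitrarily large $n$ and $D(Q_0,Q_1)$ is finite ($Q$ being bounded), letting $n\to\infty$ yields $D(P_0,P_1)\geq D(Q_0,Q_1)$. The only genuinely delicate point is the passage from weak to strict R\'enyi dominance, and the asymmetric $(n+1)$-versus-$n$ comparison resolves it at the negligible cost of the factor $n/(n+1)$.
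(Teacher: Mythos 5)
Your proof is correct, and it follows the same high-level strategy as the paper's: turn the weak R\'enyi inequalities into strict ones for a modified pair of experiments, apply Theorem~\ref{thm:eventual}, and then combine the Blackwell-monotonicity lemma with additivity of $D$ and a limiting argument. The perturbation device, however, is genuinely different. The paper fixes an auxiliary non-trivial bounded experiment $S$ and compares $P^{\otimes k}\otimes S$ with $Q^{\otimes k}$; since $R_S^\theta(t)>0$ for every $t\in(0,\infty]$, tensoring with $S$ buys strictness \emph{and} genericity in one stroke (the essential extrema of the log-likelihood ratio of the $P$-side are shifted by the strictly positive maximum and strictly negative minimum of $\log(\dd S_1/\dd S_0)$), and the error term is $\frac{1}{k}D(S_0,S_1)\to 0$. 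You instead compare $P^{\otimes(n+1)}$ with $Q^{\otimes n}$: strictness comes from $R_Q^\theta(t)>0$, which is why you must first dispose of trivial experiments, and genericity is not automatic, which is why you need the counting argument over $n$; your error term is the factor $n/(n+1)$. The two routes are of comparable difficulty and both lean on the standing finiteness assumption (the paper for $D(S_0,S_1)$, you for $D(Q_0,Q_1)$); yours avoids introducing an auxiliary experiment but pays with the genericity bookkeeping. One point you should make explicit in that bookkeeping: the equation $(n+1)M_P=nM_Q$ admits at most one integer solution only because $M_P\neq 0$ --- non-triviality of $P$ gives $M_P>0$, so either $M_Q=M_P$ and there is no solution at all, or $M_Q\neq M_P$ and $n=M_P/(M_Q-M_P)$ is the unique candidate --- and symmetrically for the minima using $m_P<0$. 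Without this observation the degenerate case $M_P=M_Q=0$ would make the equation hold for \emph{every} $n$. Since you have already reduced to non-trivial $P$ and $Q$, this is a one-line addition, and with it your argument is complete.
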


\begin{proof}
Suppose first that the strict inequality $R_P^\theta (t) > R_Q^\theta (t)$ holds for every $t > 0$, including at the limit $t = \infty$ (corresponding to the genericity assumption in the main text). Then, by Theorem~\ref{thm:eventual} there exists $n$ such that $P^{\otimes n}$ dominates $Q^{\otimes n}$ in the Blackwell order. Hence, by applying the previous lemma and by additivity, we obtain
\[
    nD(P_0,P_1) = D(P_0^n,P_1^n) \geq D(Q_0^n,Q_1^n) = nD(Q_0,Q_1).
\]

More generally, suppose we only have the weak inequality $R_P^\theta (t) \geq R_Q^\theta (t)$ for $t > 0$. Fix a bounded and non-trivial experiment $S = (S_0,S_1)$. Then, for every $k \in \mathbb{N}$ we have
\[
    R_{P^{\otimes k} \otimes S}^\theta (t) = k R_{P}^\theta(t) + R_S^\theta (t) > k R_Q^\theta(t) = R_{Q^{\otimes k}}^\theta (t)
\]
for every $t \in (0, \infty]$ and $\theta \in \{0,1\}$. Given what we just proved, it follows that 
\[
D(P_0^k \times S_0, P_1^k \times S_1) \geq D(Q_0^k, Q_1^k).
\]
By additivity, $D(P_0,P_1) + \frac{1}{k}D(S_0,S_1) \geq D(Q_0,Q_1)$. Since this holds for every $k$ and $D(S_0, S_1)$ is finite, the proof is concluded.
\end{proof}
 
Let $\overline{\R} = [-\infty,\infty]$ be the extended real line. Given a bounded experiment $P$ we define the function $H_P \colon \overline{\R} \to \R$ as
\[
H_P(t) = 
    \begin{cases}
    R^1_P(t) ~~\text{if}~~ t \geq 1/2 \\
    R^0_P(1 - t) ~~ \text{if}~~ t \leq 1/2
    \end{cases}
\]
Recall that the R\'enyi divergences of an experiment $P$ satisfy the relation $(1-t)R^1_P(t) =tR^0_P(1-t)$. This implies that the function $H_P$ is well defined, continuous, and bounded. It is a convenient representation of the R\'enyi divergences that retains the main properties of the latter, and has the advantage of being strictly positive whenever $P$ is nontrivial. Since $H_P(t)$ is continuous and has a compact domain, it is furthermore bounded away from $0$. The functional $P \mapsto H_P$ satisfies two additional properties. An experiment $P$ dominates an experiment $Q$ in the R\'enyi order if and only if $H_P(t) > H_Q(t)$ for every $t$. Moreover, the functional is additive: $H_{P \otimes Q}(t) = H_P(t) + H_Q(t)$ for every $t$. 
 
Thus, to prove Theorem \ref{thm:divergences} it suffices to show that under the hypotheses of the theorem there exists a finite measure $m$ on $\overline{\R}$ such that for every bounded pair of measures $P_0, P_1$
\[
   D(P_0,P_1) = \int_{\overline{\R}} H_P(t)\,\dd m(t)
\]
where $P$ is the experiment $(P_0, P_1)$. 
The theorem's conclusion \eqref{eq:D-rep} follows easily from this by setting $\dd m_0(t) = -\dd m(1-t)$ and $\dd m_1(t) = \dd m(t)$ for $t \geq \frac{1}{2}$.
 
\bigskip
 
Let $C(\overline{\R})$ be the space of continuous functions defined over the compact set $\overline{\R}$. Each function $H_P$ belongs to $C(\overline{\R})$. Consider the set
\[
    \mathcal{H} = \{H_P: P \text{~is a bounded experiment}\} \subseteq C(\overline{\R}).
\]
By Lemma~\ref{lem:proof-divergences-1}, if $H_P = H_Q$ then $D(P_0, P_1) = D(Q_0, Q_1)$. Thus there exists a map $F\colon \mathcal{H} \to \mathbb{R}$ such that $D(P_0,P_1) = F(H_P)$.
 
By Lemma~\ref{lem:proof-divergences-1} the functional $F$ is monotone. It is moreover additive: Given two experiments $P$ and $Q$, the additivity of $D$ and the additivity of $P \mapsto H_P$ imply
\begin{align*}
     F(H_P) + F(H_Q) 
     &= D(P_0, P_1) + D(Q_0,Q_1)\\
     &= D(P_0 \times Q_0, P_1 \times Q_1)\\ 
     &= F(H_{P {\otimes} Q}) \\
     &= F(H_P + H_Q).
\end{align*}
 
Next, we define
$
    \textup{cone}_{\mathbb{Q}}(\mathcal{H}) = \left\{\sum_{i=1}^n \alpha_i H_{P^i} : \alpha_i \in \mathbb{Q}_+, P^i \text{~is a bounded experiment} \right\}
$
to be the rational cone generated by $\mathcal{H}$, where coefficients $(\alpha_i)$ are positive rational numbers. Similarly define 
\[
\textup{cone}(\mathcal{H}) = \left\{\sum_{i=1}^n \alpha_i H_{P^i} : \alpha_i \in \mathbb{R}_+, P^i \text{~is a bounded experiment} \right\}
\]
to be the cone generated by $\mathcal{H}$, where coefficients can be all positive numbers. Below we extend the functional $F$ from $\mathcal{H}$ to $\textup{cone}_{\mathbb{Q}}(\mathcal{H})$ and then to $\textup{cone}(\mathcal{H})$.

Because $P \mapsto H_P$ is additive, $\mathcal{H}$ is itself closed under addition. This implies
\[
    \textup{cone}_{\mathbb{Q}}(\mathcal{H}) = \bigcup_{n \geq 1} \frac{1}{n}\mathcal{H}.
\]
Define $G \colon \textup{cone}_{\mathbb{Q}}(\mathcal{H}) \to \R$ as $G(\frac{1}{n}H_P) = \frac{1}{n}F(H_P)$. The functional $G$ is well-defined: If $\frac{1}{n} H_P = \frac{1}{m} H_Q$ then $H_{P^{\otimes m}} = m H_P = n H_Q = H_{Q^{\otimes n}}$, which implies $m F(H_P) = n F(H_Q)$ by the additivity of $F$. Similarly, $G$ inherits the monotonicity and additivity of $F$ on the larger domain $\textup{cone}_{\mathbb{Q}}(\mathcal{H})$.
 
\bigskip 
 
We now show $G$ is a Lipschitz functional, where we endow the space $C(\overline{\R})$ with the sup norm. Let $S_0$ be a nontrivial experiment, so that $H_{S_0}(t)$ is positive and in fact bounded away from $0$ for every $t$. By letting $S = S_0^{\otimes k}$ for large $k$, we obtain that $H_S(t) > 1$ for every $t$. Given two functions $f, \hat{f} \in \textup{cone}_{\mathbb{Q}}(\mathcal{H})$, we have the pointwise comparison
\[
    f(t) \leq \hat{f}(t) + \Vert f - \hat{f} \Vert \times H_S(t).
\]
Let $r > \Vert f - \hat{f} \Vert$ be a rational number. The additivity and the monotonicity of $G$ imply
\[
    G(f) \leq G(\hat{f} + rH_S) = G(\hat{f}) + rG(H_S).
\]
Symmetrically $G(\hat{f}) \leq G(f + rH_S) = G(f) + rG(H_S)$, so that $\vert G(f) - G(\hat{f}) \vert \leq r G(H_S)$. By taking the limit $r \to \Vert f - \hat{f} \Vert$ we obtain that $G$ is Lipschitz with Lipschitz constant $G(H_S) < \infty$, i.e.
\[
    \vert G(f) - G(\hat{f}) \vert \leq \Vert f - \hat{f} \Vert \cdot G(H_S).
\]

 Thus $G$ can be extended to a Lipschitz functional $\overline{G}$ defined on the closure of $\textup{cone}_{\mathbb{Q}}(\mathcal{H})$, which contains $\textup{cone}(\mathcal{H})$.

 We now verify that $\overline{G}$ is still monotone on $\textup{cone}(\mathcal{H})$. Let $f \geq \hat{f}$ be two functions in $\textup{cone}(\mathcal{H})$, and take any two sequences $\{\frac{1}{p_n} H_{P_n}\}$ and $\{\frac{1}{q_n} H_{Q_n}\}$ in $\textup{cone}_{\mathbb{Q}}(\mathcal{H})$ that converge to $f$ and $\hat{f}$ as $n \to \infty$. For any positive integer $m$, convergence in the sup-norm implies $\frac{1}{p_n} H_{P_n} \geq f - \frac{1}{2m} H_S$ for all large $n$, where $S$ is the experiment with $H_S > 1$ everywhere. Similarly $\frac{1}{q_n} H_{Q_n} \leq \hat{f} + \frac{1}{2m} H_S$. Since $f \geq \hat{f}$, we thus have $\frac{1}{p_n} H_{P_n} \geq \frac{1}{q_n} H_{Q_n} - \frac{1}{m} H_S$ for all large $n$. By monotonicity and additivity of $G$, $G(\frac{1}{p_n} H_{P_n}) \geq G(\frac{1}{q_n} H_{Q_n}) - \frac{1}{m} G(H_S)$, which implies $\overline{G}(f) \geq \overline{G}(\hat{f}) - \frac{1}{m} G(H_S)$ by taking $n \to \infty$. As $m$ is arbitrary, we have shown that $\overline{G}$ is monotonic. 

 We show $\overline{G}$ is additive and satisfies $\overline{G}(a f + b\hat{f}) = a \overline{G}(f) + b \overline{G}(\hat{f})$ for any functions $f, \hat{f} \in \textup{cone}(\mathcal{H})$ and $a, b \in \R_+$. To show this, first suppose $a, b$ are rational numbers. Consider $\{\frac{1}{p_n} H_{P_n}\} \to f$ and $\{\frac{1}{q_n} H_{Q_n}\} \to \hat{f}$ as above, where $f$ need not be bigger than $\hat{f}$. Then the sequence of functions $\{\frac{a}{p_n} H_{P_n} + \frac{b}{q_n} H_{Q_n}\} \in \textup{cone}_{\mathbb{Q}}(\mathcal{H})$ converges to $af + b\hat{f}$. It follows that 
\begin{align*}
\overline{G}(af+b\hat{f}) &= \lim_{n \to \infty} G(\frac{a}{p_n} H_{P_n} + \frac{b}{q_n} H_{Q_n}) \\
&= a \cdot \lim_{n \to \infty} G(\frac{1}{p_n} H_{P_n}) + b \cdot \lim_{n \to \infty} G(\frac{1}{q_n} H_{Q_n}) = a \cdot \overline{G}(f) + b \cdot \overline{G}(\hat{f}). 
\end{align*}
If $a, b$ are real numbers, we can deduce the same result by the Lipschitz property of $\overline{G}$. 

\bigskip

Consider next $V = \textup{cone}(\mathcal{H}) - \textup{cone}(\mathcal{H})$, which is vector subspace of $C(\overline{\R})$. $\overline{G}$ can be further extended to a functional $I \colon V \to \R$, defined as
\[
    I(M_1  - M_2) = \overline{G}(M_1) - \overline{G}(M_2)
\]
for all $M_1,M_2 \in \textup{cone}(\mathcal{H})$. The functional $I$ is well defined and linear because $\overline{G}$ is affine. Moreover, by monotonicity of $\overline{G}$, $I(f) \geq 0$ for any non-negative function $f \in V$.

The following theorem, a generalization of the Hahn-Banach Theorem \citep[see, e.g., Theorem 8.32 in][]{guide2006infinite}, shows that $I$ can be further extended to a positive linear functional on the entire space $C(\overline{\R})$: 
\begin{theorem}[\cite{kantorovich1937moment}]
Let $V$ be a vector subspace of $C(\overline{\R})$ with the property that for every $f \in C(\overline{\R})$ there exists a function $g \in V$ such that $g \geq f$. Then every positive linear functional on $V$ extends to a positive linear functional on $C(\overline{\R})$.
\end{theorem}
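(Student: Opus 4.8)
The plan is to realize this as an instance of the classical Hahn--Banach extension theorem, with the domination hypothesis on $V$ used to build the right sublinear majorant. Write $K = \overline{\R}$, let $I$ denote the given positive linear functional on the subspace $V$, and define $p \colon C(K) \to \R$ by
\[
  p(f) = \inf\{I(g) : g \in V,\ g \geq f\}.
\]
The domination hypothesis guarantees that for every $f$ the set on the right is nonempty, so $p(f) < +\infty$. Establishing that $p$ is a well-defined real-valued sublinear functional that agrees with $I$ on $V$ is the heart of the argument; once this is in place, Hahn--Banach supplies the extension and its positivity follows formally.

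First I would check that $p$ is finite, i.e.\ bounded below. Applying the hypothesis to $-f$ yields $h \in V$ with $h \geq -f$; then any competitor $g \in V$ with $g \geq f$ satisfies $g + h \geq 0$, and since $g + h \in V$ the positivity of $I$ gives $I(g) \geq -I(h)$, whence $p(f) \geq -I(h) > -\infty$. Positive homogeneity $p(\lambda f) = \lambda p(f)$ for $\lambda \geq 0$ follows by rescaling competitors (with positivity of $I$ handling the case $\lambda = 0$, where $p(0)=0$), and subadditivity follows because $g_1 \geq f_1$ and $g_2 \geq f_2$ imply $g_1 + g_2 \geq f_1 + f_2$, so $p(f_1 + f_2) \leq I(g_1) + I(g_2)$ and taking infima gives $p(f_1 + f_2) \leq p(f_1) + p(f_2)$. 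Finally, for $f \in V$ one has $p(f) = I(f)$: the choice $g = f$ gives $p(f) \leq I(f)$, while any $g \geq f$ in $V$ has $g - f \geq 0$ in $V$, so positivity yields $I(g) \geq I(f)$ and hence the reverse inequality. In particular $I \leq p$ on $V$.

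With $p$ sublinear on $C(K)$ and $I \leq p$ on $V$, the Hahn--Banach theorem extends $I$ to a linear functional $\hat I$ on all of $C(K)$ satisfying $\hat I \leq p$ pointwise. It then remains to verify that $\hat I$ is positive. For $f \geq 0$ we have $-f \leq 0$, and since the zero function lies in $V$ and dominates $-f$, we obtain $p(-f) \leq I(0) = 0$. Therefore $\hat I(-f) \leq p(-f) \leq 0$, i.e.\ $\hat I(f) \geq 0$, which is exactly positivity of the extension on $C(\overline{\R})$.

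The step I expect to be most delicate is confirming that $p$ is real-valued, specifically the lower bound $p(f) > -\infty$: this is precisely the point where both halves of the hypothesis are used simultaneously---the domination property applied to $-f$ to produce the witness $h$, and the positivity of $I$ on $V$ to turn $g + h \geq 0$ into the numerical inequality $I(g) \geq -I(h)$. Everything else (sublinearity, agreement with $I$ on $V$, and positivity of the Hahn--Banach extension) is routine once finiteness of $p$ has been secured.
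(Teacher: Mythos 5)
Your proof is correct and complete. Note that the paper itself does not prove this statement: it is quoted as Kantorovich's theorem and cited to the literature (Theorem 8.32 in Aliprantis--Border), so there is no internal proof to compare against. Your argument --- defining the sublinear majorant $p(f) = \inf\{I(g) : g \in V,\ g \geq f\}$, verifying finiteness via the domination hypothesis applied to $-f$ together with positivity of $I$, checking sublinearity and agreement with $I$ on $V$, invoking Hahn--Banach, and deducing positivity of the extension from $p(-f) \leq I(0) = 0$ for $f \geq 0$ --- is precisely the standard proof of the cited result, and every step checks out.
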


\noindent The ``majorization'' condition $g \geq f$ is satisfied because every function in $C(\overline{\R})$ is bounded by some $n$, and $V$ contains the function $nH_{S}$ which takes values greater than $n$ everywhere. 

\bigskip

To summarize, we have obtained a positive linear functional $J$ defined on $C(\overline{\R})$ that extends the original functional $F(H_P) = D(P_0, P_1)$. By the Riesz Representation Theorem for positive linear functionals over spaces of continuous functions on compact sets, we conclude that $J(f) = \int_{\overline{\R}} f(t) \, \dd m(t)$ for some finite measure $m$. Hence $D(P_0, P_1) = F(H_P) = J(H_P)$ is an integral of the R\'enyi divergences of $P$, completing the proof of Theorem \ref{thm:divergences}.

\section{Necessity of the Genericity Assumption}\label{sec:eventualFail}

Here we present examples to show that Theorem~\ref{thm:eventual} does not hold without the genericity assumption.

Consider the experiments $P$ and $Q$ described in Example 2 in \S\ref{sec:Azrieli}. Fix $\alpha = \frac{1}{4}$ and $\beta = \frac{1}{16}$, which satisfy \eqref{eq:AzrieliCondition}. Then by Proposition~\ref{prop:AzrieliConjecture}, $P$ dominates $Q$ in large samples. 

We will perturb these two experiments by adding another signal realization (to each experiment) which strongly indicates the true state is $1$. The perturbed conditional probabilities are given below:
\begin{equation*}
\tilde{P}: \quad
\begin{array}{ccccc}
\hline
\hline
\theta &x_0 & x_1 & x_2 & x_3\\
\hline
0 & \eps & \frac{1}{16} & \frac12 & \frac{7}{16} - \eps \\
1 & 100\eps & \frac{7}{16} & \frac12 & \frac{1}{16} - 100\eps \\
\hline
\end{array} 
\qquad \qquad \qquad
\tilde{Q}: \quad 
\begin{array}{cccc}
\hline
\hline
\theta & y_0 & y_1 & y_2\\
\hline
0 & \eps & \frac14 & \frac34-\eps \\
1 & 100\eps & \frac34 & \frac14-100\eps \\
\hline
\end{array} 
\end{equation*} 

If $\eps$ is a small positive number, then by continuity $\tilde{P}$ still dominates $\tilde{Q}$ in the R\'enyi order. Nonetheless, we show below that $\tilde{P}^{\otimes n}$ does not Blackwell dominate $\tilde{Q}^{\otimes n}$ for any $n$ and $\eps>0$. 

To do this, let $\overline{p} := \frac{100^{n-1}}{100^{n-1}+1}$ be a threshold belief. We will show that a decision maker whose indirect utility function is $(p-\overline{p})^{+}$ strictly prefers $\tilde{Q}^{\otimes n}$ to $\tilde{P}^{\otimes n}$. Indeed, it suffices to focus on posterior beliefs $p > \overline{p}$; that is, the likelihood ratio should exceed  $100^{n-1}$. Under $\tilde{Q}^{\otimes n}$, this can only happen if every signal realization is $y_0$, or all but one signal is $y_0$ and the remaining one is $y_1$. Thus, in the range $p > \overline{p}$, the posterior belief has the following distribution under $\tilde{Q}^{\otimes n}$: 
\[
p = 
\begin{cases}
\frac{100^n}{100^n+1} ~~~~~~\text{w.p.}~~ \frac{1}{2}(100^n+1)\eps^n \\
\frac{3\cdot100^{n-1}}{3\cdot100^{n-1}+1} ~~\text{w.p.}~~ \frac{n}{8}(3\cdot100^{n-1}+1)\eps^{n-1}
\end{cases}
\]
Similarly, under $\tilde{P}^{\otimes n}$ the relevant posterior distribution is
\[
p = 
\begin{cases}
\frac{100^n}{100^n+1} ~~~~~~\text{w.p.}~~ \frac{1}{2}(100^n+1)\eps^n \\
\frac{7\cdot100^{n-1}}{7\cdot100^{n-1}+1} ~~\text{w.p.}~~ \frac{n}{32}(7\cdot100^{n-1}+1)\eps^{n-1}
\end{cases}
\]

Recall that the indirect utility function is $(p-\overline{p})^{+}$. So $\tilde{Q}^{\otimes n}$ yields higher expected payoff than $\tilde{P}^{\otimes n}$ if and only if
\[
\frac{n}{8}(3\cdot 100^{n-1} + 1)\eps^{n-1} \cdot \left(\frac{3\cdot100^{n-1}}{3\cdot100^{n-1}+1} - \overline{p}\right) > \frac{n}{32}(7\cdot 100^{n-1} + 1)\eps^{n-1} \cdot \left(\frac{7\cdot100^{n-1}}{7\cdot100^{n-1}+1} - \overline{p} \right).
\]
That is, 
\[
4(3\cdot 100^{n-1} + 1)\cdot \left(\frac{3\cdot100^{n-1}}{3\cdot100^{n-1}+1} - \frac{100^{n-1}}{100^{n-1}+1}\right) > (7\cdot 100^{n-1} + 1) \cdot \left(\frac{7\cdot100^{n-1}}{7\cdot100^{n-1}+1} - \frac{100^{n-1}}{100^{n-1}+1} \right). 
\]
The LHS is computed to be $\frac{8 \cdot 100^{n-1}}{100^{n-1}+1}$, while the RHS is $\frac{6 \cdot 100^{n-1}}{100^{n-1}+1}$. Hence the above inequality holds, and it follows that $\tilde{P}^{\otimes n}$ does not Blackwell dominate $\tilde{Q}^{\otimes n}$.

\section{Generalization to Unbounded Experiments} \label{sec:unbounded}

In this section we present two generalizations of Theorem~\ref{thm:eventual} to experiments that may have unbounded likelihood ratios. Note that the R\'enyi divergences for an unbounded experiment can still be defined by \eqref{eq:R_t}, \eqref{eq:R_1} and \eqref{eq:Renyi divergence}, so long as we allow these divergences to take the value $+\infty$. 

The first result shows that Theorem~\ref{thm:eventual} hold without change so long as the dominated experiment $Q$ is bounded. 

\begin{theorem}\label{thm:bounded Q}
For a generic pair of experiments $P$ and $Q$ where $Q$ is bounded, the following are equivalent:
\begin{enumerate}
    \item $P$ dominates $Q$ in large samples.
    \item $P$ dominates $Q$ in the R\'enyi order. 
\end{enumerate}
\end{theorem}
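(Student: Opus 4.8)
The plan is to reduce to the already-proved bounded case (Theorem~\ref{thm:eventual}) by approximating the possibly-unbounded dominant experiment $P$ from below, in the Blackwell order, by bounded experiments. The implication (i) $\Rightarrow$ (ii) needs no new idea: the necessity of the R\'enyi order for Blackwell dominance was established in \S\ref{sec:Renyi decision problems} through a family of decision problems, and that argument is insensitive to boundedness—when $R_P^\theta(t)=+\infty$ the relevant inequality $R_P^\theta(t)>R_Q^\theta(t)$ holds trivially, since $Q$ is bounded and hence $R_Q^\theta(t)$ is finite. So I focus on (ii) $\Rightarrow$ (i). First I would record a consequence of genericity at the endpoint $t=\infty$: as $R_P^\theta(t)>R_Q^\theta(t)$ for every finite $t>0$ and both sides increase to their essential-supremum limits, passing to the limit gives $R_P^\theta(\infty)\ge R_Q^\theta(\infty)$, which genericity (distinct essential maxima and minima) upgrades to the strict inequality $R_P^\theta(\infty)>R_Q^\theta(\infty)$. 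In terms of the compact reparametrisation $H_P$ introduced in the proof of Theorem~\ref{thm:divergences}, this says $H_P(t)>H_Q(t)$ for every $t\in[1/2,\infty]$, where $H_P(\infty)$ may equal $+\infty$ while $H_Q$ is continuous and finite.

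Next I would construct the approximants. For each $M>0$ let $P'_M$ be the deterministic garbling of $P$ that leaves untouched every realization with $|\log(\dd P_1/\dd P_0)|\le M$, pools all realizations with $\log(\dd P_1/\dd P_0)>M$ into one symbol, and pools all realizations with $\log(\dd P_1/\dd P_0)<-M$ into another. Each $P'_M$ has likelihood ratios bounded away from $0$ and $\infty$, hence is bounded, and since $P'_M$ is a garbling of $P$ we have $P\succeq P'_M$, so $P^{\otimes n}\succeq (P'_M)^{\otimes n}$ for every $n$. Because coarser garblings are Blackwell-smaller and R\'enyi divergences are Blackwell-monotone (\S\ref{sec:Renyi decision problems}), the map $M\mapsto H_{P'_M}(t)$ is nondecreasing; and a routine Jensen/monotone-convergence computation on the defining integrals shows $H_{P'_M}(t)\uparrow H_P(t)$ for each fixed $t$ as $M\to\infty$ (the pooled atoms' contributions converge to the untruncated ones by the power-mean inequality, and diverge exactly where $H_P(t)=+\infty$).

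A compactness argument then selects a single good truncation. For each $M$ the set $U_M=\{t\in[1/2,\infty]:H_{P'_M}(t)>H_Q(t)\}$ is open (both functions are continuous, $P'_M$ being bounded), the family $\{U_M\}$ is increasing in $M$, and $\bigcup_M U_M=[1/2,\infty]$ because $H_{P'_M}(t)\uparrow H_P(t)>H_Q(t)$ at every $t$. By compactness of $[1/2,\infty]$ a single $U_{M^*}$ already covers it, i.e. $P'_{M^*}$ dominates $Q$ in the R\'enyi order. Enlarging $M^*$ if necessary—which only increases $H_{P'_{M^*}}$ and so preserves all the strict inequalities—I can also arrange that the essential maxima and minima of $P'_{M^*}$ differ from those of $Q$, making $(P'_{M^*},Q)$ a generic pair of bounded experiments. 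Theorem~\ref{thm:eventual} then gives $(P'_{M^*})^{\otimes n}\succeq Q^{\otimes n}$ for all large $n$, and composing with $P^{\otimes n}\succeq (P'_{M^*})^{\otimes n}$ yields $P^{\otimes n}\succeq Q^{\otimes n}$, as required.

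The hard part will be the uniform-in-$t$ control in the truncation step: showing that one finite $M^*$ restores R\'enyi dominance over the whole range $t\in[1/2,\infty]$, including the tail $t\to\infty$ and the loci where $R_P^\theta(t)=+\infty$. The increasing family of open sets together with compactness of $[1/2,\infty]$ is exactly what makes this work and is the crux of the argument; verifying the monotone convergence $H_{P'_M}\uparrow H_P$ through the pooled atoms and checking that genericity survives the final adjustment of $M^*$ are the supporting technical points, while the reduction to Theorem~\ref{thm:eventual} and the composition with the garbling are then immediate.
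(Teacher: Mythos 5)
Your proposal is correct and follows essentially the same route as the paper's proof: truncate $P$ into bounded garblings by pooling the tails of the log-likelihood ratio, use Blackwell monotonicity together with pointwise convergence of the R\'enyi divergences and compactness of the compactified parameter range to find a single truncation that dominates $Q$ in the R\'enyi order (and is generic relative to $Q$), then invoke Theorem~\ref{thm:eventual} and transitivity of the Blackwell order. The differences are cosmetic: the paper runs the compactness step through the finite intersection property of the closed sets $T_B$ where dominance fails rather than your increasing open cover, and it carries out in full the three-case computation ($t>1$, $t=1$, $t\in(0,1)$) of the pointwise convergence $R^\theta_{P_B}(t)\to R^\theta_P(t)$ that you defer as ``routine''---that step, together with the comparison at $t=\infty$, is where most of the technical work in the paper's proof actually lies.
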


To interpret the statement, ``generic'' means (as in the main text) that $\log \frac{\dd P_1}{\dd P_0}$ has different essential maximum and minimum from $\log \frac{\dd Q_1}{\dd Q_0}$. In the current setting $P$ may be unbounded, so that its log-likelihood ratio may have essential maximum $+\infty$ and/or minimum $-\infty$. In those cases the the genericity assumption is automatically satisfied. 

We also reiterate that dominance in the R\'enyi order means the R\'enyi divergences of $P$ and $Q$ are ranked as $R_P^{\theta}(t) > R_Q^{\theta}(t)$ for all $t > 0$ and $\theta \in \{0,1\}$. Since $Q$ is by assumption bounded, $R_Q^{\theta}(t)$ is always finite. Thus the requirement in (ii) is that $R_P^{\theta}(t)$ is either a bigger finite number, or it is $+\infty$. 

\bigskip

Our second result in this section deals with pairs of experiments where both $P$ and $Q$ may be unbounded, but they still have finite R\'enyi divergences. To state the result, we need to generalize the notion of genericity as follows: Say $P$ and $Q$ form a \emph{generic} pair, if for both $\theta = 0$ and $\theta = 1$, 
\begin{equation}\label{eq:unbounded genericity} 
\liminf_{t \to \infty} \vert R_P^{\theta}(t) - R_Q^{\theta}(t) \vert > 0.
\end{equation}
Note that when $P$ and $Q$ are bounded, $R_P^{\theta}(t) \to \max[X^{\theta}]$ and $R_Q^{\theta}(t) \to \max[Y^{\theta}]$ as $t \to \infty$. So in this special case the genericity assumption reduces to the one we introduced in the main text. 

The following result shows that under one extra assumption, Theorem~\ref{thm:eventual} once again extends.
\begin{theorem}\label{thm:unbounded Q}
Suppose $P$ and $Q$ are a generic pair of (possibly unbounded) experiments with finite R\'enyi divergences. Let $(X^\theta), (Y^\theta)$ be the corresponding log-likelihood ratios, and suppose further that their cumulant generating functions satisfy $\sup_{t \in \mathbb{R}} K''_{X^\theta}(t) < \infty$ and $\sup_{t \in \mathbb{R}} K''_{Y^\theta}(t) < \infty$.\footnote{Since $K_{X^0}(t) = K_{X^1}(-1-t)$, it suffices to check the assumptions $\sup_{t \in \mathbb{R}} K''_{X^\theta}(t) < \infty$ and $\sup_{t \in \mathbb{R}} K''_{Y^\theta}(t) < \infty$ for one of the two states.}  Then the following are equivalent:
\begin{enumerate}
    \item $P$ dominates $Q$ in large samples.
    \item $P$ dominates $Q$ in the R\'enyi order. 
\end{enumerate}
\end{theorem}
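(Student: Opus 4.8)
The plan is to follow the architecture of the proof of Theorem~\ref{thm:eventual}, isolating the two places where boundedness of the log-likelihood ratios was used and showing that the new hypotheses---finiteness of the R\'enyi divergences and the uniform bounds $\sup_{t} K''_{X^\theta}(t),\sup_{t} K''_{Y^\theta}(t)<\infty$---together with the limit-based genericity condition \eqref{eq:unbounded genericity} supply adequate replacements. The direction (i)$\Rightarrow$(ii) is unchanged: the decision-problem construction of \S\ref{sec:Renyi decision problems} shows that the R\'enyi order is necessary for strict Blackwell dominance, and the identities around \eqref{eq:Renyi decision problem} require only that the relevant integrals (hence the R\'enyi divergences) be finite, which is assumed. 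So the work is entirely in proving (ii)$\Rightarrow$(i).

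For (ii)$\Rightarrow$(i) I would again invoke Theorem~\ref{prop:reduction} and Lemma~\ref{lem:signs}, both of which hold for arbitrary experiments with no boundedness, to reduce the claim to the convolution inequalities \eqref{eq:F1G1} and \eqref{eq:F0G0}, i.e.\ to the probability comparison \eqref{eq:F1G1 restated} for all $a\geq 0$. The key preliminary observation is that finiteness of the R\'enyi divergences in both states is equivalent, via $K_{X^\theta}(t)=t\,R_P^\theta(t+1)$ and the reflection $K_{X^0}(t)=K_{X^1}(-1-t)$, to Cram\'er's condition that $K_{X^\theta}$ and $K_{Y^\theta}$ are finite on all of $\R$. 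This validates the exponential-tilting construction of Lemma~\ref{lem:large-deviation-lower-bound} at every threshold. The role previously played by the support bound $b^2$ is now played by the tilted variance: in that proof the variance of the tilted variable $\hat X$ equals $K''_X(t)$ evaluated at the tilting parameter, so replacing the estimate $\Var(\hat X)\leq b^2$ by $\Var(\hat X)=K''_X(t)\leq V:=\sup_{t}K''_{X^\theta}(t)$ reproduces verbatim the same lower bound with $b^2$ replaced by $V$. Consequently Proposition~\ref{prop:large-deviations-compare} and the middle-range Chebyshev estimate of Case~3 (where $\Var(X^1)=K''_{X^1}(0)\leq V$) go through with $b^2$ replaced by $V$ and with a repetition count $n_0$ that is still uniform in $a$.

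The genuinely new ingredient is an extension of Lemma~\ref{lem:eta} producing a single $\eta\in(0,1)$ for which the Fenchel-conjugate gap $K^*_{Y^\theta}(a)-\eta>K^*_{X^\theta}(a+\eta)$ holds on the \emph{entire} half-line $[\,\mathbb{E}[X^\theta]-\eta,\ \max[Y^\theta])$, where $\max[Y^\theta]$ may now be $+\infty$. On any compact sub-interval the argument of Lemma~\ref{lem:eta} applies unchanged. For the tail I would use \eqref{eq:unbounded genericity} together with R\'enyi dominance to extract $\delta>0$ and $T_0$ with $R_P^\theta(t)-R_Q^\theta(t)\geq\delta$ for all $t\geq T_0$, hence $K_{X^\theta}(s)-K_{Y^\theta}(s)\geq\delta s$ for all large $s$. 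Letting $\tilde s$ be the maximizer in $K^*_{X^\theta}(a+\eta)=\tilde s(a+\eta)-K_{X^\theta}(\tilde s)$ and comparing with the suboptimal value $\tilde s a-K_{Y^\theta}(\tilde s)\leq K^*_{Y^\theta}(a)$ gives
\[
   K^*_{X^\theta}(a+\eta)-K^*_{Y^\theta}(a)\ \leq\ \tilde s\,\eta-\bigl(K_{X^\theta}(\tilde s)-K_{Y^\theta}(\tilde s)\bigr)\ \leq\ \tilde s(\eta-\delta).
\]
Choosing $\eta<\delta$ and noting that $\tilde s\to\infty$ as $a\to\infty$ (we only need the tail argument when $\max[Y^\theta]=+\infty$, and then R\'enyi dominance forces $\max[X^\theta]=+\infty$, so $K'_{X^\theta}$ is unbounded), the right-hand side tends to $-\infty$: the gap not only persists but widens, which combined with the compact part yields the uniform $\eta$.

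With these replacements the case split of \S\ref{sec:sufficiency} proceeds as before, the only structural change being that the trivial Case~1 ($a\geq\max[Y^1]$) is vacuous when $\max[Y^1]=\infty$ and is absorbed into the large-deviation range now covered by the extended Lemma~\ref{lem:eta}; Case~4 near $a=0$ is handled, as before, by applying the generalized Proposition~\ref{prop:large-deviations-compare} to $-X^1$ and $-Y^1$, whose cumulant generating functions inherit the same finiteness and second-derivative bounds through the reflection identity. I expect the main obstacle to be precisely this tail estimate for the conjugates: producing a \emph{single} $\eta$ valid uniformly as $a\to\infty$. This is exactly the point where the weaker, limit-based genericity condition \eqref{eq:unbounded genericity} (rather than a mere comparison of essential maxima) is indispensable, and where finiteness of $K_{X^\theta}$ on all of $\R$ is needed so that $K^*_{X^\theta}(a+\eta)$ stays finite and the tilting argument remains valid for arbitrarily large thresholds.
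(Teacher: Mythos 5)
Your proposal is correct and mirrors the paper's own proof: the same reduction via Theorem~\ref{prop:reduction} and Lemma~\ref{lem:signs} to the convolution inequality \eqref{eq:F1G1 restated}, the same replacement of $b^2$ by $\sup_{t}K''$ in the large-deviation lower bound (using $\Var(\hat X)=K''_X(t)$ for the tilted variable), and the same two-regime extension of Lemma~\ref{lem:eta}, in which the compact range is handled by the original continuity argument and the tail by the linear growth $K_{X^\theta}(s)-K_{Y^\theta}(s)\geq\delta s$ extracted from the genericity condition \eqref{eq:unbounded genericity}. The only cosmetic difference is that the paper's tail chain produces the uniform $-\eta$ directly from the bound $K_{X^\theta}(t)-K_{Y^\theta}(t)>\tfrac{\epsilon}{2}(t+1)$ with $\eta<\epsilon/2$, whereas you obtain it by noting that $\tilde s(\eta-\delta)\to-\infty$ and gluing with the compact part, which amounts to the same estimate.
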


We note that if a random variable $X$ is bounded between $-b$ and $b$, then its R\'enyi divergences are finite, and $K_X''(t) \leq b^2$ for every $t$.\footnote{The latter follows by showing $K_X''(t)$ to be the variance of some random variable $\hat{X}$ that shares the same support as $X$. See Proposition~\ref{prop:unbounded large-deviations-compare} and its proof.} Thus Theorem~\ref{thm:unbounded Q} is another strict generalization of Theorem~\ref{thm:eventual} beyond bounded experiments.

More generally, the following is a sufficient condition for Theorem~\ref{thm:unbounded Q} to apply. Roughly speaking, we require the log-likelihood ratios $X^{\theta}, Y^{\theta}$ to have \emph{tails decaying faster than some Gaussian distribution}. 

\begin{lemma}\label{lem:Kx second derivative}
Let $X$ be a random variable whose distribution admits a density $h(x)$ that is positive and twice continuously differentiable. Suppose there exists $\epsilon > 0$ and $M > 0$ such that the following holds:
\[
\frac{\partial^2 \log h(x)}{\partial x^2} \leq -\epsilon ~~~ \text{for all} ~~ \vert x \vert > M. 
\]
Then the cumulant generating function $K_X(t)$ is finite for every $t$, and $\sup_{t \in \mathbb{R}} K_X''(t) < \infty$. 
\end{lemma}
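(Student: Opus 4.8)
The plan is to prove the two conclusions separately, both exploiting that the hypothesis $(\log h)''(x)\le-\epsilon$ for $|x|>M$ makes $\log h$ be $\epsilon$-strongly concave in the tails, and hence forces $h$ to decay at least as fast as the Gaussian density $e^{-\frac\epsilon2 x^2}$.

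\emph{Finiteness of $K_X$.} First I would integrate the differential inequality. Writing $\ell=\log h$, for $x>M$ one gets $\ell'(x)\le \ell'(M)-\epsilon(x-M)$ and then $\ell(x)\le \ell(M)+\ell'(M)(x-M)-\frac\epsilon2(x-M)^2$, with the symmetric bound for $x<-M$. Thus $h(x)\le C\exp(-\frac\epsilon2 x^2+c|x|)$ in both tails for suitable constants, so that $\int e^{tx}h(x)\,dx<\infty$ for every $t\in\R$ (the quadratic term dominates any linear $tx$, and the integral over the compact set $[-M,M]$ is finite by continuity). Hence $M_X(t)=\E{e^{tX}}$ is finite and positive on all of $\R$, which makes $K_X=\log M_X$ real-analytic; in particular $K_X''$ is finite and continuous everywhere. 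This settles the first claim.

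\emph{Uniform bound on $K_X''$.} I would use the exponential-tilting identity already established in the proof of Lemma~\ref{lem:large-deviation-lower-bound}: for each $t$ the tilted law $\hat\nu_t$ with density proportional to $e^{tx}h(x)$ satisfies $K_{\hat X_t}(s)=K_X(s+t)-K_X(t)$, whence $K_X''(t)=\Var(\hat X_t)$. Writing $\hat\nu_t\propto e^{-V_t}$ with $V_t(x)=-tx-\log h(x)$, the hypothesis gives $V_t''=-(\log h)''\ge\epsilon$ on $\{|x|>M\}$ \emph{for every} $t$, because the tilt only adds the affine term $-tx$. The task is therefore to bound $\Var(\hat X_t)$ uniformly in $t$. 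The idea is to compare $V_t$ to a genuinely strongly convex potential by a $t$-independent bounded perturbation. Concretely, I would fix $M'\ge M$ large and construct a single function $U\in C^2(\R)$, \emph{not depending on $t$}, with $U''\ge\epsilon$ everywhere and $U=-\log h$ on $\{|x|\ge M'\}$. Then $V_t=(-tx+U)+\phi$ with $\phi:=-\log h-U$ supported on $[-M',M']$, bounded, and independent of $t$. The reference measure $\propto e^{-(-tx+U)}=e^{tx-U}$ has log-density with second derivative $-U''\le-\epsilon$, so it is $\epsilon$-strongly log-concave and, by the Brascamp--Lieb inequality, assigns variance at most $1/\epsilon$ to the identity. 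Since $\hat\nu_t$ is this measure reweighted by the bounded factor $e^{-\phi}$, the Holley--Stroock perturbation principle yields
\[
   K_X''(t)=\Var(\hat X_t)\le e^{\,\mathrm{osc}\,\phi}\cdot\frac1\epsilon ,
\]
and as $\mathrm{osc}\,\phi$ does not depend on $t$, this bound is uniform, giving $\sup_t K_X''(t)<\infty$.

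The main obstacle is the construction of the $t$-independent strongly convex majorant $U$, and more conceptually the realization that it is available. Because $-\log h$ is already $\epsilon$-convex outside $[-M,M]$, building $U$ amounts to a Hermite-type interpolation on $[-M',M']$ matching the values and slopes of $-\log h$ at $\pm M'$ by a function whose second derivative is at least $\epsilon$; such an interpolant exists once the endpoint slopes are compatible with the secant slope, which I would guarantee by taking $M'$ large and using that $(\log h)'(x)\to\mp\infty$ as $x\to\pm\infty$ (a consequence of the tail concavity). The deeper point is that a strong-convexity-outside-a-compact-set condition alone does \emph{not} give a uniform variance bound --- a tilted density could in principle be bimodal with far-separated modes --- and what rescues us is precisely that the tilt parameter $t$ enters $V_t$ only through the affine term $-tx$, so the discrepancy $\phi$ between $V_t$ and a globally strongly convex potential can be chosen independent of $t$. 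This is what allows Holley--Stroock to deliver a constant that does not blow up with $t$, and it is the step I expect to require the most care to state rigorously.
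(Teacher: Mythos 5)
Your overall strategy is sound but genuinely different from the paper's. The paper also reduces the problem to a $t$-free quantity, but by a more elementary device: using two i.i.d.\ copies $X_1,X_2$, it writes $K_X''(t)$ as a weighted average (with $t$-dependent weights) of $\tfrac12 D(s)$, where $D(s)=\mathbb{E}\left[(X_1-X_2)^2\,\middle|\,X_1+X_2=s\right]$, and then proves $\sup_s D(s)<\infty$ by direct integral estimates from the tail condition on $(\log h)''$ --- no functional inequalities needed. Your route --- $K_X''(t)=\Var(\hat X_t)$ for the tilted law, an $\epsilon$-strongly log-concave reference measure handled by Brascamp--Lieb, and a $t$-independent bounded perturbation handled by Holley--Stroock --- is a legitimate alternative, and your closing observation (that uniformity in $t$ comes precisely from the tilt entering only through an affine term, so $\phi$ can be chosen once and for all) is exactly the right conceptual point; it plays the same role as the paper's observation that the conditional law given $X_1+X_2$ is invariant under tilting.

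There is, however, one genuine error in your construction of $U$: demanding $U''\ge\epsilon$ with the \emph{same} $\epsilon$, together with $U=-\log h$ outside a compact set, is impossible for some admissible densities. Take $g=-\log h$ with $g(0)=g'(0)=0$ and $g''=\epsilon-\psi$, where $\psi\ge 0$ is smooth, even, supported in $(-M,M)$, and $\int\psi=2s>0$. Then $(\log h)''=-\epsilon$ on $\{|x|>M\}$, so the hypothesis of the lemma holds; but $W:=g-\tfrac{\epsilon}{2}x^2$ has $W'\equiv+s$ on the left tail and $W'\equiv-s$ on the right tail, so any $U$ as you require would make $W_U=U-\tfrac{\epsilon}{2}x^2$ globally convex and equal to $W$ outside $[-M',M']$, forcing the contradiction $s=W_U'(-M')\le W_U'(M')=-s$. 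Note that this example satisfies your cited justification $(\log h)'(x)\to\mp\infty$, which shows that criterion is about the wrong function: what must be checked is the slope/secant compatibility for the quadratically corrected potential $W$. The repair is simple: aim for $U''\ge\epsilon'$ with any fixed $\epsilon'\in(0,\epsilon)$. Then $W=g-\tfrac{\epsilon'}{2}x^2$ has $W''\ge\epsilon-\epsilon'>0$ on the tails, so $W'(\pm M')\to\pm\infty$ at least linearly, while splitting $\int_{-M'}^{M'}W'$ over the two tails and the middle shows the secant $\frac{W(M')-W(-M')}{2M'}$ is at most $\tfrac12 W'(M')+O(1)$ and at least $\tfrac12 W'(-M')-O(1)$; hence the Hermite compatibility condition $W'(-M')\le\frac{W(M')-W(-M')}{2M'}\le W'(M')$ holds for all large $M'$, the interpolant exists, and your final bound becomes $K_X''(t)\le e^{\mathrm{osc}\,\phi}/\epsilon'$, uniform in $t$. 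With this modification your proof is correct.
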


Note that $\frac{\partial^2 \log h(x)}{\partial x^2} \leq -\epsilon$ implies the standard assumption that the density $h$ is (strictly) log-concave. The requirement that the same $\epsilon$ works for all large $x$ makes our assumption stronger, and in particular rules out densities such as $h_1(x) = c_1 \cdot e^{-\lambda_1 \vert x \vert}$ or $h_2(x) = c_2 \cdot e^{-\lambda_2 \vert x \vert^{1.99}}$.\footnote{It is easy to see that the random variable with density $h_1(x)$ does not have finite R\'enyi divergences everywhere. It can also be shown that the random variable with density $h_2(x)$ has a cumulant generating function with $K_X''(t) \to \infty$ as $t \to \infty$. Thus, it seems difficult to substantially weaken the condition in Lemma \ref{lem:Kx second derivative} while maintaining the same result.} Nonetheless, any Gaussian density $h$ satisfies the assumption regardless of how big the variance is, and so does any other density that decays faster at infinity. Hence Theorem \ref{thm:unbounded Q} is applicable to a broad class of unbounded experiments. 

\bigskip

Below we prove Theorem~\ref{thm:bounded Q}, Theorem~\ref{thm:unbounded Q} and Lemma~\ref{lem:Kx second derivative} in turn. 

\subsection{Proof of Theorem~\ref{thm:bounded Q}}
That (i) implies (ii) follows from the same argument as in \S\ref{sec:Renyi decision problems}. To prove (ii) implies (i), the idea is to garble $P$ into a bounded experiment $\tilde{P}$ that still has higher R\'enyi divergences than $Q$. By Theorem~\ref{thm:eventual}, $\tilde{P}^{\otimes n}$ Blackwell dominates $Q^{\otimes n}$ for all large $n$. But since $P$ Blackwell dominates $\tilde{P}$, $P^{\otimes n}$ also Blackwell dominates $\tilde{P}^{\otimes n}$. Therefore, by transitivity, we would be able to conclude that $P^{\otimes n}$ Blackwell dominates $Q^{\otimes n}$ for all large $n$. 

To construct such a $\tilde{P}$, we first note that by taking $t \to \infty$, $R_P^1(t) > R_Q^1(t)$ implies $\max[X^1] \geq \max[Y^1]$ where $X^1$ and $Y^1$ are the log-likelihood ratios. Similarly $\max[X^0] \geq \max[Y^0]$. By the genericity assumption, both comparisons are in fact strict. We can thus find a pair of positive numbers $b_1 \in (\max[Y^1], \max[X^1])$ and $b_0 \in (\max[Y^0], \max[X^0]) = (-\min[Y^1], -\min[X^1])$. These numbers will be fixed throughout. 

Now take any positive number $B \geq \max\{b_1, b_0\}$. We construct a garbling of $P$, denoted $P_B$, as follows: All signal realizations under $P$ that induce a log-likelihood ratio $\log \frac{\dd P_1}{\dd P_0}$ greater than $B$ (if any) are garbled into a single signal $\overline{s}$, and similarly all realizations with log-likelihood ratio less than $-B$ are garbled into another signal $\underline{s}$. The remaining signal realizations under $P$ (with log-likelihood ratio in $[-B, B]$) are unchanged under $P_B$. It is easy to see that not only is $P_B$ a garbling of $P$, but more generally $P_{B}$ is a garbling of $P_{B'}$ whenever $B' > B$. Thus, as $B$ increases, the experiment $P_B$ becomes more informative in the Blackwell sense. 

Let $R_{P_B}^{\theta}(t)$ denote the R\'enyi divergences of $P_B$. Since the R\'enyi order extends the Blackwell order, we know that as $B$ increases, $R_{P_B}^{\theta}(t)$ also increases for each $\theta$ and $t$, with an upper bound of $R_{P}^{\theta}(t)$. In fact, we can show that for fixed $\theta$ and $t$, 
\[
\lim_{B \to \infty} R_{P_B}^{\theta}(t) = R_P^{\theta}(t). 
\]
The proof is technical and deferred to later. Assuming this, we next show that for sufficiently large $B$, $R_{P_B}^{\theta}(t) > R_{Q}^{\theta}(t)$ holds for \emph{all} $t \geq 1/2$ (thus for all $t > 0$, by \eqref{eq:t-1-t}). This will prove $P_B$ as the desired garbling $\tilde{P}$ that dominates $Q$ in the R\'enyi order, which will complete the proof of the theorem.\footnote{Note that $B \geq \max\{b_1, b_2\}$ ensures $P_B$ and $Q$ is a generic pair, so we can apply Theorem~\ref{thm:eventual} to deduce $P_B^{\otimes n} \succeq Q^{\otimes n}$ for large $n$. Therefore $P^{\otimes n} \succeq P_B^{\otimes n} \succeq Q^{\otimes n}$.} 
 
\bigskip
 
To this end, fix $\theta = 1$, and define for each $B$ a set
\[
T_B = \{t \geq 1/2: R_{P_B}^{1}(t) \leq R_{Q}^{1}(t)\}.
\]
By continuity of the R\'enyi divergences, $T_B$ is a closed set.  Moreover, as $t \to \infty$ we have $R_{P_B}^{1}(t) \to \max[X_B^1]$, where $X_B^1$ is the log-likelihood ratio of state $1$ to state $0$, distributed under the experiment $P_B$ and true state $1$. By the assumption $B \geq b_1$ and the construction of $P_B$, we have that 
\[
\mathbb{P}[X_B^1 \geq b_1] = \mathbb{P}[X^1 \geq b_1],
\]
which is positive because $b_1 < \max[X^1]$. Thus $\max[X_B^1] \geq b_1$. It follows that 
\[
\lim_{t \to \infty} R_{P_B}^{1}(t) \geq b_1 > \max[Y^1] = \lim_{t \to \infty} R_{Q}^{1}(t). 
\]
Hence $R_{P_B}^{1}(t) > R_{Q}^{1}(t)$ for all large $t$ and $T_B$ is a bounded set. 

We have shown that each $T_B$ is compact set. Note also that because $R_{P_B}^{1}(t)$ increases in $B$, the set $T_B$ shrinks as $B$ increases. Therefore, by the finite intersection property, either there exists some $t$ that belongs to every $T_B$, or $T_B$ is the empty set for all large $B$. The former is impossible because $R_{P_B}^1(t) \leq R_Q^1(t)$ for all $B$ would imply $R_P^1(t) \leq R_Q^1(t)$ in the limit, contradicting the assumption in (ii). 

We thus conclude that $T_B$ must be empty for all large $B$. In other words, when $B$ is large $R_{P_B}^{1}(t) > R_{Q}^{1}(t)$ holds for all $t \geq \frac{1}{2}$. A symmetric argument shows that $R_{P_B}^{0}(t) > R_{Q}^{0}(t)$ holds for all $t \geq \frac{1}{2}$, completing the proof. 

\bigskip

It remains to show $\lim_{B \to \infty} R_{P_B}^{\theta}(t) = R_P^{\theta}(t)$. We again fix $\theta = 1$ for easier exposition. Consider the following three cases:

\paragraph{Case 1: $t > 1$.} We recall that $R_{P_B}^{1}(t) = \frac{1}{t-1} \log \mathbb{E}[\ee^{(t-1)X_B^1}]$. So we need to show 
\[
\lim_{B \to \infty} \mathbb{E}[\ee^{(t-1)X_B^1}] = \mathbb{E}[\ee^{(t-1)X^1}].
\]
Since $R_{P_B}^{1}(t) \leq R_P^1(t)$ for each $B$, the LHS above is weakly smaller than the RHS. On the other hand, by construction $X_B^1$ coincides with $X^1$ conditional on being in the interval $[-B, B]$. As the exponential function is always positive, we have
\begin{align*}
\mathbb{E}[\ee^{(t-1)X_B^1}] &\geq \Pr{\vert X_B^1 \vert \leq B} \cdot \mathbb{E}[\ee^{(t-1)X_B^1} ~\mid~ \vert X_B^1 \vert \leq B] \\ &= \Pr{\vert X^1 \vert \leq B} \cdot \mathbb{E}[\ee^{(t-1)X^1} ~\mid~ \vert X^1 \vert \leq B].
\end{align*}
Taking the limit as $B \to \infty$, we obtain $\lim_{B \to \infty} \mathbb{E}[\ee^{(t-1)X_B^1}] \geq \mathbb{E}[\ee^{(t-1)X^1}]$, which proves they are equal. 

\paragraph{Case 2: $t = 1$.} Here we have $R_{P_B}^{1}(1) = \mathbb{E}[X_B^1]$. So we need to show 
\[
\lim_{B \to \infty} \mathbb{E}[X_B^1] = \mathbb{E}[X^1].
\]
Once again we already know the LHS is weakly smaller, so it suffices to show the opposite inequality. By construction, $X_B^1$ coincides with $X^1$ on the interval $[-B, B]$. Other than this part, there is probability $\mathbb{P}[X^1 > B]$ that signal $\overline{s}$ occurs under the experiment $P_B$; when this happens we also have $X_B^1 > B$, which contributes a positive amount to $\mathbb{E}[X_B^1]$. 

With remaining probability $\mathbb{P}[X^1 < -B]$, the signal $\underline{s}$ occurs, and the induced log-likelihood ratio $X_B^1$ is at least $\log \mathbb{P}[X^1 < -B]$ (since this event occurs with probability at most one under state $0$). Here the contribution to $\mathbb{E}[X_B^1]$ can be negative, but is no less than $\mathbb{P}[X^1 < -B] \cdot \log \mathbb{P}[X^1 < -B]$. 

Summarizing, for each $B$ we have
\[
\mathbb{E}[X_B^1] \geq \Pr{\vert X^1 \vert \leq B} \cdot \mathbb{E}[X^1 ~\mid~ \vert X^1 \vert \leq B] ~~+~~ \mathbb{P}[X^1 < -B] \cdot \log \mathbb{P}[X^1 < -B]. 
\]
Taking the limit as $B \to \infty$, the first summand on the RHS converges to $\mathbb{E}[X^1]$. In addition, the second summand vanishes because $\mathbb{P}[X^1 < -B] \to 0$ and $\lim_{x \to 0} x \log x = 0$. We thus obtain $\lim_{B \to \infty} \mathbb{E}[X_B^1] \geq \mathbb{E}[X^1]$ as desired. 

\paragraph{Case 3: $t \in (0,1)$.} In this case we will again show
\[
\lim_{B \to \infty} \mathbb{E}[\ee^{(t-1)X_B^1}] = \mathbb{E}[\ee^{(t-1)X^1}].
\]
Since $R_{P_B}^{1}(t) \leq R_P^1(t)$, and $R_{P_B}^{1}(t) = \frac{1}{t-1} \log \mathbb{E}[\ee^{(t-1)X_B^1}]$, the negative factor $\frac{1}{t-1}$ implies that the LHS above is now weakly \emph{bigger} than the RHS. 

To prove it is smaller, we proceed as in Case 2. With probability $\mathbb{P}[X^1 > B]$ the signal $\overline{s}$ occurs, and the induced log-likelihood ratio $X_B^1$ is \emph{at least} $\log \mathbb{P}[X^1 > B]$. As $t-1$ is negative here, the contribution of this part to $\mathbb{E}[\ee^{(t-1)X_B^1}]$ is \emph{at most}
\[
\mathbb{P}[X^1 > B] \cdot \mathbb{E}[\ee^{(t-1)\log \mathbb{P}[X^1 > B]}] = (\mathbb{P}[X^1 > B])^t.
\]
Similarly the contribution of the signal $\underline{s}$ is at most 
$(\mathbb{P}[X^1 < -B])^t$. We thus have
\[
\mathbb{E}[\ee^{(t-1)X_B^1}] \leq \Pr{\vert X^1 \vert \leq B} \cdot \mathbb{E}[\ee^{(t-1)X^1} ~\mid~ \vert X^1 \vert \leq B] ~~+~~ (\mathbb{P}[X^1 > B])^t ~~+~~ (\mathbb{P}[X^1 < -B])^t. 
\]
As $B \to \infty$, both $(\mathbb{P}[X^1 > B])^t$ and $(\mathbb{P}[X^1 < -B])^t$ vanish since $t > 0$. We therefore conclude $\lim_{B \to \infty} \mathbb{E}[\ee^{(t-1)X_B^1}] \leq \mathbb{E}[\ee^{(t-1)X^1}]$, completing the whole proof. 

\subsection{Proof of Theorem~\ref{thm:unbounded Q}}
We only need to prove (ii) implies (i). Here we will follow the arguments in \S\ref{sec:sufficiency} and make necessary modifications. Since Lemma~\ref{lem:signs} remains valid, it suffices to prove \eqref{eq:F1G1 restated}, i.e., 
\[
\Pr{X^1_1 + \dots + X^1_n \leq na} \leq \Pr{Y^1_1 + \dots + Y^1_n \leq na}, ~~~\text{ for all } a \geq 0.
\]
The analysis of the four cases in \S\ref{sec:sufficiency} relies on Lemma~\ref{lem:eta} and Proposition~\ref{prop:large-deviations-compare}. We will show later that Lemma~\ref{lem:eta} continues to hold even if $P$ and $Q$ are unbounded (but have finite R\'enyi divergences). On the other hand, Proposition~\ref{prop:large-deviations-compare} cannot hold as stated, but we do have the following modified version where $b^2$ is replaced by $\sup_{t \in \mathbb{R}} K_X''(t)$:

\begin{proposition}\label{prop:unbounded large-deviations-compare}
Let $X$ and $Y$ be random variables with finite cumulant generating functions $K_X(t)$ and $K_Y(t)$. Further let $X_1, \dots, X_n$, $Y_1, \dots, Y_n$ be i.i.d.\ copies of $X$ and $Y$ respectively. Suppose $a \geq \mathbb{E}[Y]$, and $\eta > 0$ satisfies $K_Y^*(a)-\eta>K_X^*(a+\eta)$. Then for all 
$
  n \geq 4(1+\eta)\eta^{-3} \cdot \sup_{t \in \mathbb{R}} K_X''(t),
$ 
it holds that
\[
    \Pr{X_1 + \cdots + X_n > na} \geq \Pr{Y_1 + \cdots + Y_n > na}.
\]
\end{proposition}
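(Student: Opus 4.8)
The plan is to follow the proof of the bounded version, Proposition~\ref{prop:large-deviations-compare}, essentially verbatim, tracking exactly where the support bound $b^2$ entered and replacing it by $V := \sup_{t \in \R} K_X''(t)$. The whole argument rests on the lower bound of Lemma~\ref{lem:large-deviation-lower-bound}, and in that proof $b^2$ appears in only two places, both times as a bound on a variance: once as $\Var[X] \leq b^2$ in the low-threshold case, and once as $\Var[\hat X] \leq b^2$ for the exponentially tilted variable $\hat X$ (with density $\dd\hat\nu/\dd\nu = \ee^{tx - K_X(t)}$) in the Chebyshev step. The first is immediate, since $\Var[X] = K_X''(0) \leq V$. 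The second is the key observation: the cumulant generating function of the tilt is $K_{\hat X}(s) = K_X(s+t) - K_X(t)$, whence $\Var[\hat X] = K_{\hat X}''(0) = K_X''(t) \leq V$. Thus the role of $b^2$ is played uniformly by $V$, regardless of the tilting parameter $t$.

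First I would restate and reprove Lemma~\ref{lem:large-deviation-lower-bound} under the hypotheses that $K_X$ is finite on all of $\R$ and $V < \infty$, in the form
\[
\Pr{X_1 + \cdots + X_n > na} \geq \ee^{-n \cdot K_X^*(a+\eta)}\left(1 - \frac{4V}{n\eta^2}\right)
\]
whenever $K_X^*(a+\eta) < \infty$. The two cases of the original proof carry over without change. In the case $a \geq \mathbb{E}[X] - \eta/2$, finiteness of the moment generating function everywhere guarantees that $K_X'(t) = a + \eta/2$ has a nonnegative finite solution, so the Cram\'er change of measure is well-defined even when $X$ is unbounded; the Chebyshev estimate then yields the factor $1 - 4V/(n\eta^2)$ through $\Var[\hat X] = K_X''(t) \leq V$. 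In the case $a < \mathbb{E}[X] - \eta/2$ the direct Chebyshev bound uses $\Var[X] \leq V$ together with the nonnegativity of $K_X^*$.

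Granting the modified lemma, the Proposition follows exactly as in the bounded case. Applying the lemma to $X$ and then invoking the hypothesis $K_Y^*(a) - \eta > K_X^*(a+\eta)$ gives
\[
\Pr{X_1 + \cdots + X_n > na} \geq \ee^{-n \cdot K_Y^*(a)}\, \ee^{n\eta}\left(1 - \frac{4V}{n\eta^2}\right) \geq \ee^{-n \cdot K_Y^*(a)}(1+\eta)\left(1 - \frac{4V}{n\eta^2}\right),
\]
and for $n \geq 4V(1+\eta)\eta^{-3}$ the factor $(1+\eta)\bigl(1 - 4V/(n\eta^2)\bigr)$ is at least $1$, so the right-hand side is bounded below by $\ee^{-n \cdot K_Y^*(a)}$. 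Since $a \geq \mathbb{E}[Y]$ and $K_Y$ is finite, the Chernoff bound gives $\Pr{Y_1 + \cdots + Y_n > na} \leq \ee^{-n \cdot K_Y^*(a)}$, and combining the two inequalities yields the claim.

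The substantive work here is conceptual rather than computational: recognizing that the only feature of boundedness used in the large-deviation lower bound is a uniform variance bound across all exponential tilts of $X$, and that this is precisely what $\sup_t K_X''(t)$ encodes. The main point to verify with care is that for unbounded $X$ the tilting construction remains valid for every relevant $t$ and the relevant $K_X^*$ values are finite; this is exactly what finiteness of $K_X$ on all of $\R$ secures, so no genuinely new estimate is required beyond the substitution $b^2 \rightsquigarrow V$.
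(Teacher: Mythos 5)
Your proposal is correct and follows essentially the same route as the paper: both reduce the statement to a version of Lemma~\ref{lem:large-deviation-lower-bound} with $b^2$ replaced by $\sup_{t\in\R}K_X''(t)$, the key point being that the variance of the exponentially tilted variable $\hat X$ equals $K_X''(t)$ and is therefore uniformly bounded by this supremum, after which the Chernoff-bound comparison goes through verbatim. Your derivation of $\Var[\hat X]=K_{\hat X}''(0)=K_X''(t)$ via the tilted cumulant generating function is just a repackaging of the paper's direct computation, so there is no substantive difference.
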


Using Lemma~\ref{lem:eta} and Proposition~\ref{prop:unbounded large-deviations-compare}, we can replicate the results in Cases 1, 2 and 4 in \S\ref{sec:sufficiency}. Specifically, let $M = \max \{\sup_{t \in \mathbb{R}} K_{X^1}''(t), \sup_{t \in \mathbb{R}} K_{Y^1}''(t) \}$, then for all $n \geq 4M(1+\eta)\eta^{-3}$ the inequality $\Pr{X^1_1 + \dots + X^1_n \leq na} \leq \Pr{Y^1_1 + \dots + Y^1_n \leq na}$ holds for values of $a$ outside of the interval $(\mathbb{E}[Y] + \eta, \mathbb{E}[X] - \eta)$ in Case 3. 

Turning to $a \in (\mathbb{E}[Y] + \eta, \mathbb{E}[X] - \eta)$, we can still use the Chebyshev inequality to deduce 
\[
\Pr{X^1_1 + \dots + X^1_n \leq na} \leq \frac{\Var[X^1]}{n\eta^2} = \frac{K_{X^1}''(0)}{n\eta^2} \leq \frac{M}{n\eta^2}. 
\]
Similarly we also have 
\[
\Pr{Y^1_1 + \dots + Y^1_n \leq na} \geq 1 - \frac{\Var[Y^1]}{n\eta^2} \geq 1 - \frac{M}{n\eta^2}. 
\]
Thus $\Pr{X^1_1 + \dots + X^1_n \leq na} \leq \Pr{Y^1_1 + \dots + Y^1_n \leq na}$ holds for all $n \geq 2M\eta^{-2}$, and hence for all $n \geq 4M(1+\eta)\eta^{-3}$. This then implies that $P^{\otimes n}$ Blackwell dominates $Q^{\otimes n}$ for all $n \geq 4M(1+\eta)\eta^{-3}$. 

Below we supply the proofs for Lemma~\ref{lem:eta} (for unbounded experiments) and Proposition~\ref{prop:unbounded large-deviations-compare}.

\begin{proof}[Proof of Lemma~\ref{lem:eta} for unbounded experiments]
We note that the second part $K_{Y^\theta}^*(a-\eta) < K_{X^\theta}^*(a) - \eta$ continues to hold. This is because, by the same argument as in the case of bounded experiments, $K_{Y^\theta}^*(a) < K_{X^\theta}^*(a)$ holds for all $a$ in the \emph{compact} interval $[0, \mathbb{E}[Y^{\theta}]]$. Thus by (uniform) continuity, we can ``squeeze in'' a small positive $\eta$ without changing the inequality. 

The first part of Lemma~\ref{lem:eta} also holds so long as $\max[Y^\theta]$ is finite, in which case the range of $a$ under consideration is again compact. If instead $\max[Y^\theta] = \infty$, we use a new argument that takes advantage of the genericity assumption. Note that by assumption, $R_P^{\theta}(t) - R_Q^{\theta}(t)$ is positive for each $\theta$ and $t$. Given this, the genericity assumption \eqref{eq:unbounded genericity} further implies this difference is bounded away from zero as $t \to \infty$. That is, there exists small $\epsilon > 0$ and large $T > 1$ such that
\[
R_P^\theta(t) - R_Q^\theta(t) > \epsilon ~~~\text{for all} ~~ \theta \in \{0,1\}, ~t > T. 
\]
Since $K_X^{\theta}(t) = t R_P^\theta(t+1)$, we deduce
\begin{equation}\label{eq:unbounded Kx-Ky}
K_X^{\theta}(t) - K_Y^{\theta}(t) > \epsilon t > \frac{\epsilon}{2}(t+1) ~~~\text{for all} ~~ \theta \in \{0,1\}, ~t > T. 
\end{equation}

We can now prove the first part of Lemma~\ref{lem:eta}. Define $\delta > 0$ by $K_{X^\theta}'(T) = \mathbb{E}[X^\theta]+\delta$. The original proof of Lemma~\ref{lem:eta} yields that for all sufficiently small $\eta > 0$, 
\[
K_{Y^\theta}^*(a) - \eta > K_{X^\theta}^*(a + \eta)~~~\text{holds for }~~ \mathbb{E}[X^\theta]-\eta \leq a \leq \mathbb{E}[X^\theta]+\delta.
\]
Note that $\mathbb{E}[X^\theta]+\delta$ is finite, so the range of $a$ considered above is compact, enabling us to use the original argument. 
We claim that by choosing $\eta < \epsilon/2$, where $\epsilon$ is defined earlier, the same inequality holds even if $a$ is bigger than $\mathbb{E}[X^\theta]+\delta$. For this define $\hat{t}$ by $K_{X^\theta}'(\hat{t}) = a + \eta$, then $\hat{t} > T$ by the convexity of $K_X$. Therefore, by \eqref{eq:unbounded Kx-Ky},
\begin{align*}
K_{X^\theta}^*(a+\eta) &= \hat{t}(a+\eta) - K_{X^\theta}(\hat{t}) \\ 
&< \hat{t}(a+\eta) - K_{Y^\theta}(\hat{t}) - \frac{\epsilon}{2}(\hat{t}+1) \\
&< \hat{t}(a+\eta) - K_{Y^\theta}(\hat{t}) - \eta(\hat{t}+1) \\
&= \hat{t}a - K_{Y^\theta}(\hat{t}) - \eta \\
&\leq K_{Y^\theta}^*(a) - \eta.
\end{align*}
This completes the proof of Lemma~\ref{lem:eta} for unbounded experiments. 
\end{proof}

\begin{proof}[Proof of Proposition~\ref{prop:unbounded large-deviations-compare}]
Following the original proof of Proposition~\ref{prop:large-deviations-compare}, we just need to show a modified version of Lemma~\ref{lem:large-deviation-lower-bound} (with $\sup_{t \in \mathbb{R}} K_X''(t)$ replacing $b^2$):
\[
    \Pr{X_1+\cdots+X_n > na} \geq \ee^{-n\cdot K_X^*(a+\eta)}\left(1-\frac{4 \cdot \sup_{t \in \mathbb{R}} K_X''(t)}{n\eta^2}\right).
\]
This follows the same proof as in \S\ref{sec:large-deviations}, except that in applying the Chebyshev inequality, we now use 
\[
\Var[\hat{S}_n] = n \Var[\hat{X}] = n \cdot K_X''(t) \leq n \cdot \sup_{\hat{t} \in \mathbb{R}} K_X''(\hat{t})
\]
instead of $\Var[\hat{S}_n] \leq nb^2$. The key equality $\Var[\hat{X}] = K_X''(t)$ holds because 
\[
\Var[\hat{X}] = \mathbb{E}[\hat{X}^2] - \mathbb{E}[\hat{X}]^2 =  \frac{\mathbb{E}[X^2\ee^{tX}]}{\mathbb{E}[\ee^{tX}]} - \left(\frac{\mathbb{E}[X\ee^{tX}]}{\mathbb{E}[\ee^{tX}]}\right)^2 = K_X''(t).
\]
Hence the result.
\end{proof}

\subsection{Proof of Lemma~\ref{lem:Kx second derivative}}
We first prove $K_X$ is everywhere finite, i.e., $\log \mathbb{E}[\ee^{tX}]$ is finite for every $t$. Using the density $h(x)$, we can write 
\[
\mathbb{E}[\ee^{tX}] = \int_{-\infty}^{\infty} h(x) \ee^{tx} \,\dd x = \int_{-\infty}^{\infty} \ee^{tx + l(x)} \,\dd x, 
\]
where we define $\ell(x) = \log h(x)$. Since by assumption $\ell''(x) \leq -\epsilon$ for $\vert x \vert > M$, it is easy to show $\ell(x) \leq -\frac{\epsilon}{4} x^2$ as $\vert x \vert \to \infty$. Hence the above integral is finite. 

\bigskip

To prove $K_X''$ is bounded, we begin with the formula
\[
K_X''(t) = \frac{\mathbb{E}[X^2\ee^{tX}] \cdot \mathbb{E}[\ee^{tX}] - \mathbb{E}[X\ee^{tX}]^2}{\mathbb{E}[\ee^{tX}]^2}. 
\]
Let $X_1, X_2$ be i.i.d.\ copies of $X$. Then the denominator above is $\mathbb{E}[\ee^{tX_1}] \cdot \mathbb{E}[\ee^{tX_2}] = \mathbb{E}[\ee^{t(X_1+X_2)}]$. The numerator can be rewritten as
\begin{align*}
&\mathbb{E}[X_1^2\ee^{tX_1}] \cdot \mathbb{E}[\ee^{tX_2}] - \mathbb{E}[X_1\ee^{tX_1}] \cdot \mathbb{E}[X_2\ee^{tX_2}] \\
=& \mathbb{E}[(X_1^2 - X_1X_2) \cdot \ee^{t(X_1+X_2)}] \\
=& \mathbb{E}[\frac{X_1^2 - X_1X_2 + X_2^2 - X_1X_2}{2} \cdot \ee^{t(X_1+X_2)}] \\
=& \mathbb{E}[\frac{(X_1-X_2)^2}{2} \cdot \ee^{t(X_1+X_2)}],
\end{align*}
where the penultimate step uses the symmetry between $X_1$ and $X_2$. Define 
\[
D(s) = \mathbb{E}[(X_1 - X_2)^2 \mid X_1+X_2 = s].
\]
Then we have shown that
\[
K_X''(t) = \frac{\frac{1}{2}\mathbb{E}[D(X_1+X_2) \cdot \ee^{t(X_1+X_2)}]}{\mathbb{E}[\ee^{t(X_1+X_2)}]}. 
\]
Thus, in order to show $K_X''$ is bounded, it suffices to show $D(s)$ is bounded as $s$ varies. 

\bigskip

Recall that by assumption $\ell''(x) \leq -\epsilon$ for $\vert x \vert > M$. We will show (with proof deferred to later) there exists $S > 2M$, such that 
\begin{equation}\label{eq:unbounded ell(x)}
\ell'(x) - \ell'(s-x) \leq -\frac{\epsilon}{2}(2x-s) ~~~ \text{for all} ~~ s > S, ~x > \frac{s}{2}. 
\end{equation}
Note that \eqref{eq:unbounded ell(x)} in particular implies $\ell'(x) - \ell'(s-x) \leq -1$ for $x > \frac{s}{2} + C$, with $C = \epsilon^{-1}$. Given this, we can show $D(s)$ is bounded.

Without loss consider $s \geq 0$. We use the density $h(x)$ to write 
\begin{equation}\label{eq:unbounded Delta(s)}
D(s) = \frac{\int_{-\infty}^{\infty} h(x) h(s-x) (2x-s)^2 \, \dd x}  {\int_{-\infty}^{\infty} h(x) h(s-x) \, \dd x} = \frac{\int_{s/2}^{\infty} h(x) h(s-x) (2x-s)^2 \, \dd x}  {\int_{s/2}^{\infty} h(x) h(s-x) \, \dd x}
\end{equation}
Since $D(s)$ is continuous, it suffices to prove it is bounded when $s > S$, where $S$ is given earlier. We now break the integral in \eqref{eq:unbounded Delta(s)} into two parts, with cutoff $s/2 + 2C$:
\begin{align*}
D(s) &= \frac{\int_{s/2}^{s/2 + 2C} h(x) h(s-x) (2x-s)^2 \, \dd x}{\int_{s/2}^{\infty} h(x) h(s-x) \, \dd x} ~~+~~ \frac{\int_{s/2 + 2C}^{\infty} h(x) h(s-x) (2x-s)^2 \, \dd x}{\int_{s/2}^{\infty} h(x) h(s-x) \, \dd x}.
\end{align*}
The first term is bounded by $16C^2$, which is the maximum value of $(2x-s)^2$ for $x \in [s/2, s/2+2C]$. To bound the second term, we rewrite it as
\begin{equation}\label{eq:unbounded Delta(s) upper bound}
\int_{s/2 + 2C}^{\infty} \frac{\ee^{l(x) + l(s-x)}}{\int_{s/2}^{\infty}\ee^{l(y) + l(s-y)} \,\dd y} \cdot (2x-s)^2 \,\dd x. 
\end{equation}
As $l'(y) - l'(s-y) \leq -1$ for $y \geq s/2 + C$, we have $l(y) + l(s-y) \geq x - y + l(x) + l(s-x)$ for all $x \geq y \geq s/2 + C$. Thus 
\[
\int_{s/2}^{\infty}\ee^{l(y) + l(s-y)} \,\dd y \geq \int_{s/2 + C}^{x}\ee^{l(y) + l(s-y)} \,\dd y \geq \int_{s/2 + C}^{x}\ee^{x-y + l(x) + l(s-x)} \,\dd y = (\ee^{x-s/2-C} - 1) \ee^{l(x) + l(s-x)}. 
\]
Plugging back into \eqref{eq:unbounded Delta(s) upper bound}, the second term contributing to $D(s)$ is bounded above by 
\[
\int_{s/2 + 2C}^{\infty} \frac{1}{\ee^{x-s/2-C} - 1} \cdot (2x-s)^2 \,\dd x = \int_{C}^{\infty} \frac{1}{\ee^{u} - 1} \cdot (2u+2C)^2 \,\dd u,
\]
where we used change of variable from $x$ to $u = x - s/2 - C$. Since the RHS is a finite constant independent of $s$, we conclude that $D(s)$ is bounded even as $s \to \infty$. 

\bigskip

It remains to prove \eqref{eq:unbounded ell(x)}. We write the difference on the LHS as $\int_{s-x}^{x} \ell''(u) \,\dd u$. If $s-x > M$, the result follows from the fact that $\ell''(u) \leq -\epsilon \leq -\frac{\epsilon}{2}$ for every $u$ in the range of integration. Suppose instead that $s-x \leq M$, thus $x \geq s - M$. In this case because $\ell''(u)$ can only be positive 
for $u \in [-M, M]$, we have
\begin{align*}
\int_{s-x}^{x} \ell''(u) \,\dd u &\leq -\epsilon(2x - s - 2M) + \int_{-M}^{M} \vert \ell''(u) \vert \,\dd u \\
=& -\epsilon(x - s/2) - \epsilon(x - s/2 - 2M) + \int_{-M}^{M} \vert \ell''(u) \vert \,\dd u \\
&\leq -\epsilon(x - s/2) - \epsilon(s/2 - 3M) + \int_{-M}^{M} \vert \ell''(u) \vert \,\dd u \\
&\leq -\epsilon(x - s/2).
\end{align*}
The penultimate inequality uses $x \geq s-M$, whereas the last inequality holds when $s$ is sufficiently large (since $\int_{-M}^{M} \vert \ell''(u) \vert \,\dd u$ is finite by the assumption that $h$ is positive and twice continuously differentiable). This completes the proof.

\section{Necessary Condition for Large Sample Dominance with Many States}\label{sec:many states}

In this section we show that the R\'enyi order can be generalized to more than two states to yield a general necessary condition for large sample dominance. Consider $k+1$ states $\theta \in \{0, 1, \dots, k\}$ and two experiments $P = (\Omega, (P_{\theta}))$, $Q = (\Xi, (Q_{\theta}))$ revealing information about these states. Conditioning on $\theta = 0$, we consider the moment generating function of the log-likelihood ratio vector $(\frac{\dd P_0}{\dd P_1}, \dots, \frac{\dd P_0}{\dd P_k})$, given by 
\begin{equation}\label{eq:multidimensional mgf}
M_{X^0}(t) = \int_{\Omega} \ee^{\sum_{j = 1}^{k} t_j \log \frac{\dd P_0(\omega)}{\dd P_j(\omega)}} \, \dd P_0(\omega)
\end{equation}
with $t = (t_1, \dots, t_k) \in \R^k$. Similarly define $M_{Y^0}(t)$ for the experiment $Q$. 

By the same argument as in \S\ref{sec:Renyi decision problems} (see the derivation of \eqref{eq:Renyi decision problem}), $M_{X^0}(t)$ \emph{would be} the ex-ante expected payoff from observing $P$, in a decision problem with uniform prior and indirect utility function 
\[
v(p) = (k+1)p_0^{1+t_1+\dots+t_k} \cdot p_1^{-t_1} \cdots p_k^{-t_k},
\]
where $p = (p_0, p_1, \dots, p_k)$ represents the belief about the $k+1$ states. If the function $v(p)$ were convex in $p$, then it is indeed an indirect utility function. Blackwell dominance of $P$ over $Q$ then requires $M_{X^0}(t) \geq M_{Y^0}(t)$. Since the moment generating function is raised to the $n$-th power when $n$ i.i.d.\ samples are drawn, we would be able to conclude that $M_{X^0}(t) \geq M_{Y^0}(t)$ also has to hold if $P$ dominates $Q$ in large samples. If instead $v(p)$ were concave, then $-v(p)$ is an indirect utility function, leading to the reverse ranking between the moment generating functions.

We can characterize those parameters $t = (t_1, \dots, t_k)$ that make the function $v(p)$ globally convex/concave. To make the result easy to state, we make the variables symmetric and consider a function of the form
\[
v(p) = (k+1) p_0^{\alpha_0} \cdot p_1^{\alpha_1} \cdots p_k^{\alpha_k}
\]
with $\alpha_0 + \alpha_1 + \dots + \alpha_k = 1$. 

\begin{lemma}\label{lem:v(p) convex or concave}
Consider the function $v(p)$ defined above, over the domain $p \in int(\Delta^k)$. Suppose $\alpha_0 + \alpha_1 + \dots + \alpha_k = 1$ and $\alpha_0 > 0$. Then $v(p)$ is convex in $p$ if and only if $\alpha_1, \dots, \alpha_k$ are all non-positive. Conversely, $v(p)$ is concave in $p$ if and only if $\alpha_1, \dots, \alpha_k$ are non-negative. Moreover, the convexity/concavity is strict when $\alpha_1, \dots, \alpha_k$ are strictly negative/positive.
\end{lemma}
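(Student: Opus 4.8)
The plan is to characterize convexity and concavity of $v$ on the open simplex through the sign of its Hessian restricted to tangent directions, and then to reduce this to a single tractable quadratic form. Since $v = (k+1)\prod_{i} p_i^{\alpha_i}$ is smooth and strictly positive on $\mathrm{int}(\Delta^k)$, convexity (resp.\ concavity) is equivalent to $d^\top H d \geq 0$ (resp.\ $\leq 0$) for every tangent direction $d$, i.e.\ every $d \in \R^{k+1}$ with $\sum_i d_i = 0$. A direct differentiation gives $\partial_i v = v\,\alpha_i/p_i$ and $\partial_i\partial_j v = v\bigl(\alpha_i\alpha_j/(p_ip_j) - \delta_{ij}\alpha_i/p_i^2\bigr)$, so that, writing $w_i = d_i/p_i$, one finds $d^\top H d = v\cdot Q(w)$ with $Q(w) = \bigl(\sum_i \alpha_i w_i\bigr)^2 - \sum_i \alpha_i w_i^2$. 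Because $\sum_i \alpha_i = 1$, Euler's relation yields $Hp = 0$, and correspondingly $Q(w+c\mathbf 1) = Q(w)$ for every scalar $c$; this degeneracy is exactly what lets me trade the awkward tangent constraint for a clean normalization.

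The next step is to exploit this invariance. Replacing $d$ by $d - (d_0/p_0)\,p$ changes neither $d^\top H d$ (as $Hp = 0$) nor the value of the quadratic form, while it zeroes the $0$-th coordinate of $w$. Hence the signature of $Q$ on the tangent space coincides with that of the reduced form $\tilde Q(w_1,\dots,w_k) = \bigl(\sum_{i\geq1}\alpha_i w_i\bigr)^2 - \sum_{i\geq1}\alpha_i w_i^2$ on all of $\R^k$. Thus $v$ is convex iff $\tilde Q \geq 0$ everywhere, concave iff $\tilde Q \leq 0$ everywhere, with strictness corresponding to definiteness. This removes $\alpha_0$ from the form while keeping the constraint $\sum_{i\geq1}\alpha_i = 1-\alpha_0 < 1$ (where $\alpha_0>0$ enters) in reserve.

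For the ``if'' directions I would argue directly on $\tilde Q$. If all $\alpha_i\leq 0$ for $i\geq 1$, then $\tilde Q(w) = \bigl(\sum_{i\geq1}\alpha_i w_i\bigr)^2 + \sum_{i\geq1}(-\alpha_i)w_i^2 \geq 0$, strictly positive for $w\neq 0$ when the $\alpha_i$ are strictly negative, giving convexity (strict when appropriate). If all $\alpha_i\geq 0$, the weighted Cauchy--Schwarz inequality $\bigl(\sum_{i\geq1}\alpha_i w_i\bigr)^2 \leq \bigl(\sum_{i\geq1}\alpha_i\bigr)\bigl(\sum_{i\geq1}\alpha_i w_i^2\bigr)$ together with $\sum_{i\geq1}\alpha_i = 1-\alpha_0$ yields $\tilde Q \leq -\alpha_0\sum_{i\geq1}\alpha_i w_i^2 \leq 0$, strict when all $\alpha_i>0$, giving concavity. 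For the ``only if'' directions I would instead restrict $v$ to the two-dimensional faces obtained by varying only $p_0$ and $p_j$ with their sum $a$ held fixed, where $v$ is proportional to $(a-x)^{\alpha_0}x^{\alpha_j}$. Convexity/concavity of this one-variable restriction reduces, after clearing denominators and setting $r = p_j/p_0 > 0$, to the sign of the quadratic $h(r) = \alpha_0(\alpha_0-1)r^2 - 2\alpha_0\alpha_j r + \alpha_j(\alpha_j-1)$. The limit $r\to 0^+$ forces $\alpha_j(\alpha_j-1)\geq 0$ (convex) or $\leq 0$ (concave); in the concave case this already gives $\alpha_j\geq 0$, and in the convex case it leaves the two possibilities $\alpha_j\leq 0$ or $\alpha_j\geq 1$.

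The main obstacle is precisely excluding the spurious branch $\alpha_j \geq 1$ in the convex direction: the diagonal/limit conditions alone do not rule it out, and indeed \emph{without} the hypothesis $\alpha_0>0$ the form $\tilde Q$ can be positive semidefinite with some $\alpha_j\geq 1$, so this is exactly where $\alpha_0>0$ must be used. Assuming $\alpha_j\geq 1$, the $r\to\infty$ limit of $h$ forces $\alpha_0(\alpha_0-1)\geq 0$, hence $\alpha_0\geq 1$; evaluating $h$ at its vertex $r^* = \alpha_j/(\alpha_0-1)$ (or, when $\alpha_0=1$, noting that $h$ is affine with negative slope $-2\alpha_j<0$) reduces $h(r^*)\geq 0$ to $\alpha_0+\alpha_j\leq 1$, contradicting $\alpha_0\geq1$ and $\alpha_j\geq1$. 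Therefore $\alpha_j\leq 0$ for every $j\geq 1$, completing necessity. I expect the vertex bookkeeping here, and the careful justification of the reduction $Q\to\tilde Q$ via the radial degeneracy $Hp=0$, to be the only delicate points; the remaining steps are routine sign checks.
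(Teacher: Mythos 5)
Your proof is correct, and while it starts from the same computation as the paper---both reduce everything to the sign of the Hessian quadratic form $Q(w)=\bigl(\sum_{i}\alpha_i w_i\bigr)^2-\sum_i \alpha_i w_i^2$ with $w_i=x_i/p_i$---the way you settle the sign analysis genuinely differs. Your key extra device is the homogeneity observation $Hp=0$, equivalently $Q(w+c\mathbf{1})=Q(w)$, which lets you replace the form on tangent directions by the reduced form $\tilde Q(w_1,\dots,w_k)=\bigl(\sum_{i\ge 1}\alpha_iw_i\bigr)^2-\sum_{i\ge1}\alpha_iw_i^2$ on all of $\R^k$; your check that nonzero tangent directions correspond to nonzero reduced vectors is exactly the delicate point, and you handle it correctly. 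This buys you a trivial convexity direction: when $\alpha_1,\dots,\alpha_k\le 0$ the reduced form is manifestly a sum of squares, whereas the paper needs a Cauchy--Schwarz trick with total weight $1+\sum_{j\ge1}(-\alpha_j)=\alpha_0$ to absorb the $\alpha_0$-term; the concavity direction is essentially the same weighted Cauchy--Schwarz in both proofs. For necessity the paper is more economical: a single evaluation of the form in the direction $(1,-1,0,\dots,0)$ at the point $p_0=\alpha_0 x$, $p_1=\alpha_1 x$ (resp.\ letting $p_1\to 0$) refutes convexity whenever some $\alpha_j>0$ (resp.\ concavity whenever some $\alpha_j<0$) in one stroke. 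You instead restrict $v$ to two-dimensional faces and analyze the one-variable quadratic $h(r)=\alpha_0(\alpha_0-1)r^2-2\alpha_0\alpha_j r+\alpha_j(\alpha_j-1)$ on $(0,\infty)$; this is more systematic but forces the additional vertex computation to exclude the spurious branch $\alpha_j\ge 1$, a case the paper's targeted point evaluation never even encounters. In compensation, your route makes transparent precisely where the hypothesis $\alpha_0>0$ enters the necessity argument, and your treatment of strictness (definiteness of $\tilde Q$, rather than the paper's equality analysis in Cauchy--Schwarz) is also sound.
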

\noindent The proof of this lemma is deferred to the end of the section. Note that unlike the case of two states, there are situations where $v(p)$ is neither convex nor concave. 

\bigskip

By rewriting $\alpha_j = - t_j$ for $1 \leq j \leq k$, we obtain the following necessary condition for Blackwell dominance in large samples. Say the experiments $P$ and $Q$ form a generic pair, if for every pair of states $i \neq j$, the maximum and minimum of $\log \frac{\dd P_i}{\dd P_j}$ differ from those of $\log \frac{\dd Q_i}{\dd Q_j}$. 

\begin{proposition}\label{prop:many states necessary condition} 
Suppose $P$ and $Q$ are a generic pair of bounded experiments for $k+1$ states. If $P$ Blackwell dominates $Q$ in large samples, then the following conditions hold:\footnote{We exclude $t = \{\mathbf{0}\}$ from the conditions because $M_X(\mathbf{0}) = M_Y(\mathbf{0}) = 1$ always holds.} 
\begin{enumerate}
    \item For all $t \in \R_{+}^k\backslash \{\mathbf{0}\}$, $M_{X^0}(t) > M_{Y^0}(t)$ and  symmetrically $M_{X^i}(t) > M_{Y^i}(t)$ if we define the moment generating functions for true state $i$ analogously to \eqref{eq:multidimensional mgf};
    \item For all $t \in \R_{-}^k \backslash \{\mathbf{0}\}$ such that $\sum_{j=1}^{k} t_j > -1$, $M_{X^0}(t) < M_{Y^0}(t)$ and symmetrically $M_{X^i}(t) < M_{Y^i}(t)$ for $1 \leq i \leq k$; 
    \item For every pair of states $i \neq j$, the Kullback-Leibler divergence between $P_i$ and $P_j$ exceeds the divergence between $Q_i$ and $Q_j$:
    \[
        \int_{\Omega} \log \frac{\dd P_i(\omega)}{\dd P_j(\omega)} \, \dd P_i(\omega) > \int_{\Xi} \log \frac{\dd Q_i(\xi)}{\dd Q_j(\xi)} \, \dd Q_i(\xi). 
    \]
\end{enumerate}
\end{proposition}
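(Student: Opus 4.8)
The plan is to realize each moment generating function $M_{X^0}(t)$ as the ex-ante expected payoff of $P$ in the decision problem whose indirect utility is $v(p)=(k+1)p_0^{1+\sum_j t_j}\prod_{j=1}^k p_j^{-t_j}$, generalizing the derivation of \eqref{eq:Renyi decision problem}. Writing $\bar P=\frac{1}{k+1}\sum_\theta P_\theta$ and changing variables from signals to posterior beliefs gives $M_{X^0}(t)=\int v\,\dd\pi$, where $\pi$ is the posterior distribution induced by $P$. Since $\log M_{X^0}$ is the cumulant generating function of the log-likelihood-ratio vector under $P_0$, it is additive across products, so $M^{P^{\otimes n}}_{X^0}(t)=\big(M^P_{X^0}(t)\big)^n$ and likewise $\int v\,\dd\pi^{(n)}=\big(M^P_{X^0}(t)\big)^n$. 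By Lemma~\ref{lem:v(p) convex or concave}, $v$ is convex exactly when $t\in\R_+^k$ and concave when $t\in\R_-^k$ with $\sum_j t_j>-1$ (so the exponent $\alpha_0=1+\sum_j t_j$ is positive). Feeding these convex/concave $v$ into the definition \eqref{eq:def-blackwell} of the Blackwell order, and using $P^{\otimes n}\succeq Q^{\otimes n}$ for large $n$, yields the weak inequalities $M^P_{X^0}(t)\ge M^Q_{X^0}(t)$ on $\R_+^k$ and $M^P_{X^0}(t)\le M^Q_{X^0}(t)$ on the stated part of $\R_-^k$.

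The crux is upgrading these to strict inequalities, and two observations drive this. First, Blackwell dominance restricts to subsets of states: if $Q_\theta=\sigma_*P_\theta$ for all $\theta$ via a single garbling $\sigma$, then the same $\sigma$ witnesses Blackwell dominance of the subexperiment $(P_\theta)_{\theta\in S}$ over $(Q_\theta)_{\theta\in S}$ for every $S\subseteq\{0,\dots,k\}$. Since the $n$-fold product of a subexperiment is the subexperiment of the $n$-fold product, large-sample dominance of $P$ over $Q$ descends to large-sample dominance of every subexperiment. Second, genericity forces strictness of the product dominance: if $P^{\otimes n}\succeq Q^{\otimes n}$ were not strict, the two posterior distributions would coincide, giving $\big(M^P_{X^0}\big)^n=\big(M^Q_{X^0}\big)^n$ and hence $M^P_{X^0}=M^Q_{X^0}$ as functions of $t$; restricting to $t=c\,e_i$ would equate the laws of $\log\frac{\dd P_0}{\dd P_i}$ under $P_0$ and of $\log\frac{\dd Q_0}{\dd Q_i}$ under $Q_0$, contradicting that their essential maxima differ. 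Therefore $P^{\otimes n}\succ Q^{\otimes n}$ for all large $n$.

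Combining the two observations yields strictness on the full punctured orthant. For $t$ with support exactly $J\neq\varnothing$, the value $M_{X^0}(t)$ depends only on the subexperiment on states $\{0\}\cup J$, and there $t$ is an interior point of the positive orthant $\R_{++}^J$, where Lemma~\ref{lem:v(p) convex or concave} makes the associated $v$ strictly convex. Applying strict product dominance of this (still generic) subexperiment, together with the fact that a strict mean-preserving spread raises the integral of any strictly convex function, gives $M^P_{X^0}(t)>M^Q_{X^0}(t)$; this proves (i), and the concave analogue using $-v$ proves (ii). Condition (iii) is the statement that the Kullback–Leibler divergence $R_1(P_i\Vert P_j)$ strictly exceeds $R_1(Q_i\Vert Q_j)$; since the pairwise subexperiment $(P_i,P_j)$ dominates $(Q_i,Q_j)$ in large samples and forms a generic pair, Theorem~\ref{thm:eventual} gives it strict R\'enyi dominance at every parameter $t>0$, in particular at $t=1$.

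The main obstacle I expect is precisely the strictness on the boundary faces of the orthant, where $v$ fails to be strictly convex on the full simplex. The resolution—passing to the subexperiment on $\{0\}\cup\mathrm{supp}(t)$, where the same point becomes interior and $v$ becomes strictly convex, and bootstrapping genericity into strict product dominance—is the heart of the argument; the remainder is the bookkeeping of the decision-problem/MGF correspondence and a direct appeal to Lemma~\ref{lem:v(p) convex or concave}.
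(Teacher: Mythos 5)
Your proposal is correct and follows essentially the same route as the paper's own argument: the MGF/decision-problem correspondence generalizing \eqref{eq:Renyi decision problem}, Lemma~\ref{lem:v(p) convex or concave} for the convex/concave regimes, restriction to the subexperiment on $\{0\}\cup\mathrm{supp}(t)$ (where $v$ becomes strictly convex) combined with genericity to upgrade to strict inequalities, and the pairwise binary comparison via Theorem~\ref{thm:eventual} for condition (iii). The only difference is that you spell out why genericity forces the product dominance to be strict (via equality of MGFs implying equality of the laws of the log-likelihood ratios, contradicting the differing essential maxima), a step the paper asserts without detail in its footnote; this is a useful elaboration rather than a different approach.
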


To understand Proposition \ref{prop:many states necessary condition}, note from \eqref{eq:multidimensional mgf} that when $t_j$ are \emph{all} positive, a bigger value of $M_{X^0}(t)$ indicates higher likelihood ratios $\frac{\dd P_0}{\dd P_j}$ between state $0$ and \emph{every} other state $j$, when state $0$ is the true state. It is intuitive that in this case $M_{X^0}(t) > M_{Y^0}(t)$ corresponds to $P$ being (on average) a more informative experiment than $Q$.\footnote{To prove the strict inequality $M_{X^0}(t) > M_{Y^0}(t)$, suppose that $t_1, \dots, t_l$ are positive whereas $t_{l+1}, \dots, t_{k}$ are zero, for some $1 \leq l \leq k$. Let $\tilde{P} = (\Omega, (P_0, \dots, P_l))$ be the restriction of the experiment $P$ to the first $l+1$ states; similarly define $\tilde{Q}$. Then $P^{\otimes n} \succeq Q^{\otimes n}$ implies $\tilde{P}^{\otimes n} \succeq \tilde{Q}^{\otimes n}$, which must in fact be a strict comparison by the genericity assumption. Therefore, as the indirect utility function $\tilde{v}(p_0, \dots, p_l) = (k+1)p_0^{1+t_1+\dots+t_l} \cdot p_1^{-t_1} \cdots p_k^{-t_l}$ is \emph{strictly} convex on the smaller belief space $\Delta^l$ (Lemma \ref{lem:v(p) convex or concave}), the ex-ante expected payoff $M_{X^0}(t)$ must be strictly higher than $M_{Y^0}(t)$.} This is the content of part (i), which generalizes the comparison of R\'enyi divergences $R_P^{\theta}(t) > R_Q^{\theta}(t)$ in the two state case, for $t > 1$. 

Conversely, part (ii) says that when $t_j$ are all negative (subject to the extra condition $\sum_j t_j > -1$), informativeness is captured by the reverse ranking $M_{X^0}(t) < M_{Y^0}(t)$. In this case, the smaller value of $M_{X^0}(t)$ actually indicates higher likelihood ratios $\frac{\dd P_0}{\dd P_j}$ under true state $0$. This part generalizes the comparison $R_P^{\theta}(t) > R_Q^{\theta}(t)$ for $t \in (0, 1)$. 

Finally, part (iii) directly imposes the R\'enyi comparison $R_P^{\theta}(1) > R_Q^{\theta}(1)$ when it is applied to every pair of states. 

We conjecture that the set of necessary conditions identified in Proposition \ref{prop:many states necessary condition} are also sufficient for large sample Blackwell dominance; see \S\ref{sec:blackwell_lit} for discussion of the difficulties. 

\bigskip

Below we supply the proof of Lemma \ref{lem:v(p) convex or concave}:

\begin{proof}[Proof of Lemma \ref{lem:v(p) convex or concave}]
The Hessian matrix of $v(\cdot)$ at $p$ is computed as
\[
Hess_v(p) = v(p) \times
\left( \begin{array}{ccc} \frac{\alpha_0(\alpha_0-1)}{p_0^2} & \frac{\alpha_0\alpha_1}{p_0p_1} & \dots \\ \frac{\alpha_0\alpha_1}{p_0p_1} & \frac{\alpha_1(\alpha_1-1)}{p_1^2} & \dots \\
\dots & \dots & \dots 
\end{array} \right).
\]
For any direction $(x_0, x_1, \dots, x_k)$, the directional second derivative of $v(\cdot)$ at $p$ is thus 
\begin{equation}\label{eq:v(p) directional second derivative}
(x_0, x_1, \dots) \cdot \left( \begin{array}{ccc} \frac{\alpha_0(\alpha_0-1)}{p_0^2} & \frac{\alpha_0\alpha_1}{p_0p_1} & \dots \\ \frac{\alpha_0\alpha_1}{p_0p_1} & \frac{\alpha_1(\alpha_1-1)}{p_1^2} & \dots \\
\dots & \dots & \dots 
\end{array} \right) \cdot \left(\begin{array}{c} x_0 \\ x_1 \\ \dots \end{array}\right) ~~~=~~~ \left(\sum_{i = 0}^{k} \frac{\alpha_i x_i}{p_i}\right)^2 - \sum_{i = 0}^{k} \frac{\alpha_i x_i^2}{p_i^2},
\end{equation}
where for simplicity we have ignored the positive factor $v(p)$ as it does not affect the sign.

We first use this to show that if $\alpha_1 > 0$ (or any $\alpha_j > 0$), then the function $v(p)$ is \emph{not} convex for $p \in int(\Delta^k)$. Indeed, consider the direction $(1, -1, 0, 0, \dots, 0)$, which maintains $p \in int(\Delta^k)$. The directional second derivative can be computed as 
\[
\frac{\alpha_0(\alpha_0-1)}{p_0^2} - \frac{2\alpha_0\alpha_1}{p_0p_1} + \frac{\alpha_1(\alpha_1-1)}{p_1^2}. 
\]
Suppose $p_0 = \alpha_0 x$, $p_1 = \alpha_1 x$ for some small positive number $x$, and $p_2, p_3, \dots$ are arbitrary. Then the above second derivative simplifies to $-\frac{(\alpha_0+\alpha_1)}{\alpha_0\alpha_1x^2} < 0$. 
Thus $v(p)$ is not convex along this direction. 

Suppose instead $\alpha_1, \dots, \alpha_k \leq 0$, we will show $v(p)$ is convex. For this it suffices to show the RHS of \eqref{eq:v(p) directional second derivative} is non-negative. Indeed, by the Cauchy-Schwartz inequality, 
\begin{align*}
&\left(\left(\sum_{i = 0}^{k} \frac{\alpha_i x_i}{p_i}\right)^2 + \frac{-\alpha_1x_1^2}{p_1^2} + \cdots + \frac{-\alpha_kx_k^2}{p_k^2}\right) \cdot (1 + (-\alpha_1) + \dots + (-\alpha_k)) \\
\geq &\left(\sum_{i = 0}^{k} \frac{\alpha_i x_i}{p_i} + \frac{-\alpha_1x_1}{p_1} + \cdots + \frac{-\alpha_kx_k}{p_k}\right)^2 = \left(\frac{\alpha_0x_0}{p_0}\right)^2.
\end{align*}
Using $\alpha_0 + \alpha_1 + \dots + \alpha_k = 1$ to simplify, this exactly implies $\left(\sum_{i = 0}^{k} \frac{\alpha_i x_i}{p_i}\right)^2 \geq \sum_{i = 0}^{k} \frac{\alpha_i x_i^2}{p_i^2}$ as desired. In fact, $v(p)$ is convex for all $p \gg 0$, including $p \in int(\Delta^k)$.

Moreover, if $\alpha_1, \dots, \alpha_k$ are \emph{strictly} negative, then the equality condition of the Cauchy-Schwartz inequality above requires $\sum_{i = 0}^{k} \frac{\alpha_i x_i}{p_i} = \frac{x_1}{p_1} = \dots = \frac{x_k}{p_k}$, which in turn implies that $x_0, x_1, \dots, x_k$ have the same sign (under the assumption $\alpha_0 > 0 > \alpha_1, \dots, \alpha_k$). Thus, for any direction $(x_0, x_1, \dots, x_k)$ with $x_0 + x_1 + \dots + x_k = 0$, the directional second derivative of $v$ is strictly positive. So $v$ is strictly convex for $p \in int(\Delta^k)$. 

\bigskip

Next, we will show that if $\alpha_1 < 0$ (or any $\alpha_j < 0$), then the function $v(p)$ is \emph{not} concave for $p \in int(\Delta^k)$. For this we again consider the second derivative along the direction $(1, -1, 0, 0, \dots, 0)$, which is $\frac{\alpha_0(\alpha_0-1)}{p_0^2} - \frac{2\alpha_0\alpha_1}{p_0p_1} + \frac{\alpha_1(\alpha_1-1)}{p_1^2}$. As $\alpha_1 < 0$, we have $\alpha_1(\alpha_1-1) > 0$. Thus for $p_0$ close to $1$ and $p_1$ close to $0$, the above second derivative is positive and $v(p)$ is not concave along this direction.  

Finally, we show that if $\alpha_1, \dots, \alpha_k \geq 0$, then the function $v(p)$ is concave. By the Cauchy-Schwartz inequality, 
\begin{align*}
\left(\sum_{i = 0}^{k} \frac{\alpha_i x_i^2}{p_i^2}\right) \cdot \left(\sum_{i = 0}^{k} \alpha_i\right) \geq \left(\sum_{i = 0}^{k} \frac{\alpha_i x_i}{p_i}\right)^2.
\end{align*}
Since $\sum_{i = 0}^{k} \alpha_i = 1$, this implies the RHS of \eqref{eq:v(p) directional second derivative} is non-positive. Hence $v$ has non-positive directional second derivatives and must be globally concave. 

Moreover, if $\alpha_1, \dots, \alpha_k$ are strictly positive, then the equality condition of the Cauchy-Schwartz inequality requires $\frac{x_0}{p_0} = \frac{x_1}{p_1} = \dots = \frac{x_k}{p_k}$, which in turn requires $x_0, x_1, \dots, x_k$ to have the same sign. By the same argument as above, we conclude that in this case $v$ is strictly concave for $p \in int(\Delta^k)$. 
\end{proof}

\section{Proof of a Conjecture Regarding Majorization}
\label{sec:jensen}

\cite{jensen2019asymptotic} studies the majorization order on finitely supported distributions. Given two such distributions $\mu$
and $\nu$, $\mu$ is said to {\em majorize} $\nu$ if for every $n \geq 1$ it holds that the sum of the largest $n$ probabilities in $\mu$ is greater than or equal to the sum of the $n$ largest probabilities in $\nu$. The R\'enyi entropy of a distribution $\mu$ defined on a finite set $S$ is given by
$$
H_\mu(\alpha) = \frac{1}{1-\alpha}\log\left(\sum_{s \in S} \mu(s)^\alpha \right),
$$
for $\alpha \in [0,\infty) \setminus \{1\}$.
As with our definition of R\'enyi divergences, this definition is extended to $\alpha = 1$ by continuity to equal the Shannon entropy, and extended to $\alpha = \infty$ to equal $-\log\max_s \mu(s)$. Hence $H_\mu$ is defined on $[0,\infty]$. 

Note that $H_\mu(0)$ is the size of the support of $\mu$. In his Proposition 3.7, Jensen shows that if $H_\mu(\alpha) < H_\nu(\alpha)$ for all $\alpha \in [0,\infty]$ then the $n$-fold product $\mu^{\times n}$ majorizes $\nu^{\times n}$.

Commenting on his Proposition 3.7, Jensen writes ``The author cautiously conjectures that \dots the requirement of a sharp inequality at $0$ could be replaced by a similar condition regarding the
$\alpha$-R\'enyi entropies for negative $\alpha$.''

To understand this statement in terms of the nomenclature and notation of our paper, we identify each distribution $\mu$ whose support is a finite set $S$ with the experiment $P^\mu = (S,P_1,P_0)$, where $P_1 = \mu$ and $P_0$ is the uniform distribution  on $S$. There is a simple connection between the R\'enyi entropy of $\mu$ and the R\'enyi divergence of $P^\mu$. For $\alpha  \geq 0$,
\begin{equation}\label{eq:entropy and divergence positive alpha}
   H_\mu(\alpha) =  \log |S| - R_P^{1}(\alpha).
\end{equation}
As Jensen suggests, $H_\mu(\alpha)$ for negative $\alpha$ is also important, as it relates to $R_P^0$. For $\alpha \leq 0$,
\begin{equation}\label{eq:entropy and divergence negative alpha}
  H_\mu(\alpha) = \log |S| - \frac{\alpha}{1-\alpha}R_P^0(1-\alpha),
\end{equation}
which extends to $\alpha = -\infty$ to equal $-\log\min_{s} \mu(s)$. Moreover, note that 
\begin{equation}\label{eq:entropy and divergence zero alpha}
H_\mu'(0) = - R_P^0(1) = \log |S| + \frac{1}{|S|} \sum_{s \in S} \log \mu(s). 
\end{equation}

As shown by \citet[p.\ 264]{torgersen1985majorization}, when $\mu$ and $\nu$ have the same support size, then majorization of $\nu$ by $\mu$ is equivalent to Blackwell dominance of $P^\mu$ over $P^\nu$. Thus Jensen's Proposition 3.7, which assumes that the support sizes are different, has no implications for Blackwell dominance. However, our result on Blackwell dominance does have implications for majorization. In particular, the following proposition follows immediately from the application of Theorem~\ref{thm:eventual} to experiments of the form $P^\mu$.
\begin{proposition}
Let $\mu,\nu$ be finitely supported distributions with the same support size (i.e., $H_\mu(0) = H_\nu(0)$), and such that $H_\mu(\infty) \neq H_\nu(\infty)$ and $H_\mu(-\infty) \neq H_\nu(-\infty)$. Then the following are equivalent:
\begin{enumerate}
    \item $H_\mu(\alpha) < H_\nu(\alpha)$ for all $\alpha \in (0, \infty]$, $H_\mu(\alpha) > H_\nu(\alpha)$ for all $\alpha \in [-\infty, 0)$ and $H_\mu'(0) < H_\nu'(0)$.\footnote{This last condition is necessary for majorization, but it was not recognized in the original conjecture of \cite{jensen2019asymptotic}.}
    \item There exists an $n_0$ such that $\mu^{\times n}$ majorizes $\nu^{\times n}$ for every $n \geq n_0$.
\end{enumerate}
\end{proposition}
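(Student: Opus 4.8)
The plan is to transport the whole statement into the language of experiments, where Theorem~\ref{thm:eventual} applies verbatim. Following the identification in the excerpt, I associate to each finitely supported distribution $\mu$ on a set $S$ the experiment $P^\mu = (S,\mu,u_S)$, with $u_S$ the uniform distribution on $S$; since $\mu$ and $\nu$ have equal support size I take $S$ common to both. The key compatibility is with products: because $u_S^{\times n} = u_{S^n}$, one has $(P^\mu)^{\otimes n} = P^{\mu^{\times n}}$, and $\mu^{\times n},\nu^{\times n}$ again have equal support sizes. By Torgersen's equivalence between majorization and Blackwell dominance for distributions of equal support size \citep{torgersen1985majorization}, statement (ii) is thus equivalent to $P^\mu$ dominating $P^\nu$ in the large sample order. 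It then remains to verify the hypotheses of Theorem~\ref{thm:eventual}: both experiments are bounded, since $\dd\mu/\dd u_S = |S|\,\mu(s)$ is bounded above and away from $0$ on the finite support; and the pair is generic precisely because the essential maxima $\log|S| - H_\mu(\infty)$ and $\log|S| - H_\nu(\infty)$ of the log-likelihood ratios differ (this is $H_\mu(\infty)\neq H_\nu(\infty)$), and likewise their essential minima $\log|S| - H_\mu(-\infty)$ and $\log|S| - H_\nu(-\infty)$ differ (this is $H_\mu(-\infty)\neq H_\nu(-\infty)$). Hence Theorem~\ref{thm:eventual} reduces (ii) to the R\'enyi order: $R^\theta_{P^\mu}(t) > R^\theta_{P^\nu}(t)$ for all $\theta\in\{0,1\}$ and $t>0$.

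The remaining and main work is to show that this R\'enyi-order condition is equivalent to (i). Here I would invoke the three identities \eqref{eq:entropy and divergence positive alpha}, \eqref{eq:entropy and divergence negative alpha}, \eqref{eq:entropy and divergence zero alpha} (and their analogues for $\nu$), treating each state and range of $t$ in turn. For $\theta=1$, identity \eqref{eq:entropy and divergence positive alpha} with common $|S|$ gives $R^1_{P^\mu}(t) > R^1_{P^\nu}(t) \iff H_\mu(t) < H_\nu(t)$ for every finite $t>0$, which is (i)(a) on $(0,\infty)$. For $\theta=0$ and $t>1$, writing $\alpha = 1-t < 0$, identity \eqref{eq:entropy and divergence negative alpha} reads $H_\mu(\alpha) = \log|S| + \tfrac{t-1}{t}R^0_{P^\mu}(t)$ with $\tfrac{t-1}{t}>0$, so $R^0_{P^\mu}(t) > R^0_{P^\nu}(t) \iff H_\mu(\alpha) > H_\nu(\alpha)$, matching (i)(b) on $(-\infty,0)$. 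At $t=1$, identity \eqref{eq:entropy and divergence zero alpha} gives $R^0_{P^\mu}(1) > R^0_{P^\nu}(1) \iff H_\mu'(0) < H_\nu'(0)$, which is (i)(c). The one missing case, $\theta=0$ with $t\in(0,1)$, is not independent: the reflection identity \eqref{eq:t-1-t} yields $R^0_{P^\mu}(t) = \tfrac{t}{1-t}R^1_{P^\mu}(1-t)$, and since $\tfrac{t}{1-t}>0$ the state-$0$ comparison on $(0,1)$ follows automatically from the state-$1$ comparison on $(0,1)$ already supplied by (i)(a). Assembling these gives (i)$\Rightarrow$R\'enyi order; for the converse the same identities turn the R\'enyi inequalities into the strict entropy inequalities on the open ranges, while the endpoints $\alpha=\pm\infty$ are recovered by letting $t\to\infty$ (which gives only weak inequalities between the limits $H_\mu(\pm\infty)$ and $H_\nu(\pm\infty)$) and upgrading them to strict using exactly the genericity hypotheses $H_\mu(\pm\infty)\neq H_\nu(\pm\infty)$.

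The main obstacle is the bookkeeping at the two boundary parameters. First, the value at $\alpha=0$ (equivalently the state-$0$ divergence at $t=1$) is a genuinely separate requirement: continuity only forces $R^0_{P^\mu}(1)\ge R^0_{P^\nu}(1)$ from the strict inequalities on either side, so the difference $R^0_{P^\mu}-R^0_{P^\nu}$ could vanish at $t=1$ while being positive nearby, and the derivative condition (i)(c) cannot be dropped. This is exactly the condition overlooked in the original conjecture of \cite{jensen2019asymptotic}, and recognizing its independence is the conceptual crux. Second, one must correctly track the sign of the coefficient multiplying the R\'enyi divergence in the two identities --- it is $-1$ in \eqref{eq:entropy and divergence positive alpha} but $\tfrac{t-1}{t}>0$ in \eqref{eq:entropy and divergence negative alpha} --- since this sign is precisely what produces the flip between the ``$<$'' of (i)(a) and the ``$>$'' of (i)(b); a sign error here would invert the entire characterization. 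Once these two points are handled carefully, the equivalence (i)$\iff$(ii) follows by chaining the reductions above.
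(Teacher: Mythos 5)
Your proposal is correct and takes essentially the same route as the paper's own proof: identify $\mu,\nu$ with the experiments $P^\mu,P^\nu$, use Torgersen's equivalence to translate majorization of the $n$-fold products into large-sample Blackwell dominance, invoke Theorem~\ref{thm:eventual} (checking boundedness and that $H_\mu(\pm\infty)\neq H_\nu(\pm\infty)$ is exactly genericity), and convert the R\'enyi order into the entropy inequalities of (i) via \eqref{eq:entropy and divergence positive alpha}, \eqref{eq:entropy and divergence negative alpha}, \eqref{eq:entropy and divergence zero alpha} and the reflection identity. If anything, you are slightly more explicit than the paper on two details it leaves implicit: the compatibility $(P^\mu)^{\otimes n}=P^{\mu^{\times n}}$, and the upgrading of the weak limiting inequalities at $\alpha=\pm\infty$ to strict ones using the genericity hypotheses.
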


\begin{proof}
For notational ease, let $P$ denote $P^\mu$ and $Q$ denote $P^\nu$. The assumption $H_\mu(\alpha) < H_\nu(\alpha)$ for all $\alpha > 0$ is equivalent, via \eqref{eq:entropy and divergence positive alpha}, to $R_P^1(t) > R_Q^1(t)$ for all $t > 0$, and to $R_P^0(t) > R_Q^0(t)$ for all $t \in (0,1)$, using $R_P^0(t) = \frac{t}{1-t} R_P^1(1-t)$ for $0 < t < 1$.

On the other hand, $H_\mu(\alpha) > H_\nu(\alpha)$ for all $\alpha < 0$ and $H_\mu'(0) < H_\nu'(0)$ is equivalent, via \eqref{eq:entropy and divergence negative alpha} and \eqref{eq:entropy and divergence zero alpha}, to $R_P^0(t) > R_Q^0(t)$ for all $t \geq 1$. So (i) is equivalent to $P$ dominating $Q$ in the R\'enyi order. 

Finally, the assumptions that $H_\mu(\infty) \neq H_\nu(\infty)$ and $H_\mu(-\infty) \neq H_\nu(-\infty)$ translate into $\max_{s} \mu(s) \neq \max_{s} \nu(s)$ and $\min_{s} \mu(s) \neq \min_{s} \nu(s)$, which are in turn equivalent to requiring that $P$ and $Q$ be a generic pair. Therefore, by Theorem~\ref{thm:eventual}, (i) is equivalent to $P^{\otimes n}$ Blackwell dominates $Q^{\otimes n}$ for every large $n$. It follows from \cite{torgersen1985majorization} that (i) is equivalent to (ii).
\end{proof}

\end{document}